\documentclass[11pt,a4paper,reqno]{amsart}
\usepackage{cite}
\usepackage{amsmath,amsthm,amsfonts,amssymb,bbm}
\usepackage{graphicx,psfrag}
\usepackage{hyperref}
\usepackage{enumerate}
\usepackage[utf8]{inputenc}
\usepackage[T1]{fontenc}
\usepackage{stmaryrd}
\usepackage{subcaption}

\addtolength{\textwidth}{1.0cm}
\addtolength{\oddsidemargin}{-1cm}

\numberwithin{equation}{section}

\newcommand{\R}{\mathbb{R}}
\newcommand{\N}{\mathbb{N}}

\newcommand{\Z}{\mathbb{Z}}

\newtheorem{prop}{Proposition}[section]
\newtheorem{theorem}[prop]{Theorem}
\newtheorem{coro}[prop]{Corollary}
\newtheorem{lem}[prop]{Lemma}

\newcommand{\Esp}{\mathbb{E}}
\newcommand{\Var}{\mathrm{Var}}

\newcommand{\eps}{\varepsilon}

\newtheorem{rem}{Remark}

\newtheorem{defi}{Definition}

\title{Hydrodynamic limit for a 2D interlaced particle process}
\author{Vincent Lerouvillois}
\address{Univ Lyon,  Universit\'e Claude Bernard Lyon 1,  UMR 5208, Institut Camille Jordan,  F-69622 Villeurbanne cedex, France} \email{lerouvillois@math.univ-lyon1.fr}
\author{Fabio Lucio Toninelli}
\address{Technical University of Vienna, Institut f\"ur Stochastik und Wirtschaftsmathematik, Wiedner Hauptstra{\ss}e 8-10, A-1040 Vienna, Austria} \email{fabio.toninelli@tuwien.ac.at}

\begin{document}

\begin{abstract}
	The Markov dynamics of interlaced particle arrays, introduced by
	A. Borodin and P. Ferrari in \cite{BF}, is a classical example of
	$(2+1)$-dimensional random growth model belonging to the so-called
	{\sl Anisotropic KPZ} universality class.  In
	\cite{legras2017hydrodynamic}, a hydrodynamic limit -- the
	convergence of the height profile, after space/time rescaling, to
	the solution of a deterministic Hamilton-Jacobi PDE with non-convex
	Hamiltonian -- was proven when either the initial profile is convex,
	or for small times, before the solution develops shocks. In the
	present work, we give a simpler proof, that works {\sl for all
		times and for all initial profiles for which the limit equation
		makes sense}. In particular, the convexity assumption is
	dropped. The main new idea is a  new viewpoint about "finite speed of 
	propagation" that allows to bypass the need
	of a-priori control of the interface gradients, or equivalently of
	inter-particle distances.
\end{abstract}

\maketitle



\section{Introduction}

In this work, we study a $(2+1)$-dimensional stochastic growth model,
or equivalently an irreversible Markov process for a two-dimensional
system of interlaced particles which perform totally
asymmetric, unbounded jumps. This model was originally introduced by
A. Borodin and P. Ferrari in \cite{BF} (we will refer to it as Borodin-Ferrari dynamics) together with a larger class of
growth models that belong to the so-called Anisotropic KPZ
universality class \cite{wolf1991kinetic}; we refer to
\cite{legras2017hydrodynamic,MR3966870,BF} for a discussion of this topic
and for further references. Our focus here is not on interface
fluctuations but on the hydrodynamic limit (i.e. the law of large
numbers for the height profile $H(x,t)$). For models in the AKPZ
class, the rescaled height profile is conjectured to converge to the
viscosity solution of a non-linear Hamilton-Jacobi PDE
\cite{barles2013introduction}
\begin{eqnarray}
  \label{eq:HJPDE}
  \partial_t u+v(\nabla u)=0,
\end{eqnarray}
with non-convex Hamiltonian $v$. In fact, the feature that
distinguishes the AKPZ class from the usual KPZ class is that the
Hessian $D^2 v$ has negative determinant \cite{wolf1991kinetic}.  The absence of
convexity has an important consequence on the hydrodynamic
behavior. First of all, there is no Hopf-Lax formula for the solution
of \eqref{eq:HJPDE}. Moreover, there is no hope that the subadditivity
arguments developed in  \cite{MR1797390,MR1921440} apply, since they would
automatically yield a convex Hamiltonian. Let us recall that
the methods of \cite{MR1797390,MR1921440} require (at the microscopic level)
the so-called ``envelope property'', which is a strong version of monotonicity and which gives a microscopic analog of
Hopf-Lax formula. For AKPZ models, the envelope property simply fails
to hold.

In the previous work \cite{legras2017hydrodynamic}, the convergence
(in probability) of the height profile to the viscosity solution of
\eqref{eq:HJPDE} was proven, under the important restriction that
either the initial profile $u_0$ is convex, or that time is smaller
than the time when shocks (discontinuities of $\nabla u$) appear. In
the present article, instead, we prove the result \emph{for all
  initial profiles and for all times} (and convergence holds almost
surely).  Our proof is strongly inspired by the method developed by
F. Rezakhanlou in \cite{rezakhanlou2001continuum}; in few words, it
consists in showing that the Markov semigroup that encodes the
dynamics is tight in a certain topology and that all of its limit
points satisfy a set of properties that are sufficient to identify
them with the unique semi-group associated to the PDE
\eqref{eq:HJPDE}. These ideas have been recently employed in
\cite{zhang2018domino,lerouvillois2019hydrodynamic} to obtain a full
hydrodynamic limit for other $(2+1)$-dimensional growth models in the
AKPZ class, namely the ``domino shuffling algorithm'' and the
Gates-Westcott model \cite{MR1478060}. The reason why Rezakhanlou's
method was not employed in \cite{legras2017hydrodynamic} is that it
requires a strong (i.e. uniform with respect to the initial condition)
form of locality or of ``finite speed of propagation'' for the
dynamics. Such property is easy to check for the ``domino shuffling''
or the Gates-Westcott dynamic, but it fails for the Borodin-Ferrari
dynamics. The reason for this is that particle jumps are not bounded;
the larger the typical inter-particle distance in the initial
condition, the faster information propagates through the system.

The main new idea of the present work, that allows to overcome the
limitations of \cite{legras2017hydrodynamic}, is the following.  The
usual locality property would say that the height function $H(x,t)$,
for fixed $x$ and $t\le T$, is with high probability not influenced
(for $T$ large and \emph{uniformly in the initial condition}) by the
value of the initial profile $H(\cdot,0)$ outside a ball of radius
$O(T)$ centered at $x$. This fails for the Borodin-Ferrari dynamics:
locality holds but not uniformly, due to the lack of any a-priori
control of typical inter-particle distances at times $t>0$.  In
contrast, our Proposition \ref{prop:dlocalite} shows that the height
$H(x,t)$ is entirely determined by the height $H$ on a certain
deterministic, compact subset of space-time, depending on $(x,t)$ but
independent of the initial condition.  {As explained in Section
  \ref{sec:locality}, underlying this property is a bijection between the
  Borodin-Ferrari dynamics and a discretized version of the
  Gates-Westcott growth model.} This bijection is the analogue of the well-known mapping between the Hammersley process and the Polynuclear growh (PNG) growth model \cite{MR2249629}.

As a side remark, with respect to the method of \cite{rezakhanlou2001continuum}, we avoid studying directly the convergence of the Markov semi-group; this streamlines somewhat the argument.

{\bf Organization of the article.} This work is organized as follows.
The model and its height function are defined in Section \ref{sec:BF}
and \ref{sec:height}, while the main theorem is given in Section
\ref{sec:maintheo}. Section \ref{sec:properties} proves a few general
properties of the process, including the new locality statement.
Compactness of sequences of rescaled height profiles is proven in
Section \ref{sec:compactness} and the identification of limit points
with the solution of the PDE is obtained in Section
\ref{sec:identification}.

\section{The model and the main result}

\subsection{The Borodin-Ferrari dynamics}
\label{sec:BF}
We start by recalling the definition of the Borodin-Ferrari
dynamics, as a Markov evolution of a
two-dimensional array of interlaced particles. In the original
reference \cite{BF}, particles perform  jumps of
length 1 to the right and can ``push'' a number $n\ge 0$ of other particles; we
rather follow the equivalent representation used in
\cite{Toninelli2+1,legras2017hydrodynamic}, where particles jump a distance
$k\ge 1$ to the left, and no particles are pushed.  The lattice where
particles live consists of an infinite collection of discrete horizontal lines,
labeled by an index $\ell\in \mathbb Z$. Each line contains an
infinite collection of particles, each with a label $(p,\ell)$,
$p\in \mathbb Z$. See Figure \ref{fig:1}. Horizontal particle
positions $z_{(p,\ell)}$ are discrete:
\[
z_{(p,\ell)}\in\mathbb Z+(\ell\!\!\!\!\mod 2)/2
\] (note that adjacent horizontal lines are displaced by a half interger).  

\begin{defi}
  \label{def:omega}We let $\Omega$ be the set of  particle configurations $\eta$ satisfying the following properties:
\begin{enumerate}
\item no two particles in the same line $\ell$ share the same position
  $z_{(p,\ell)}$. We label particles in each line in such a
  way that $z_{(p,\ell)}<z_{(p+1,\ell)}$. Labels are
  attached to particles, and they do not change along the dynamics.
\item particles are interlaced: for every
  $\ell$ and $p$, there exists a unique $p'\in \mathbb Z$ such that
  $z_{(p,\ell)}<z_{(p',\ell+1)}<z_{(p+1,\ell)}$ (and, as a
  consequence, also a unique $p''\in \mathbb Z$ such that
  $z_{(p,\ell)}<z_{(p'',\ell-1)}<z_{(p+1,\ell)}$). Without loss of
  generality, we  assume that $p'=p$ (and therefore
  $p''=p+1$). This can always be achieved by deciding which particle
  is labeled $0$ on each line.  Also, by convention, we establish that the particle labeled $(0,0)$ is the left-most one on line $\ell=0$, with non-negative  horizontal coordinate.
\item for  $\ell=0$ (and therefore for every $\ell$, because of the interlacement condition) one has
\begin{eqnarray}
  \label{eq:1}
 \lim_{p\to -\infty} \frac{z_{(p,\ell)}}{p^2}=0.
\end{eqnarray}
\end{enumerate}
\end{defi}



Here we give an informal description of the dynamics; a rigorous
(graphical) definition was given in
\cite[Sec. 2.3]{legras2017hydrodynamic}. We will not recall the
details of this construction here and we will use it only implicitly:
we will work with the informal definition of the dynamics, and the
existence of the graphical construction guarantees that the arguments
are actually rigorous.

Given the particle label $(p,\ell)$, we denote $I_{(p,\ell)}=\{(p-1,\ell+1),(p,\ell-1)\}$, see Fig. \ref{fig:1}: 
these are  the labels of the two particles directly to the left of $(p,\ell)$
on lines $\ell+1$ and $\ell-1$.
\begin{figure}[htbp!]
\includegraphics[width=9cm]{./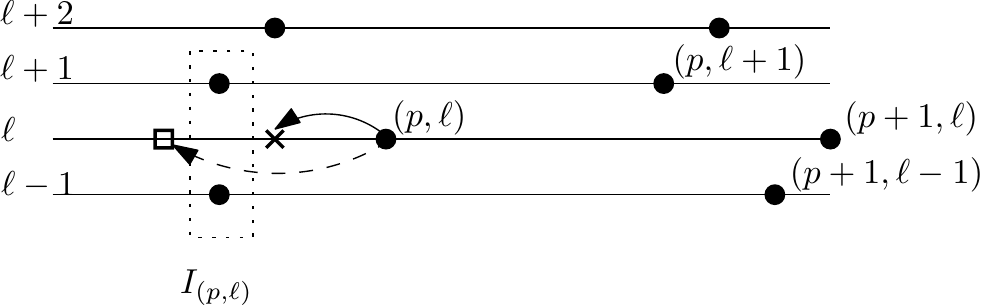}
  \caption{The set $I_{(p,\ell)}$ comprises the particle labels of the two particles in the dotted region. The particle $(p,\ell)$ can jump  to the position marked by the cross (and does so with rate $1$) but cannot jump to the position marked by a square.  }
\label{fig:1}
\end{figure}

  To every pair $(\ell,z)$ with
$\ell\in\mathbb Z$ and $z\in \mathbb Z+(\ell\!\!\mod 2)/2$ we
associate an i.i.d. Poisson clock of rate $1$ (we will denote $W$ the realization of all the Poisson processes).   When the clock labeled
$(\ell,z)$ rings, then:
\begin{itemize}
\item if position $(\ell,z)$ is occupied, i.e. if there is a particle
  on line $\ell$ with horizontal position $z$, then nothing happens;
\item if position $(\ell,z)$ is empty, let $(p,\ell)$ denote the label
  of the left-most particle on line $\ell$, with $z_{(p,\ell)}>z$. If
  both particles with label in $I_{(p,\ell)}$ have horizontal position smaller
  than $z$, then particle $(p,\ell)$ is moved to position $(\ell,z)$;
  otherwise, nothing happens.
\end{itemize}
In words: is position $(\ell,z)$ is empty, then particle $(p,\ell)$ is moved to position  $(\ell,z)$ if and only if the new configuration is still in $\Omega$, i.e. if the interlacement constraints are still satisfied. Despite the fact that particle jumps are unbounded, the dynamics is well defined for almost every realization $W$ of the Poisson clocks, thanks to the
 condition
\eqref{eq:1} on particle spacings. A proof via the graphical representation is given in \cite{legras2017hydrodynamic}.

As observed in \cite[Remark 2.2]{legras2017hydrodynamic}, the
evolution of the particles on each line $\ell $ follows the
one-dimensional (discrete) Hammersley-Aldous-Diaconis (HAD) process
\cite{FerrariMartin}, except that particle jumps can be prevented by
the interlacing constraints with particles in lines $\ell\pm1$. This
induces very strong correlations between the processes on different
lines.  As proven in \cite{Toninelli2+1}, the translation invariant,
stationary measures of the Borodin-Ferrari dynamics correspond (via
the tiling-to-interlacing particle bijection recalled in Section
\ref{sec:loz}) to the the translation invariant, ergodic Gibbs
measures of rhombus tilings of the triangular lattice.  In particular,
the restriction of the stationary measures to any line $\ell$ is very
different from the (i.i.d. Bernoulli) invariant measures of the HAD
process: while for the latter the particle occupation variables are
i.i.d., for the former they have power-law correlations.

\subsection{Height function}
\label{sec:height}
To each configuration $\eta\in\Omega$ we associate an integer-valued
height function $h_\eta$. The relation with the height function of rhombus tilings of the plane is recalled in Section \ref{sec:loz}.

The graph $G$ whose vertices are  all the possible particle positions $(\ell,z),\ell\in\mathbb Z, z\in \mathbb Z+(\ell\!\!\mod 2)/2$ and where the neighbors of $(\ell,z)$ are 
the four vertices $(\ell\pm1,z\pm1/2)$ can be identified with $\mathbb Z^2$, rotated by $\pi/4$ and suitably rescaled, see Figure \ref{fig:2}.
The height function $h_\eta$ is defined on the dual graph\footnote{with a minor 
abuse of notation, we will often write $x\in \mathbb Z^2$ instead of $x\in G^*$}
$G^*$, obtained by shifting $G$ horizontally by $1/2$, see Figure
\ref{fig:2}.
\begin{figure}[htbp!]
	\includegraphics[width=11cm]{./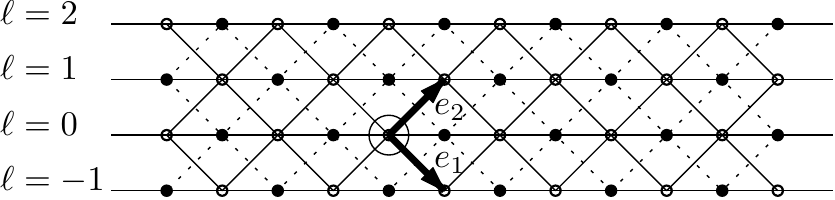}
	\caption{The lattice $G$ (dotted, with black vertices that are
		possible particle positions $(\ell,z)$, with
		$z\in\mathbb Z+(\ell\!\!\mod 2)/2$), the lattice $G^*$ (bold, with
		white vertices) and the two coordinate vectors $e_1,e_2$. The origin
		of $G^*$ (encircled) is the point of horizontal coordinate $z=-1/2$
		on line $\ell=0$.}
	\label{fig:2}
\end{figure}

We make the following choice
of coordinates on $G^*$:
\begin{defi}[Coordinates on $G^*$]
\label{def:coord}
 The point of $G^*$ of horizontal coordinate $-1/2$ of the line
labeled $\ell=0$ is assigned the coordinates $(x_1,x_2)=(0,0)$. The unit vector
$e_1$ (resp. $e_2$) is the vector from $(0,0)$ to the point of horizontal coordinate $0$ on the line labeled $\ell=-1$
(resp. $\ell=+1$), see Figure \ref{fig:2}. With this convention, the vertex of $G^*$ labeled $(x_1,x_2)$ is on line 
\begin{equation}
  \label{eq:elle}
\bar\ell(x)=x_2-x_1  
\end{equation}
 and has
horizontal coordinate 
\begin{equation}
  \label{eq:zeta}
\bar z(x)=(x_1+x_2-1)/2.   
\end{equation}
\end{defi}
We can now define the height function $h_\eta$:
\begin{defi}[Height function]
\label{def:height}
Given a configuration $\eta\in\Omega$, its height function $h_\eta$ is
an integer-valued function defined on $G^*$.  We fix  $h_\eta(0,0)$ 
 to  any constant (for instance, zero). The  gradients
$h_\eta(x_1+1,x_2)-h_\eta(x_1,x_2)$ and
$h_\eta(x_2,x_2+1)-h_\eta(x_1,x_2)$ are defined as follows.
Given $(x_1,x_2)\in G^*$, let $p$ (resp. $p+1$) be the index of the rightmost
(resp. leftmost) particle on line $\ell=x_2-x_1$ that is to the left
(resp. to the right) of $(x_1,x_2)$. Recall that particle $p$ of line
$\ell+1$ satisfies $z_{p,\ell}<z_{p,\ell+1}<z_{p+1,\ell}$. We
establish that
\begin{multline}
  \label{eq:height}
\Delta_2 h_\eta(x_1,x_2):= h_\eta(x_1,x_2+1)-h_\eta(x_1,x_2)\\=\left\{
    \begin{array}{ccc}
      0 & \text{if} & (x_1,x_2+1) \text{ is to the right of particle } (p,\ell+1)\\
      1 & \text{if} & (x_1,x_2+1) \text{ is to the left of particle } (p,\ell+1)
    \end{array}
\right.
\end{multline}
and similarly 
\begin{multline}
  \label{eq:height2}
\Delta_1 h_\eta(x_1,x_2):=  h_\eta(x_1+1,x_2)-h_\eta(x_1,x_2)\\=\left\{
    \begin{array}{ccc}
      0 & \text{if} & (x_1+1,x_2) \text{ is to the right of particle } (p+1,\ell-1)\\
      1 & \text{if} & (x_1+1,x_2) \text{ is to the left of particle } (p+1,\ell-1).
    \end{array}
\right.
\end{multline}

\end{defi}
See Figure \ref{fig:3}.

It is immediately checked that\begin{multline}
  \label{eq:grad2}
\Delta_1 h_\eta(x_1,x_2)+\Delta_2 h_\eta(x_1+1,x_2)=\Delta_2 h_\eta(x_1,x_2)+\Delta_1 h_\eta(x_1,x_2+1)\\
=h_\eta(x_1+1,x_2+1)-h_\eta(x_1,x_2)\\=\left\{
    \begin{array}{ll}
      0 & \text{if $\exists$    particle between } (x_1,x_2) \text{ and } (x_1+1,x_2+1)\\
      1 & \text{if $\nexists$    particle between } (x_1,x_2) \text{ and } (x_1+1,x_2+1).
    \end{array}
\right.
\end{multline}
The first equality in \eqref{eq:grad2} implies that the sum of gradients of 
$h_\eta$ along any closed circuit is zero, so that the definition of $h_\eta$ 
is well-posed.
\begin{figure}[htbp!]
	\includegraphics[width=11cm]{./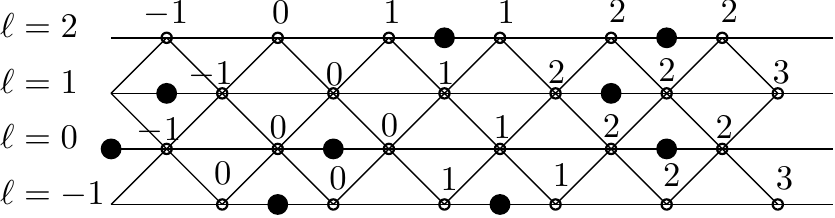}
	\caption{A portion of particle configuration and the corresponding height 
	function. Particles (black dots) are on vertices of $G$ and the height is 
	defined on vertices of $G^*$.}
	\label{fig:3}
\end{figure}

Let $\Gamma$ be the set of admissible height functions:
\begin{eqnarray}
  \label{eq:Gamma}
\Gamma := \{ h_{\eta}, \: \eta \in \Omega\},
\end{eqnarray}
where it is understood that we allow the value $h_\eta(0)$ to be any integer. 
 We also recall from \cite{legras2017hydrodynamic} the definition 
\begin{equation}
\Omega_M := \{ \eta \in \Omega, \: z_{(p,\ell)}-z_{(p-1,\ell)} \leq M   \text{ for every } \ell,p  \}
\end{equation}
and we introduce the corresponding set of height functions
\begin{equation}
\Gamma_M:= \{ h_{\eta}, \: \eta \in \Omega_M\}.
\end{equation}

Given an initial height function $ h \in \Gamma$, the height 
at time $t$ is defined as
\begin{eqnarray}
  \label{eq:Hxt}
  H(x,t)=h(x)-J_x(t),
\end{eqnarray}
where $J_x(t)\ge0$ denotes the number of particles, on the line 
labelled
$\bar \ell(x)=x_2-x_1$, that crossed (from right to left) the horizontal 
position $\bar z(x)=(x_1+x_2-1)/2$
in the time interval $[0,t]$.

  To emphasize the dependence of $H$
on the initial height configuration $h$ and on the
realization $W$ of the Poisson Point Process of intensity $1$ on
$G \times \R_+$, we write more explicitly
\[ H(x,t;h,W).\]

\subsubsection{Height function and mapping to rhombus tilings}
\label{sec:loz}

For a better intuition on the  height function,
let us recall that there is a bijection between
interlaced particle configurations satisfying properties (1)-(2)
of Definition \ref{def:omega} and rhombus tilings of the plane, as in
Figure \ref{fig:4}. 
\begin{figure}[htbp!]
\includegraphics[width=10cm]{./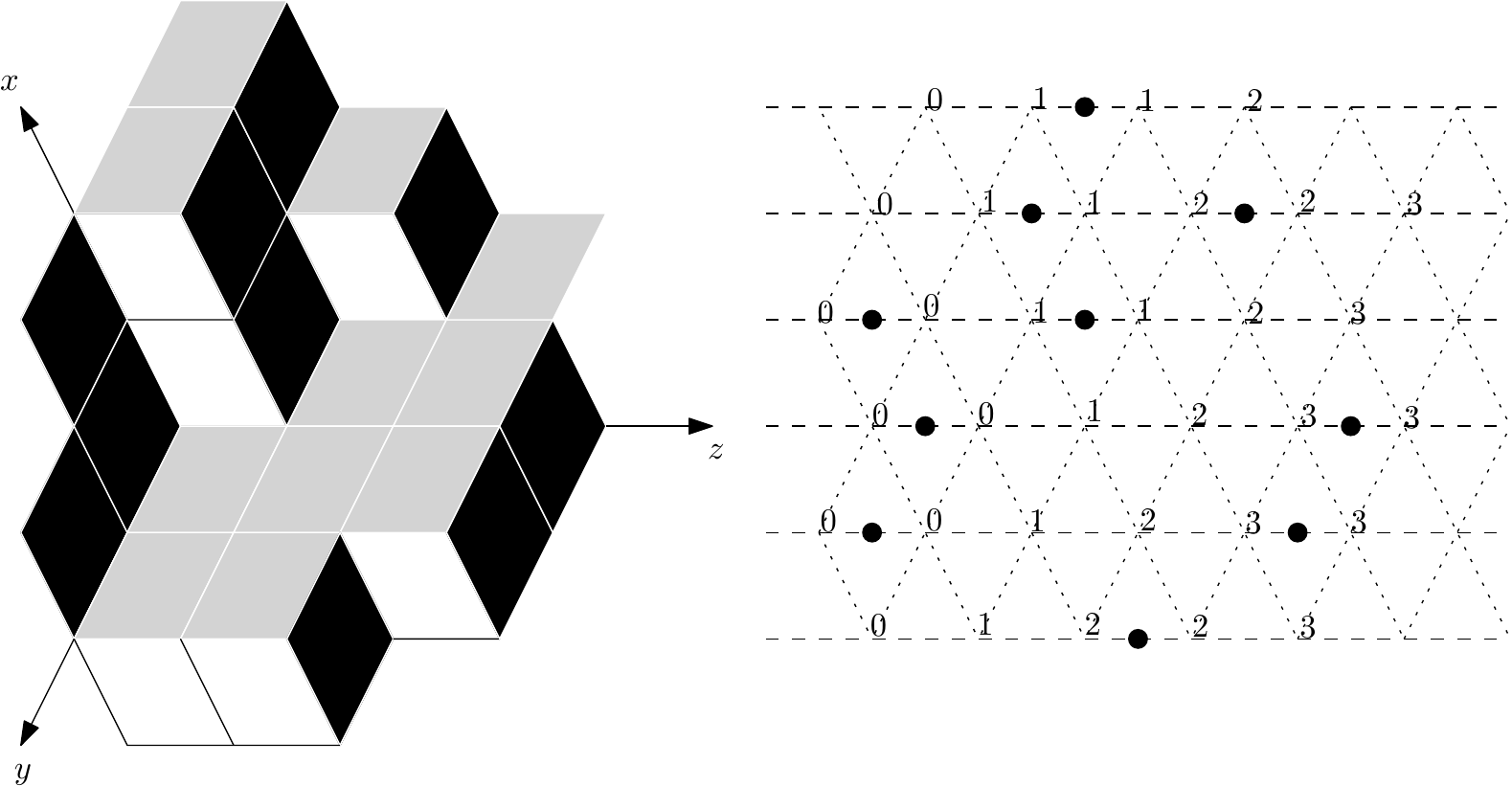}
\caption{The bijection between rhombus tiling (or stepped interface)
  and particle configuration: vertical (black) rhombi correspond to
  particles (dots). It is easily checked that the height function in
  the right figure corresponds to the height in the $z$ direction, that is  
  w.r.t. the $(x,y)$ plane,
  of the pile of cubes in the left figure. }
\label{fig:4}
\end{figure}\\
Particles correspond to vertical rhombi: the vertical coordinate of
the centre of a vertical rhombus defines the line the particle
is on, and its horizontal coordinate corresponds to the $z_{(p,\ell)}$
coordinate of the particle. If lengths are rescaled in such a way that
rhombi have sides of length $1$, then horizontal positions 
are shifted by half-integers between neighboring lines (as is the case
for particles). It is well known (and easy to understand from the
picture) that horizontal positions of rhombi in neighboring lines
satisfy the same interlacing conditions as particle positions $z_{(p,\ell)}$,
and  that the tiling-to-particle configuration mapping is a
bijection.

Given a rhombus tiling as in Figure \ref{fig:4} and viewing it as the
boundary of a stacking of unit cubes in $\mathbb R^3$, a natural
definition of height function is to assign to each vertex of a rhombus
the height (i.e. the $z$ coordinate) w.r.t. the $(x,y)$ plane of the
point (in $\mathbb R^3$) in the corresponding unit cube. As a
consequence, height is integer-valued and defined on points that are
horizontally shifted $1/2$ w.r.t. centers of rhombi, i.e. on points of
$G^*$.  The height function defined in the previous section
equals  the height of the stack of cubes w.r.t. the
$(x,y)$ plane.

\subsection{Slopes and speed}

Here we define the set of continuous height functions that are
possible scaling limits of the discrete height profile $H$, and the
speed of growth function $v$ (or ``Hamiltonian'') that appears in the limit PDE. Given an integer $M$, define
$\bar{\Gamma}_M$ to be the set of functions $f : \R^2 \rightarrow \R$ that are
non-decreasing in both coordinates and such that
\[
\frac{f(x+\lambda(1,1))-f(x)}{\lambda} \leq 1-\frac{1}{M} \text{ for every 
} 
x\in 
\R^2, \, \lambda \in \R.\]
 If $f$ is differentiable, this means that  $\nabla f$ belongs to the triangle 
$\mathbb{T}_M$ defined by
 \begin{eqnarray}
\label{eq:T_M}
\mathbb T_M:=\{\rho\in \R^2: \rho_1,\rho_2 \geq 0, \, \rho_1+\rho_2\leq 
1-M^{-1}\}.
\end{eqnarray}
Define also
\begin{equation}
\bar{\Gamma}:= \bigcup_{M \in \N} \bar{\Gamma}_M \quad \text{and} \quad  
\mathbb T:= 
\{\rho\in \R^2: \rho_1,\rho_2 \geq 0, \, 
\rho_1+\rho_2<1\}=\bigcup_{M\in \N} \mathbb T_M.
\end{equation}

We define the speed function $v(\cdot)$ as follows:
\begin{equation}\label{eq:vitesse}
v(\rho) =  \frac{1}{\pi} \frac{\sin( \pi \rho_1) \sin(\pi 
  \rho_2)}{\sin(\pi(\rho_1+\rho_2))}\ge0, \quad \rho\in \mathbb T
    \end{equation}
    with the convention that $v(0)=0$.
    Note that $v(\cdot)$ vanishes if $\rho\in \mathbb T$ with 
    $\min(\rho_1,\rho_2)=0$.
    Also, $v(\cdot)$ tends  to $+\infty$ if $\rho\to(\bar\rho_1,1-\bar \rho_1)$ 
    with $ \bar\rho_1\in (0,1)$. On the other hand, $v(\cdot)$ does not admit a 
    unique limit for $\rho\to(0,1)$ or $\rho\to(1,0)$ (any value in $[0,+\infty]$ can be obtained as limit point).
    
    \begin{rem}
    \label{rem:nablavrho}
    The speed function $v$ is non-decreasing in both coordinates and
    is Lipschitz on every $\mathbb{T}_M$ since
    $ \nabla v(\rho) = \left(\frac{\sin^2(\pi \rho_2)}{\sin^2(\pi
        (\rho_1+\rho_2))},\frac{\sin^2(\pi \rho_1)}{\sin^2(\pi
        (\rho_1+\rho_2))} \right)$ is bounded on $\mathbb T_M$.  One
    can also check that the determinant of the Hessian of $v$ is
    negative (strictly negative in the interior of $\mathbb T$) and
    thus the model belongs to the AKPZ universality class
    \cite{wolf1991kinetic}.
    \end{rem}

  \subsection{The hydrodynamic limit}
\label{sec:maintheo}
  
\begin{theorem}\label{theo:principalLJ}
  Given  an integer
  $M$, let $f\in \bar{\Gamma}_M$ and let $(h_L)_{L \in \N} \in 
  \Gamma_{M}^{\N}$ be a sequence of height functions approaching $f $ in the 
  following sense:
\begin{equation}\label{eq:hypcondinitialLJ}
\forall R>0 \qquad \sup_{\|x\|\leq R} \left| \frac{1}{L} h_L(\lfloor Lx \rfloor) - f(x) \right| \underset{L \to \infty}{\longrightarrow} 0.
\end{equation}
  Then, for almost every realization $W$, the following hydrodynamic limit holds:
\begin{equation}\label{eq:limitehydroGW}
\forall T>0 \quad \forall R>0 \qquad  \sup_{\|x\| \leq R, t \in [0,T]} 
\left| \frac{1}{L} H(\lfloor Lx \rfloor , Lt;h_L,W) - u(x,t) 
\right| \underset{L \to \infty}{\longrightarrow} 0 \ ,
\end{equation}
where $u$ is the unique viscosity solution of the Hamilton-Jacobi equation:
\begin{equation}\label{eq:visc}
\left\{
\begin{aligned}
 \partial_t u + {v}(\nabla u) & = 0 \\
 u(\cdot,0) & = f.
\end{aligned}
\right.
\end{equation}
\end{theorem}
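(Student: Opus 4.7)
The plan is to follow the general scheme developed by F. Rezakhanlou in \cite{rezakhanlou2001continuum} and adapted to AKPZ growth models in \cite{zhang2018domino,lerouvillois2019hydrodynamic}: extract subsequential limits of the rescaled height function, show that every such limit is a viscosity solution of \eqref{eq:visc}, and invoke uniqueness on $\bar\Gamma_M$ to upgrade this to convergence of the full sequence. Two crucial inputs will be Proposition \ref{prop:dlocalite} (the new locality statement) and the known hydrodynamic limit for translation-invariant, stationary initial data of slope $\rho\in \mathbb T_M$, for which the average current across a horizontal line is exactly $v(\rho)$ (this comes from the stationary measure analysis of \cite{Toninelli2+1,BF}).

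For the compactness step (Section \ref{sec:compactness}), I would combine the spatial slope constraint inherited from $h_L\in\Gamma_M$ with the monotonicity of $t\mapsto H(x,t)$ and a uniform pointwise upper bound on $J_x(Lt)/L$, obtained from a Poisson large-deviation estimate on the density of clock rings in a bounded space-time region, to establish equicontinuity in both space and time of $\tfrac{1}{L}H(\lfloor L\cdot \rfloor,L\cdot\,;h_L,W)$ on compact sets. Arzel\`a--Ascoli then yields, for a.e.\ realization $W$, relative compactness in $C_{\mathrm{loc}}(\R^2\times[0,\infty))$. Any subsequential limit $u$ then satisfies $u(\cdot,0)=f$ and, by passing the slope constraint to the limit, $u(\cdot,t)\in \bar\Gamma_M$ for every $t\ge 0$.

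The identification step (Section \ref{sec:identification}) is the heart of the proof. Fix a subsequential limit $u$ and a smooth test function $\phi$ touching $u$ from above at $(x_0,t_0)$ with $t_0>0$; set $\rho=\nabla\phi(x_0,t_0)\in \mathbb T_M$ and $c=\partial_t\phi(x_0,t_0)$. The aim is to show $c+v(\rho)\le 0$. By Proposition \ref{prop:dlocalite}, $H(\lfloor Lx_0\rfloor,Lt;h_L,W)$ for $t$ slightly larger than $t_0$ depends only on the initial data in a deterministic compact region (rescaled by $L$) that is independent of the global initial condition. Replacing $h_L$ inside this region by the lattice discretization of the affine profile of slope $\rho$ tangent to $\phi$ at $(x_0,t_0)$, and using monotonicity of the dynamics w.r.t.\ the initial height (proved in \cite{legras2017hydrodynamic}), I bound $\tfrac{1}{L}H(\lfloor Lx_0\rfloor,Lt)$ from above by the evolution of a translation-invariant planar initial condition of slope $\rho$. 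The planar-initial-data hydrodynamic limit gives that this upper bound equals $\phi(x_0,t_0)-(t-t_0)v(\rho)+o(t-t_0)$ as $L\to\infty$ and then $t\downarrow t_0$. Combined with $u\le\phi$ in a neighborhood of $(x_0,t_0)$ and taking $t\downarrow t_0$, this yields $c+v(\rho)\le 0$. The symmetric argument (test function touching $u$ from below, monotonicity used in the opposite direction) gives the supersolution inequality. Hence $u$ is both a viscosity sub- and super-solution of \eqref{eq:visc}. Since $v$ is Lipschitz on $\mathbb T_M$ (Remark \ref{rem:nablavrho}) and $u(\cdot,t)\in \bar\Gamma_M$, the standard comparison principle of \cite{barles2013introduction} applies and identifies $u$ with the unique viscosity solution, which finishes the proof.

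I expect the main obstacle to be precisely the local comparison inside the identification step: transferring the planar hydrodynamic limit into an upper (and lower) bound on $H$ near $(Lx_0,Lt_0)$ started from the given $h_L$. For the Borodin--Ferrari dynamics this comparison is the step that previously forced the convexity/small-time restriction in \cite{legras2017hydrodynamic}, because unbounded jumps prevent any uniform finite-speed-of-propagation statement: a particle arbitrarily far from $x_0$ could in principle affect $H(\lfloor Lx_0\rfloor,Lt_0)$ through a long chain of interlacing obstructions. Proposition \ref{prop:dlocalite} is exactly what removes this obstruction, by producing a deterministic compact dependency set independent of the initial data. A secondary, milder difficulty is the degeneracy of $v$ at the boundary of $\mathbb T$; this is handled by the hypothesis $f\in \bar\Gamma_M$ and the fact that the slope constraint $\nabla u\in \mathbb T_M$ is preserved by the comparison above, keeping us in the Lipschitz regime of $v$ throughout the argument.
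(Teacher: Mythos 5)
Your overall scheme (compactness, identification of limit points as viscosity solutions, uniqueness) matches the paper's, and you correctly identify Proposition \ref{prop:dlocalite} as the crucial new ingredient. However, the identification step as you describe it has a genuine gap, and it is precisely the place where the actual argument differs most from the naive one.

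First, Proposition \ref{prop:dlocalite} is not a ``finite speed of propagation from time $0$'' statement. It does not assert that $H(x,t)$ is a function of the initial data on a bounded region; rather it compares two evolutions (with the same Poisson clocks and different initial data) on the \emph{slanted} space-time set $E_{x,t,\ell}$, whose $y$-slice sits at time $t-\ell-\lfloor (y_1+y_2)/2\rfloor$. Replacing the initial condition near $Lx_0$ at time $0$ and invoking monotonicity would require a finite-speed-of-propagation property from time $0$ to time $Lt_0$, which is exactly the $M$-dependent, non-uniform statement the paper is trying to avoid (Corollary \ref{coro:finitespeed} and the discussion following it). The whole point of Proposition \ref{prop:dlocalite} is that the comparison is made on a \emph{space-time} boundary of a fixed shape, independent of $M$.

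Second, because the comparison set is slanted in space-time, the slope $\rho$ of the affine comparator cannot be taken equal to $\nabla\varphi(x_0,t_0)$, contrary to what you propose. An affine initial datum $f_\rho+c$ evolves as $f_\rho+c-v(\rho)\,s$ (Proposition \ref{prop:proline}). Restricting this to the set $\bar E_{x_0,t_0,\delta}$, where $s=t_0-\delta-(y_1+y_2)/2$, and matching it with the linearisation $\psi$ of $\varphi$ (also restricted to that set) forces the system
\begin{equation*}
\rho_1+\tfrac12 v(\rho)=\partial_{x_1}\varphi(x_0,t_0)-\tfrac12\partial_t\varphi(x_0,t_0),
\qquad
\rho_2+\tfrac12 v(\rho)=\partial_{x_2}\varphi(x_0,t_0)-\tfrac12\partial_t\varphi(x_0,t_0),
\end{equation*}
which is the content of Lemma \ref{lem:chercherho}. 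The resulting $\rho$ differs from $\nabla\varphi(x_0,t_0)$ whenever $v(\rho)\neq -\partial_t\varphi(x_0,t_0)$, i.e.\ generically. With your choice $\rho=\nabla\varphi(x_0,t_0)$, the affine evolution $f_\rho+c-v(\rho)s$ would not dominate $\psi$ on $\bar E_{x_0,t_0,\delta}$, so the hypothesis of Proposition \ref{prop:dlocalite} would fail and no conclusion could be drawn. Part of the reason this subtlety is not an obstruction is that the map $\rho_h\mapsto\rho_h+\hat v(\rho_h,\rho_v)$ is a strict bijection from $[\lvert\rho_v\rvert,1)$ onto $[\lvert\rho_v\rvert,\infty)$ (cf.\ Remark \ref{rem:nablavrho}), which guarantees solvability; this is a model-specific fact that you do not mention.

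Finally, once one obtains $\partial_t\varphi(x_0,t_0)+v(\rho)\le 0$ for \emph{this} $\rho$, an extra step is still needed: one has to translate this into $\partial_t\varphi(x_0,t_0)+v(\nabla\varphi(x_0,t_0))\le 0$. The paper does this using the monotonicity of $v$ along the direction $\rho_1+\rho_2$ at fixed $\rho_1-\rho_2$ (again Remark \ref{rem:nablavrho}). Your proposal omits this conversion entirely, because it implicitly (and incorrectly) sets $\rho=\nabla\varphi(x_0,t_0)$ from the start.

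The compactness part of your plan and the use of stationary measures to compute the linear-profile hydrodynamic limit are in line with the paper (the paper additionally needs the variance bound of \cite{MR4033679} to get an almost-sure, rather than in-probability, statement via Borel--Cantelli, which is worth flagging). But the identification step as you wrote it would not go through without the slanted-set matching of Lemma \ref{lem:chercherho} and the subsequent $v$-monotonicity argument.
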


\begin{rem}
  The condition $h_L \in \Gamma_{M}$ for some integer $M$ makes sure
  that the dynamics is well defined and satisfies the "finite speed of
  propagation" property (see Proposition \ref{prop:finitespeed}); this
  condition could be somewhat weakened. On the other hand, the
  requirement that $f \in \bar{\Gamma}_{M_0}$ for some integer $M_0$
  (we take for simplicity $M_0=M$) ensures that the slopes remain
  uniformly away from the the side $\rho_1+\rho_2=1$ of the triangle
  $\mathbb T$ where the speed $v$ is ill-defined. This condition is in
  a sense optimal: in fact, if $f$ is for instance the affine function
  of slope $\rho$ with $\rho_1+\rho_2=1$ and $h_L$ approaches $f$ as
  in \eqref{eq:hypcondinitialLJ}, then the limit height profile will
  be either $+\infty$ for all positive times (if $\rho_1\in (0,1)$) or
  the limit is not necessarily unique (if $\rho_1\in\{0,1\}$), i.e. it may 
  depend on the microscopic details of the initial condition $h_L$.
\end{rem}

  \begin{rem}
    \label{rem:compar}
    As observed above, the function $v(\cdot)$ cannot be extented
    continuously to the whole boundary of $\mathbb T$, so
    Eq. \eqref{eq:visc} requires some care. What is really meant in
    the theorem is that $u$ is the unique viscosity solution of the
    PDE where $v(\cdot)$ is replaced by $\tilde{v}(\cdot)$, which is
    any Lipschitz extension of $v(\cdot)$ to the whole $\mathbb R^2$
    that coincides with $v$  on $\mathbb T_{M}$.  Since in this case the Hamiltonian is Lipschitz and depends only on the gradient, the standard theory
    of viscosity solutions implies that the solution $u$ exists and is
    unique, and the comparison principle  shows that if $f\in\bar\Gamma_{M}$, then
    $u(\cdot,t)\in \bar\Gamma_M$ for all times, so that $u$ does not
    depend on the way $\tilde v$ is defined outside $\mathbb T_M$. For a reference on Hamilton-Jacobi equations, see e.g \cite[Sections 5 and 7]{barles2013introduction}.
\end{rem}

\section{Properties of the microscopic dynamic}

\label{sec:properties}
In this section, we recall some basic properties of the dynamics
following \cite{legras2017hydrodynamic}. In addition, we prove in
Section \ref{sec:locality} a new locality property that is crucial in
the proof of Theorem \ref{theo:principalLJ}.

\subsection{Translation invariance, monotonicity and speed of propagation}
We begin with a couple of easy facts:
\begin{prop}\label{prop:micro}
The dynamics satisfies the following properties:
\begin{enumerate}
\item \emph{Vertical translation invariance}: $H(x,t;h+c,W) = H(x,t;h,W) +c$ 
for all $c \in \Z$
\item \emph{Monotonicity}: $h \leq h' \Rightarrow H(x,t;h,W) \leq H(x,t;h',W)$.
\end{enumerate}
\end{prop}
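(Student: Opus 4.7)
For vertical translation invariance, the plan is to unwind the definitions. The particle configuration $\eta$ associated with a height function $h$ depends only on the discrete gradients of $h$ (cf.\ Definition~\ref{def:height}), not on the additive constant $h(0,0)$. Replacing $h$ by $h+c$ with $c\in\Z$ therefore leaves $\eta$ unchanged, and, driven by the same realization $W$ of the Poisson clocks, the two particle systems evolve identically. In particular $J_x(t)$, the number of particles that have crossed the horizontal position $\bar z(x)$ on line $\bar\ell(x)$ by time $t$, is the same function of $W$ under both initial conditions. The claim $H(x,t;h+c,W)=H(x,t;h,W)+c$ is then an immediate consequence of the defining formula $H(x,t;h,W)=h(x)-J_x(t)$.

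For monotonicity, the plan is a coupling argument based on the graphical construction of \cite[Sec.~2.3]{legras2017hydrodynamic}. Given $h\le h'$, I would run the two dynamics with the same Poisson realization $W$ and show, by induction on the locally finite sequence of clock events in any bounded space-time region, that $H(\cdot,t;h,W)\le H(\cdot,t;h',W)$ holds at all times. Each firing at $(\ell,z)$ either does nothing or decrements the height by $1$ on a prescribed segment of line $\ell$, so the only way the order could be broken is via a firing that moves a particle in the upper configuration but is blocked (either by an occupied position or by a failing interlacement condition) in the lower one. To rule this out, one translates the blocking condition in the lower dynamics into a local comparison of heights via \eqref{eq:grad2}, and observes that under the induction hypothesis $h\le h'$ the same obstacle must also block the update in the upper dynamics, so that both heights decrease simultaneously along the segment (or neither does).

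The main obstacle in this plan is precisely this single-event attractivity check. While it is conceptually standard for attractive interacting particle systems, the interlacement constraints of the Borodin-Ferrari model couple particles on adjacent lines, so the blocking mechanism is nonlocal and the case analysis is somewhat more delicate than for the one-dimensional Hammersley-Aldous-Diaconis process. Identifying the correct partial order on particle configurations induced by $h\le h'$---readable off the tiling correspondence of Section~\ref{sec:loz}---is the preliminary step that makes attractivity transparent. Once the single-event statement is granted, the extension to all times is automatic from the well-definedness of the coupled dynamics, which is itself guaranteed by the growth condition \eqref{eq:1} on particle spacings in $\Omega$.
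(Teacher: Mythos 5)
Your argument for vertical translation invariance is exactly the ``trivial'' argument the paper has in mind: the particle configuration depends only on the discrete gradients of $h$, so $J_x(t)$ is unchanged under $h\mapsto h+c$, and the formula \eqref{eq:Hxt} does the rest. For monotonicity, the paper gives no proof at all and simply cites \cite[Th.~5.7]{legras2017hydrodynamic}, so a direct comparison is not possible; your coupling sketch via the graphical construction and induction on Poisson events is indeed the natural route, and is in the same spirit as the proof the paper does give, a few lines later, of Proposition~\ref{prop:dlocalite}.

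There is, however, a gap in the way you phrase the single-event attractivity step. You claim the only dangerous event is ``a firing that moves a particle in the upper configuration but is blocked in the lower one,'' and that ``the same obstacle must also block the update in the upper dynamics.'' Neither part is quite right. First, the order can also be threatened when \emph{both} particles jump to the vacant site $(\ell,z)$ but from different starting positions: the upper jump can be longer, so the upper height decreases at sites not touched by the lower jump. Second, the useful invariant is not ``lower blocked $\Rightarrow$ upper blocked''; it is the pointwise statement that at any vertex $x$ on line $\ell$ with $H(x,t_0^-)=H'(x,t_0^-)$, if a particle crosses $\bar z(x)$ in the upper dynamics at $t_0$ then a particle crosses it in the lower dynamics as well (given $H\le H'$ on a small neighborhood of $x$). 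That is precisely the case analysis (i)--(ii) carried out in the proof of Proposition~\ref{prop:dlocalite}: case (i) treats the jump landing at the position immediately left of $x$, and case (ii), where the jump overshoots, is handled by chaining the inequality through the height at $x-e_1-e_2$, not by asserting a blocking contradiction. With that local lemma in hand, your induction over events in a bounded space-time window, combined with the well-posedness guaranteed by the growth condition \eqref{eq:1}, gives the claim.
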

The former statement is trivial and the latter follows from \cite[Th. 5.7]{legras2017hydrodynamic}.

Next, we recall a locality property established in
\cite{legras2017hydrodynamic} and we improve it 
 to an almost sure statement (but the really new locality result
will come in next section). Informally, Proposition 5.12 in
\cite{legras2017hydrodynamic} tells that if we fix a lattice site $x$ and an initial height
function $h \in \Gamma_M$, then with probability $1-O(\exp(-c(M)n)$
the height function at $x$ up to time $n$ only depends on the Poisson
clocks at positions within distance $\mathrm{O}(n)$ from
$x$. 
Without much extra effort (we leave details to the reader), it is
possible to show the following slightly stronger statement (the
difference with respect to \cite[Prop. 5.12]{legras2017hydrodynamic}
is that the claim \eqref{eq:tildeW} holds simultaneously for every
$h\in \Gamma_M$):
\begin{lem}
\label{lem:dlocal} Fix $x\in \Z^2$.
For every integer $M$, there exist $c=c(M)>0$ and 
$\Delta=\Delta(M)>0$ such that for every integer $n$, for a set of realizations 
$W$ of probability $1-ce^{-n/c}$ of the Poisson process,  the following happens 
for any $h 
\in \Gamma_M$:
\begin{equation}\label{eq:tildeW}
 H(x,t;h, W) = H(x,t;h,\tilde{ W}) \text{ for every } t \leq \Delta n,
 \end{equation}
 for every $\tilde{ W}$ that coincides with $ W$ on\footnote{Here, we
   are viewing $W$ as a locally finite subset of points of $G\times \mathbb R_{+}$, where the first 
    coordinate corresponds to 
    position of sites where the clocks ring, and the
   second coordinate corresponds to the time when they ring.}
\[
R_n(x) := B(x,2n) \times [0,\Delta 
n],
\]
with $B(x,r)$ the  ball of radius $r$ centered at $x$.
\end{lem}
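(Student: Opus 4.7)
The plan is to retrace the proof of Proposition~5.12 in \cite{legras2017hydrodynamic} but to strengthen its main construction so that the resulting ``influence region'' is a measurable function of the clock realization $W$ alone, rather than of both $W$ and $h$. The crucial uniform input is the deterministic spacing bound built into $\Gamma_M$: for any $h \in \Gamma_M$, particle spacings on each line are at most $M$, which controls both the length of any single jump and the reach of the interlacing constraints between adjacent lines; as in \cite{legras2017hydrodynamic}, one checks that these bounds survive with high probability up to time $\Delta n$ for a suitable $\Delta=\Delta(M)$.

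Given $W$, we build inductively a random set $\mathcal{C} \subset G \times [0, \Delta n]$. Set $\mathcal{C}_0$ to be an $O(M)$-neighbourhood of $\{x\} \times [0,\Delta n]$, enough to capture the initial positions of the finitely many particles that determine $H(x,\cdot;h,W)$. To pass from $\mathcal{C}_k$ to $\mathcal{C}_{k+1}$, include every clock point $(y',s') \in W$ such that, \emph{for some} $h \in \Gamma_M$ compatible with the clock rings already processed, the ring at $(y',s')$ could trigger a particle jump (or enable a jump elsewhere via the interlacing constraint) whose space-time footprint meets $\mathcal{C}_k$. Thanks to the spacing bound, only clock points within spatial distance $O(M)$ of the projection of $\mathcal{C}_k$ need be considered, and only those in a time window of bounded length. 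Let $\mathcal{C} = \bigcup_k \mathcal{C}_k$. By construction, $\mathcal{C}$ depends on $W$ alone; and by the graphical construction of the dynamics, modifying $W$ outside $\mathcal{C}$ does not alter $H(x, t; h, W)$ for any $h \in \Gamma_M$ and any $t \leq \Delta n$. Hence the event $\{\mathcal{C} \subset R_n(x)\}$ already implies \eqref{eq:tildeW} simultaneously for every $h \in \Gamma_M$.

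The remaining estimate $\Pb(\mathcal{C} \not\subset R_n(x)) \leq c e^{-n/c}$ is a standard percolation-type bound. At each inductive step, the number of new clock rings added is stochastically dominated by a Poisson random variable of mean of order $M$, localized in a spatial window of diameter $O(M)$ and a time window of bounded length. For $\Delta = \Delta(M)$ small enough, the associated growth process is subcritical and can be compared with a first-passage percolation of finite asymptotic speed, so that Chernoff bounds yield the required exponential tail for the spatial diameter of $\mathcal{C}$. The main (and only mildly technical) obstacle, as compared with \cite[Prop.~5.12]{legras2017hydrodynamic}, is precisely the uniformization over $h$: taking the envelope over admissible $h \in \Gamma_M$ at each inductive step inflates the cluster by a multiplicative factor depending only on $M$, which does not affect subcriticality provided $\Delta$ is chosen small accordingly. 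This is why the authors remark that no substantial extra effort is required.
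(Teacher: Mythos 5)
The paper gives no proof of this lemma: it merely asserts that the statement follows from \cite[Prop.~5.12]{legras2017hydrodynamic} ``without much extra effort'' and leaves the details to the reader, so there is no internal argument to compare your proposal against.

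That said, your inductive construction of an envelope cluster $\mathcal{C}(W)$ has a genuine gap. The key step is the assertion that ``only clock points within spatial distance $O(M)$ of the projection of $\mathcal{C}_k$ need be considered,'' which you justify by the spacing bound defining $\Gamma_M$. But the bound $z_{(p,\ell)}-z_{(p-1,\ell)}\le M$ is a property of the configuration at time zero only; once the dynamics runs, spacings (and therefore jump lengths) can grow with no deterministic control, and the paper stresses exactly this at the end of Section~3.1, where it says there is no way to obtain an a-priori control on $M(s)$ such that $\eta(s)\in\Gamma_{M(s)}$ for $s>0$ and identifies this as the main structural difficulty of the Borodin--Ferrari dynamics. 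Your parenthetical remark that ``as in \cite{legras2017hydrodynamic}, one checks that these bounds survive with high probability up to time $\Delta n$'' cannot simply be cited: such a statement is not a consequence of Prop.~5.12, and proving it would itself require the kind of propagation estimate you are constructing, so the argument is circular as written. A related symptom: your ``Poisson of mean of order $M$ in a time window of bounded length'' step is not spelled out; the full time window is $[0,\Delta n]$ and without a control on how long each link of the chain takes, the offspring count of your branching comparison is unbounded.

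To close the gap you would need to either (a) actually reproduce the mechanism of \cite[Prop.~5.12]{legras2017hydrodynamic}, which most plausibly performs a backward exploration terminating at time zero and invokes the spacing bound only there (in the spirit of the increasing-subsequence estimate used in Lemma~\ref{lem:incseq}), rather than a forward-in-time construction relying on spacings at positive times; or (b) prove as a separate lemma a high-probability control on spacings inside the cluster, which is a nontrivial task and precisely what the paper says it does not know how to do a priori. The most economical reading of the paper's remark is (a): if the good event constructed in Prop.~5.12 is already a deterministic function of $W$ restricted to $R_n(x)$ (say, absence of long chains of clock rings), then the proof there uses only the constant $M$ and never the specific particle positions, so the conclusion is automatically uniform over $h\in\Gamma_M$, and the ``extra effort'' is a re-reading rather than the new envelope construction you propose.
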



In the proof of Theorem \ref{theo:principalLJ}, we need instead an almost sure result:
\begin{prop}[Finite speed of propagation]
	\label{prop:finitespeed}
For almost every $W$ the following holds: for every integer $M $ there exists 
$\alpha = \alpha(M)$ such that for every time $T  > 0$, every $x_0\in \R^2$ and 
every $R \geq 0$, we have for $L$ large enough
\begin{equation}\label{eq:tildeWbis}
 H(x,t;h, W) = H(x,t;h,\tilde{ W}) \text{ for every } t \leq LT, \, x\in 
 B(Lx_0,LR), \, h\in \Gamma_M
 \end{equation}
 for every $\tilde{ W}$ that coincides with $ W$ on
  \begin{equation}\label{eq:RL}
B(Lx_0,L (R+\alpha T)) \times 
  [0,LT].
  \end{equation}
\end{prop}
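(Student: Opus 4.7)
The plan is to upgrade Lemma~\ref{lem:dlocal} from a pointwise-in-$(x,n)$ statement with exponentially decaying failure probability to an almost-sure statement uniform in $(T,x_0,R)$, via a Borel--Cantelli argument with a union bound taken over a polynomial-in-$n$ spatial box.

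Fix an integer $M$ and set $\alpha(M):=3/\Delta(M)$, with $\Delta(M)$ from Lemma~\ref{lem:dlocal}. For $n\in\N$ and $x\in\Z^2$, let $A^{(M)}_{x,n}$ denote the event that the conclusion \eqref{eq:tildeW} fails at $(x,n)$ (for some $h\in\Gamma_M$, some $t\leq\Delta n$, and some $\tilde W$ matching $W$ on $R_n(x)$); the lemma gives $\Pb(A^{(M)}_{x,n})\leq c\,e^{-n/c}$. A union bound over the $O(n^4)$ lattice sites in $B(0,n^2)\cap\Z^2$ yields
\[
\Pb\Big(\bigcup_{\|x\|\leq n^2,\,x\in\Z^2} A^{(M)}_{x,n}\Big)\leq C\,n^4 e^{-n/c},
\]
which is summable in $n$. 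Borel--Cantelli then produces a full-measure event $\mathcal{E}_M$ on which some $N_0(M,W)<\infty$ exists such that for every $n\geq N_0(M,W)$, the conclusion of Lemma~\ref{lem:dlocal} holds simultaneously at every $(x,n)$ with $\|x\|\leq n^2$, uniformly in $h\in\Gamma_M$. We work on $\mathcal{E}:=\bigcap_{M\in\N}\mathcal{E}_M$, still of full measure.

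Now fix $W\in\mathcal{E}$ and arbitrary $M,T>0,x_0\in\R^2,R\geq 0$, and set $n_L:=\lceil LT/\Delta(M)\rceil$. For $L$ large enough (depending on $M,T,x_0,R,W$) that both $n_L\geq N_0(M,W)$ and $L(\|x_0\|+R)\leq n_L^2$, every $x\in B(Lx_0,LR)\cap\Z^2$ lies in $B(0,n_L^2)\cap\Z^2$, so Lemma~\ref{lem:dlocal} at $(x,n_L)$ is in force uniformly in $h\in\Gamma_M$. Unioning the individual space-time dependence regions $B(x,2n_L)\times[0,\Delta n_L]$ over such $x$ gives dependence of $\{H(x,t;h,W):x\in B(Lx_0,LR)\cap\Z^2,\,t\leq \Delta n_L\}$ on the restriction of $W$ to $B(Lx_0,LR+2n_L)\times[0,\Delta n_L]$; the choice $\alpha=3/\Delta$ ensures $LR+2n_L\leq LR+2LT/\Delta+2\leq L(R+\alpha T)$ for $L$ large.

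To shrink the time window from $[0,\Delta n_L]$ back to $[0,LT]$, apply causality of the graphical construction. Given $\tilde W$ matching $W$ on $B(Lx_0,L(R+\alpha T))\times[0,LT]$, define $\tilde W'$ by resetting $\tilde W'\equiv W$ on $B(Lx_0,L(R+\alpha T))\times(LT,\Delta n_L]$ and leaving $\tilde W'=\tilde W$ elsewhere. Then $\tilde W'$ matches $W$ on the full space-time region required by the pointwise lemma, yielding $H(x,t;h,W)=H(x,t;h,\tilde W')$ for $t\leq\Delta n_L$; and since $\tilde W'$ and $\tilde W$ agree at every space-time point with $t\leq LT$, causality forces $H(x,t;h,\tilde W')=H(x,t;h,\tilde W)$ for $t\leq LT$, proving \eqref{eq:tildeWbis}. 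The main (and essentially only) obstacle is quantifier bookkeeping: the spatial union must be over a polynomial-in-$n$ box so that a single almost-sure threshold $N_0(M,W)$ can handle all $(T,x_0,R)$ as $L\to\infty$ (a linear box would impose $T\geq\Delta(\|x_0\|+R)$ and fail for large initial positions), and $\alpha$ must be kept depending only on $M$. Summability of $n^4 e^{-n/c}$ is what makes the scheme work.
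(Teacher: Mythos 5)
Your proof is correct and follows the same scheme as the paper: Lemma~\ref{lem:dlocal}, a union bound over a spatial box growing polynomially in the time parameter, and Borel--Cantelli. Two points where you are more careful than the paper's terse argument: (1) by running Borel--Cantelli directly over $n$ (with box $B(0,n^2)$) rather than over $L$ at fixed rational $T$, you dispense with the paper's rational-approximation-of-$T$ step; and (2) your explicit causality argument, resetting $\tilde W$ to $W$ on $(LT,\Delta n_L]$ before invoking the lemma, fixes a genuine (if minor) gap that the paper glosses over --- Lemma~\ref{lem:dlocal} requires $\tilde W$ to agree with $W$ on the time window $[0,\Delta n_L]$ which in general strictly contains $[0,LT]$, so $R_{n_L}(x)$ does not literally sit inside $B(Lx_0,L(R+\alpha T))\times[0,LT]$; relatedly, your $\alpha=3/\Delta$ is safely large, whereas the paper's claim that $\alpha=2\Delta^{-1}$ suffices ignores the rounding $n_L=\lceil LT/\Delta\rceil > LT/\Delta$.
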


\begin{proof}
  If we apply Lemma \ref{lem:dlocal} with
  $n := \lceil LT/\Delta \rceil$, use an union bound for all $x$ in
  $B(0,L^2)$ (so that it includes all $x$ in $B(Lx_0,LR)$ for any fixed $x_0 
  \in \R^2$ and all $L$
  large enough) and apply Borel-Cantelli Lemma, we get the following. For almost
  every $W$, every integer $M$, every rational $T > 0$, every $x_0 \in \R^2$ 
  and every 
  $R \geq 0$, \eqref{eq:tildeWbis} holds all $L$ large enough and
  for every $\tilde{ W}$ that coincides with $W$ on the domain
  $B(Lx_0,L (R+\alpha T)) \times [0,LT]$ with $\alpha$ chosen large
  enough ($\alpha= 2 \Delta^{-1}$ suffices) such that this domain
  contains every $R_{\lceil LT/\Delta \rceil}(x)$ with
  $x \in B(Lx_0,LR)$.  The rest of the proof follows from rational
  approximation of~$T$.
\end{proof}

\begin{coro}[Weak Locality]
\label{coro:finitespeed}
For almost every $W$ the following holds: for every integer $M$  there exists  
$\alpha = \alpha(M)>0$ such that for every $T> 0$,
every $R \geq 0$, for $L$ large enough
\begin{eqnarray}\label{eq:tildeWbisbis}
 \sup_{\substack{x \in B(0,LR)\\ t \leq LT}} |H(x,t;h, W) - H(x,t;h',W)| \leq 
 \sup_{x \in 
   B(0,L(R+\alpha T))} |h(x) - h'(x)|
\end{eqnarray}
 for every $h,h'\in\Gamma_M$.
\end{coro}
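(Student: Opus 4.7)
Plan. The plan is to deduce \eqref{eq:tildeWbisbis} from Proposition \ref{prop:finitespeed} together with monotonicity and vertical translation invariance (Proposition \ref{prop:micro}).

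First, set $C := \sup_{x \in B(0, L(R+\alpha T))}|h(x)-h'(x)|$, a non-negative integer because elements of $\Gamma$ are integer-valued. By vertical translation invariance, $H(x,t;h\pm C,W)=H(x,t;h,W)\pm C$, so \eqref{eq:tildeWbisbis} is equivalent to the two-sided inequality
\[
H(x,t;h-C,W)\leq H(x,t;h',W)\leq H(x,t;h+C,W)\qquad \text{on } B(0,LR)\times [0,LT].
\]
Both inequalities are instances of a \emph{truncated monotonicity}: an ordering of two initial data on the enlarged ball $B(0,L(R+\alpha T))$ should produce the analogous ordering of the heights on $B(0,LR)\times[0,LT]$.

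Next, I would reduce truncated monotonicity to ordinary monotonicity via Proposition \ref{prop:finitespeed}. Replacing $W$ by an auxiliary realization $\tilde W$ that agrees with $W$ on $B(0,L(R+\alpha T))\times[0,LT]$ and has no Poisson points outside leaves $H(\cdot,\cdot;h,W)$ and $H(\cdot,\cdot;h',W)$ unchanged on $B(0,LR)\times[0,LT]$. Under $\tilde W$, no particle originally outside the enlarged ball ever jumps; the evolution inside the enlarged ball is then a closed subsystem, driven by $\tilde W$ and by the initial data on that ball, subject to the frozen interlacing conditions from its exterior. The key claim is that, viewed on the smaller space-time region $B(0,LR)\times[0,LT]$, this dynamics is insensitive to the frozen exterior data---only the initial data on the enlarged ball matters. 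Granting this, the inequality $h-C\leq h'$ on the enlarged ball, combined with the monotonicity of Proposition \ref{prop:micro}(ii) applied to the closed subsystem, yields the first inequality above; the second is symmetric.

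The main obstacle is precisely this insensitivity, which is an initial-condition analogue of the $W$-locality in Proposition \ref{prop:finitespeed}. Since Borodin-Ferrari jumps are unbounded, interlacing pressure from the frozen exterior could a priori affect interior jumps, and a direct bound on its propagation is required. I expect it to follow by an adaptation of the quantitative propagation-of-influence argument already underlying Lemma \ref{lem:dlocal}: the bounded-spacing hypothesis $h,h'\in\Gamma_M$ makes the estimate uniform in the initial data, and choosing $\alpha=\alpha(M)$ to be a fixed multiple of the constant furnished by Proposition \ref{prop:finitespeed} ensures that the exterior influence cannot reach $B(0,LR)$ within time $LT$. Once the truncated monotonicity is established, applying it to the pairs $(h-C,h')$ and $(h',h+C)$ gives the Lipschitz bound \eqref{eq:tildeWbisbis}.
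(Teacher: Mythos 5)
Your high-level plan --- reduce via vertical translation invariance, pass to the $W$-restriction $\tilde W$ of the Poisson process, and invoke some form of localized monotonicity on the smaller cylinder --- is essentially the structure of the paper's proof, and the reduction to the two-sided bound $H(\cdot;h-C,W)\leq H(\cdot;h',W)\leq H(\cdot;h+C,W)$ via Proposition \ref{prop:micro}(1) is correct. However, the crucial step is exactly the one you flag and leave as a ``key claim'': that the dynamics on $B(0,LR)\times[0,LT]$ under $\tilde W$ depends only on the initial data inside $B(0,L(R+\alpha T))$, so that the ordering $h-C\leq h'$ on the enlarged ball can be fed into monotonicity. This is not a consequence of Proposition \ref{prop:finitespeed} or of Lemma \ref{lem:dlocal}: those results concern insensitivity to changes in the \emph{Poisson clocks} outside the enlarged cylinder, with the initial condition held fixed; they say nothing about comparing two distinct initial conditions that agree (or are ordered) only on a finite region. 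A frozen particle just outside $B(0,L(R+\alpha T))$ still imposes interlacing constraints, and whether its influence stays out of the inner cylinder for time $LT$ is a separate assertion that must be proved, not ``expected to follow by adaptation.'' Your remark that the argument should be uniform in $h\in\Gamma_M$ with a fixed $\alpha(M)$ also misplaces where the uniformity is needed --- the localized comparison you are after is in fact deterministic.

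The paper closes precisely this gap by invoking the local version of monotonicity from \cite[Th.~5.10]{legras2017hydrodynamic}: if $h\leq h''$ on $B(0,L(R+\alpha T))$ and $\tilde W$ is the restriction of $W$ to $B(0,L(R+\alpha T))\times[0,LT]$, then $H(\cdot,\cdot;h,\tilde W)\leq H(\cdot,\cdot;h'',\tilde W)$ on $B(0,LR)\times[0,LT]$, deterministically for every $L\geq 1$. Combining that with Proposition \ref{prop:finitespeed} (to replace $W$ by $\tilde W$ and back) and vertical translation invariance gives the bound. To complete your argument you must either cite that local monotonicity result, or prove an equivalent ``truncated monotonicity'' statement; the route through Lemma \ref{lem:dlocal} as you sketch it does not supply it.
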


\begin{proof}
We fix $W$ in the event of probability $1$ of Proposition 
\ref{prop:finitespeed}. Let $M\in \N ,T>0,R \geq 0$ and let $h,h' \in 
\Gamma_M$. We define $m$ as the supremum in the 
r.h.s of \eqref{eq:tildeWbisbis} and we set $h'' := h' + m$. Since $h \leq h''$ on 
$B(0,L(R+\alpha T))$, the local version of monotonicity stated in 
\cite[Theorem 5.10]{legras2017hydrodynamic} implies that for all $L \geq 1$
\[
H(x,t;h, \tilde{W}) \leq H(x,t;h'',\tilde{W}) \text{ for every } x \in 
B(0,LR), \, t \leq LT
\]
where $\tilde{W}$ is the restriction of $W$ to $B(0,L (R+\alpha T)) \times 
[0,LT]$. By finite speed of propagation (Proposition
\ref{prop:finitespeed}) and by vertical translation invariance
(Proposition \ref{prop:micro}), for all $L$ large enough, we have for
all $t \leq LT$ and $x \in B(0,LR)$,
\begin{align*}
H(x,t;h,W) &= H(x,t;h, \tilde{W}) \leq H(x,t;h'',\tilde{W}) \\
& = H(x,t;h'+m,W) = H(x,t;h',W) +m.
\end{align*}
Similarly, we can show that for all $L$ large enough, for all $t \leq LT$ 
and $x \in B(0,LR)$,
$
H(x,t;h',W) \leq H(x,t;h,W) + m,
$
which concludes the proof of Corollary \ref{coro:finitespeed}.
\end{proof}

We point out that the speed of propagation $\alpha(M)$ is not uniform
in $M$.  This is a problem, since later we would need  to apply the
propagation of information result from time $s>0$ to time $t>s$ but,
while we know by construction that the initial condition belongs to
$\Gamma_M$, the same does not hold at the later times $s>0$ (and we do
not see how to obtain an apriori control on $M(s)$ such that
$\eta(s)\in\Gamma_{M(s)}$). This is the main difficulty that prevented
a proof of a full hydrodynamic limit in \cite{legras2017hydrodynamic}
and this is why we call a result like Corollary \ref{coro:finitespeed}
``weak locality'' (it is much weaker than the locality statements used
e.g. in
\cite{lerouvillois2019hydrodynamic,rezakhanlou2001continuum,zhang2018domino}
to prove full hydrodynamic limits). One of the novelties of this
article is a smarter locality property that we state and prove in the
next section.

\subsection{A new version of locality}
\label{sec:locality}
In this section, we prove a very useful result of locality that says informally 
that if any height function $H$ is smaller than a height function $H'$ in a 
specific bounded \emph{space-time} domain, 
then we also have that $H(x,t) \leq H'(x,t)$.

Let us start by introducing some notations.  For any $\ell \in \N$, we
note $T_{\ell}\subseteq \Z^2$ the following triangle of size $\ell$
(see Figure \ref{fig:triangle})
\begin{equation}
	T_{\ell} = \{ y=(y_1,y_2) \in \Z^2, \: -2\ell<y_1 \leq 0, \, -2\ell<y_2 
	\leq 0, \, -2\ell \leq y_1 + 
	y_2 \leq 0 \}
	\end{equation}
and for all $t\geq 0$,
\begin{equation}
	E_{\ell} = (0,0,-\ell) \cup  \bigcup_{y \in T_{\ell}\setminus\{(0,0)\}} \{y\} 
	\times [-\ell 
	-\lfloor(y_1 + y_2)/2 \rfloor-1 , -\ell - \lfloor(y_1 + y_2)/2 \rfloor]  
	\subseteq \Z^2 \times \R.
	\end{equation}

Finally, for $x \in \Z^2$ and $t \geq \ell$, we define the space-time translated set
\begin{equation}
	E_{x,t,\ell} := (x,t) + E_{\ell} \subseteq \Z^2 \times \R_+.
	\end{equation}

\begin{figure}[htbp!]
	\begin{center}
		\includegraphics[scale=0.5]{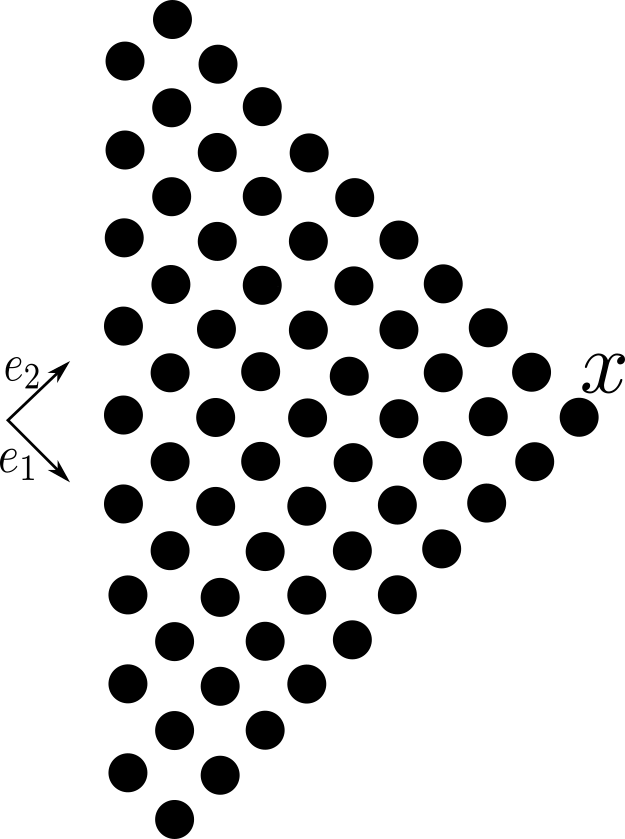}
		\caption{ \footnotesize The triangle $T_{\ell}$ for $\ell=5$.}
		\label{fig:triangle}
	\end{center}
\end{figure}

\begin{prop}[Space-time locality]\label{prop:dlocalite}
  Let $(x,t) \in \Z^2 \times \R_+$ and $l \in \N$ such that
  $t \geq \ell$. Also, let $h,h' \in \Gamma$. Then, we have
		\begin{eqnarray}
		\label{eq:triang}
		 H(\cdot,\cdot;h,W) \leq H(\cdot,\cdot;h',W) \text{ on } 
		E_{x,t,\ell} \Rightarrow  H(x,\cdot;h,W) \leq H(x,\cdot;h',W) 
		\text{ on } [t-\ell,t] .    
		\end{eqnarray}
\end{prop}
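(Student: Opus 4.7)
The plan is to exploit the rhombus-tiling representation of Section~\ref{sec:loz}, under which the height function evolves by strictly local elementary moves: each decrement of $H$ by one at a vertex $y\in G^*$ corresponds to a rhombus flip involving only the three lozenges meeting at $y$, hence only the heights at $y$ and at its six immediate $G^*$-neighbors. A Borodin-Ferrari particle jump of length $k$ decomposes, at the level of the height function, into a sequence of $k$ such elementary flips happening simultaneously along the path traversed by the particle, in exact analogy with the PNG representation of the Hammersley process; this decomposition is what underlies the bijection with a discretized Gates-Westcott-type dynamics alluded to right before the statement.

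Concretely, the first step is to make this precise by exhibiting a finite neighborhood $N \subset \Z^2$ and a deterministic rule $\Phi$ such that, conditionally on $W$, the trajectory $s\mapsto H(\cdot, s; h, W)$ is generated by the following scheme: each ring of a clock of $W$ at $(y,s)$ triggers the update $H(y, s) = \Phi\bigl( (H(y', s^{-}))_{y' \in y+N}\bigr)$ and leaves every other height unchanged. The existence of such a $\Phi$ is the operational content of the ``elementary rhombus flip'' picture, and its monotonicity in each argument is a one-step version of Proposition~\ref{prop:micro}(2). Crucially, the decomposition of a long particle cascade into local flips ensures that $N$ and $\Phi$ depend neither on the initial condition nor on inter-particle distances, curing the defect in Corollary~\ref{coro:finitespeed}.

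Given such a local, monotone rule, I would then propagate backward in time from each conclusion point $(x, t')$, $t' \in [t-\ell, t]$: the set of space-time points whose values are needed in order to reconstruct $H(x, t'; h, W)$ grows, at each backward step through a Poisson ring, by at most the diameter of $N$ in space. The triangle $T_\ell$ together with the precise offsets $-\ell - \lfloor (y_1+y_2)/2 \rfloor$ are engineered so that this growing set fits exactly inside $E_{x, t, \ell}$; the denominator $2$ in the floor reflects the ``speed-2'' propagation dictated by the hexagonal flip geometry on $G^*$, in which a flip at $y$ cannot be triggered by information strictly farther than a $G^*$-neighbor away during one unit of time. Monotonicity of $\Phi$ then transfers the hypothesis $H(\cdot, \cdot; h, W) \leq H(\cdot, \cdot; h', W)$ on $E_{x,t,\ell}$ to the axis $\{x\}\times[t-\ell, t]$ of the cone, which is the conclusion.

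The main obstacle is the first step: extracting from the particle-level BF dynamics, where clock rings at empty sites can produce arbitrarily long cascades, a local, monotone update rule $\Phi$ at the height-function level. The graphical construction of \cite{legras2017hydrodynamic} is formulated in terms of particle jumps, and some work is needed to verify both the locality and the monotonicity of $\Phi$, as well as to pin down the precise constant $2$ that governs the slope of $E_\ell$. Once $\Phi$ is in place, the backward propagation and the monotone transfer of the inequality through the cone are mechanical.
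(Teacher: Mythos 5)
Your high-level intuition is right --- there is a hidden Gates-Westcott/PNG-type locality and the proof does exploit monotonicity at the level of height increments --- but the mechanism you propose does not exist, and you have correctly put your finger on exactly where it breaks. Your scheme posits a deterministic, synchronous local rule $\Phi$ such that a single Poisson ring at a site triggers $H(y,s)=\Phi\bigl((H(y',s^-))_{y'\in y+N}\bigr)$ and ``leaves every other height unchanged.'' Neither clause is true for the Borodin-Ferrari dynamics: a single clock ring at a $G$-site $(z,\ell)$ causes a particle to jump there from arbitrarily far to the right, and this decrements the height \emph{simultaneously} at the whole string of $G^*$-vertices $y,\,y+(1,1),\,y+(2,2),\dots$ between the landing site and the jump origin. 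In particular, whether $H$ decrements at a given vertex $(0,0)$ at time $t_0$ is \emph{not} a function of $(H(y',t_0^-))_{y'\in (0,0)+N}$ and the local clocks; it depends on whether $H$ decrements at $(-1,-1)$ \emph{at the same instant} $t_0$, which in turn depends on $(-2,-2)$, and so on all the way to the (possibly distant) clock that actually rang. A backward-propagation cone that grows by a bounded radius per Poisson ring therefore does not close up inside $E_{x,t,\ell}$, because a single ring can inject information from outside the cone.

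The paper's proof sidesteps this obstruction: it never tries to \emph{reconstruct} $H$ from boundary data. Instead it works directly with the comparison $H\leq H'$, and the base case $\ell=1$ isolates precisely the two ways a decrement at $(0,0)$ can arise: (i) a particle lands at $z=-1$, which is genuinely a local flip detectable from the four heights around the unit square at time $t_0^-$ plus the clock at $z=-1$; and (ii) a particle passes through, in which case $H'$ also decrements at $(-1,-1)$ at time $t_0$ and the chain of inequalities $H(0,t_0)\leq H(-1,-1,t_0)+1\leq H'(-1,-1,t_0)+1=H'(-1,-1,t_0^-)=H'(0,t_0^-)-1=H(0,t_0^-)-1$ forces $H$ to decrement too. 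Crucially, case (ii) uses the hypothesis $H\leq H'$ at $(-1,-1)$ at the time $t_0$ itself, which is exactly why the set $E_{0,t,1}$ includes a full time interval $[t-1,t]$ at the lower-left vertex rather than a single time slice. The induction from $\ell$ to $\ell+1$ then just peels off one layer by applying the $\ell=1$ statement to each unit square. So the right ``local rule'' is a directed comparison rule across a cascade, not a synchronous update, and filling that in is the whole content of the argument; the rest of your sketch (the speed-$2$ slope of $E_\ell$, the monotone transfer of the inequality, the GW bijection as the conceptual source) is accurate once this is supplied.
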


We emphasize that the main difference between a statement like
Corollary \ref{coro:finitespeed} and Proposition \ref{prop:dlocalite}
is that, while in both cases the difference between $H$ and $H'$ at some
 $(x,t)$ is estimated in terms of the difference in some domain,
in the former case the size of the domain depends on the bound  $M$ on 
the inter-particle distances, while in the latter it is independent
of it.

Before proving  Proposition
\ref{prop:dlocalite},
let us explain briefly where it comes from. This is best understood in the easier
case of the one-dimensional Hammersley (or Hammersley-Aldous-Diaconis)
process \cite{MR1355056}, whose definition we recall informally. The state
space consists of locally finite sets of points (or ``particles'')
that lie on $\R$. Particles jump to the left and jumps are determined
by a Poisson Point Process on $\R \times \R_+$ of intensity $1$: when
a clock rings at $(x,t)$, the leftmost particle to the right of $x$
jumps to $x$. The space-time trajectories of the particles can be
represented by a sequence of up-left paths as shown in Figure
\ref{fig}.
\begin{figure}[htbp!]
\includegraphics[width=9cm]{./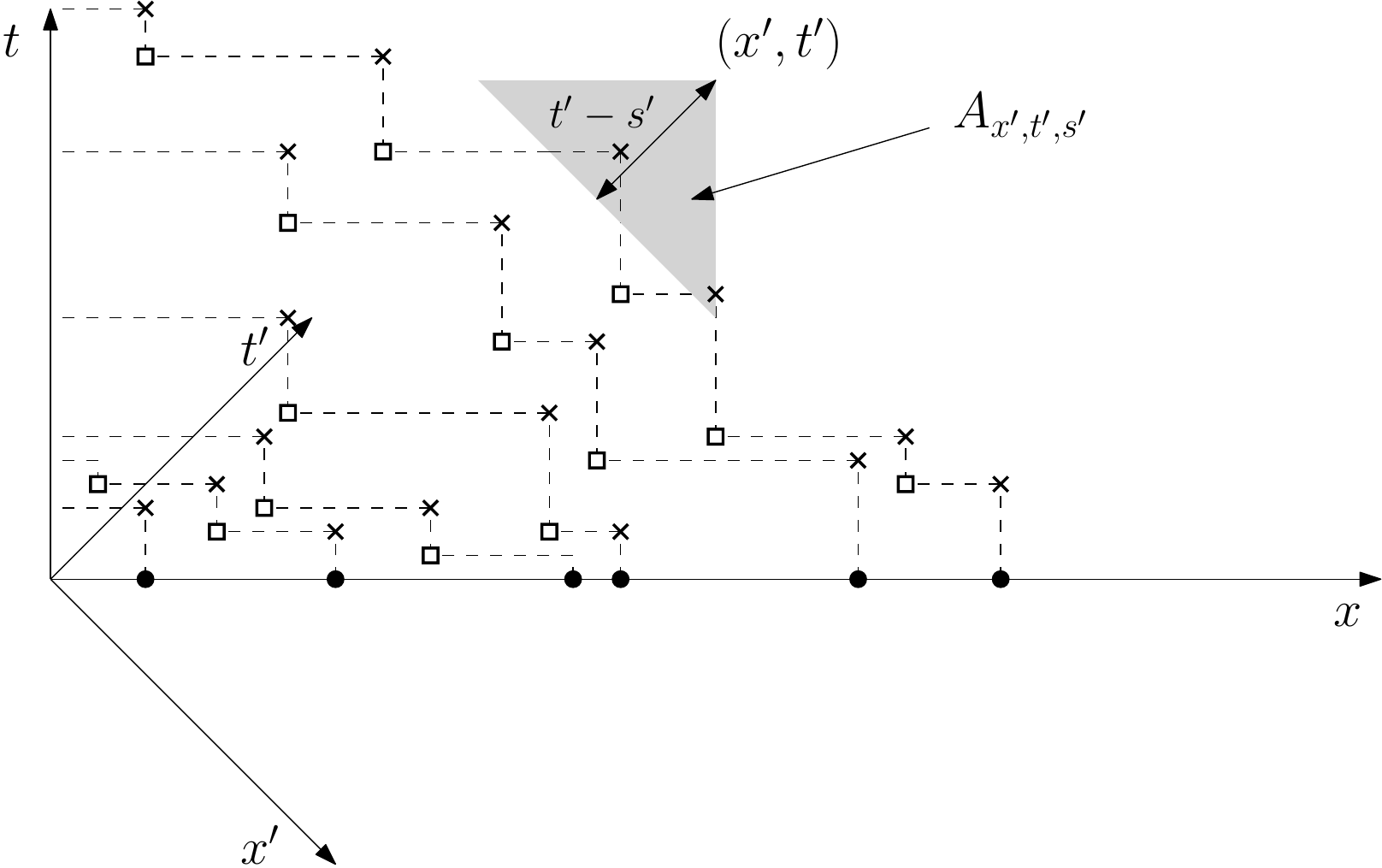}
\caption{Black dots indicate the position of particles at time zero
  for the Hammersley process, and dashed lines are their space-time
  trajectories. Poisson clock rings are marked with squares. Squares
  (resp. crosses) correspond to creations (resp. annihilations) of
  kink-antikink pairs for the PNG.
  The height  at the point $(x',t')$ is determined by the
  height on the diagonal side of $A_{x',t',s'}$ and by the Poisson
  points inside the triangle.}
\label{fig}
\end{figure}

Rotating the picture by $45$ degrees, one obtains the
graphical representation of the so-called Polynuclear Growth Model
\cite{MR2249629}. In fact, it is well know that the Hammersley
process and the PNG are in bijection. Note however that the initial
condition at $t=0$ for the Hammersley process translates into a
condition on the line $t'=x'$ for the PNG (the coordinates $x',t'$ are as
in Figure \ref{fig}). For the PNG, a deterministic locality property
holds, because ``kinks'' and ``anti-kinks'' travel at deterministic speed
$1$. In other words, the PNG height function at $(x',t')$ is entirely
determined by  the Poisson points in the triangular
region $A_{x',t',s'}$ of Fig. \ref{fig} (with $s'<t'$) together with the height at time  $s'$ on a segment
$[x'-(t'-s'),x'+(t'-s')]$ (i.e. on the diagonal side of the triangle). By the bijection, we obtain
the same property for the Hammersley process and the diagonal side of
$A_{x',t',s'}$ is the analog of the set $E_{x,t,\ell}$ of Proposition
\ref{prop:dlocalite}. The reason why Proposition \ref{prop:dlocalite}
holds is that, also for the Borodin-Ferrari dynamics, one can establish a
bijection with a discretized version of the Gates-Westcott growth
model, for which information travels ballistically. The bijection with the Borodin Ferrari dynamics is not discussed explicitly in the literature and we do not 
describe it here either, as we need only its consequence,
Proposition \ref{prop:dlocalite}, of which we give a self-contained proof.

\begin{proof}[Proof of Proposition \ref{prop:dlocalite}]
	We proceed by induction and we assume without loss of generality that $x=0$. Let us start with the case $\ell=1$ (see Figure \ref{fig:triangle2}).
	
	For the sake of simplicity, let us write $H(\cdot,t)$ for $H(\cdot,t;h,W)$ and $H'(\cdot,t)$ for $H(\cdot,t;h',W)$. We 
	want to show:
	\begin{equation}\label{eq:casl=1}
	 H(\cdot,\cdot) \leq H'(\cdot,\cdot) \text{ on } E_{0,t,1} 
	 \Rightarrow  H(0,\cdot) \leq H'(0,\cdot) \text{ on } [t-1,t] \: .
	\end{equation}
	
	\begin{figure}[htbp!]
		\begin{center}
			\includegraphics[scale=.7]{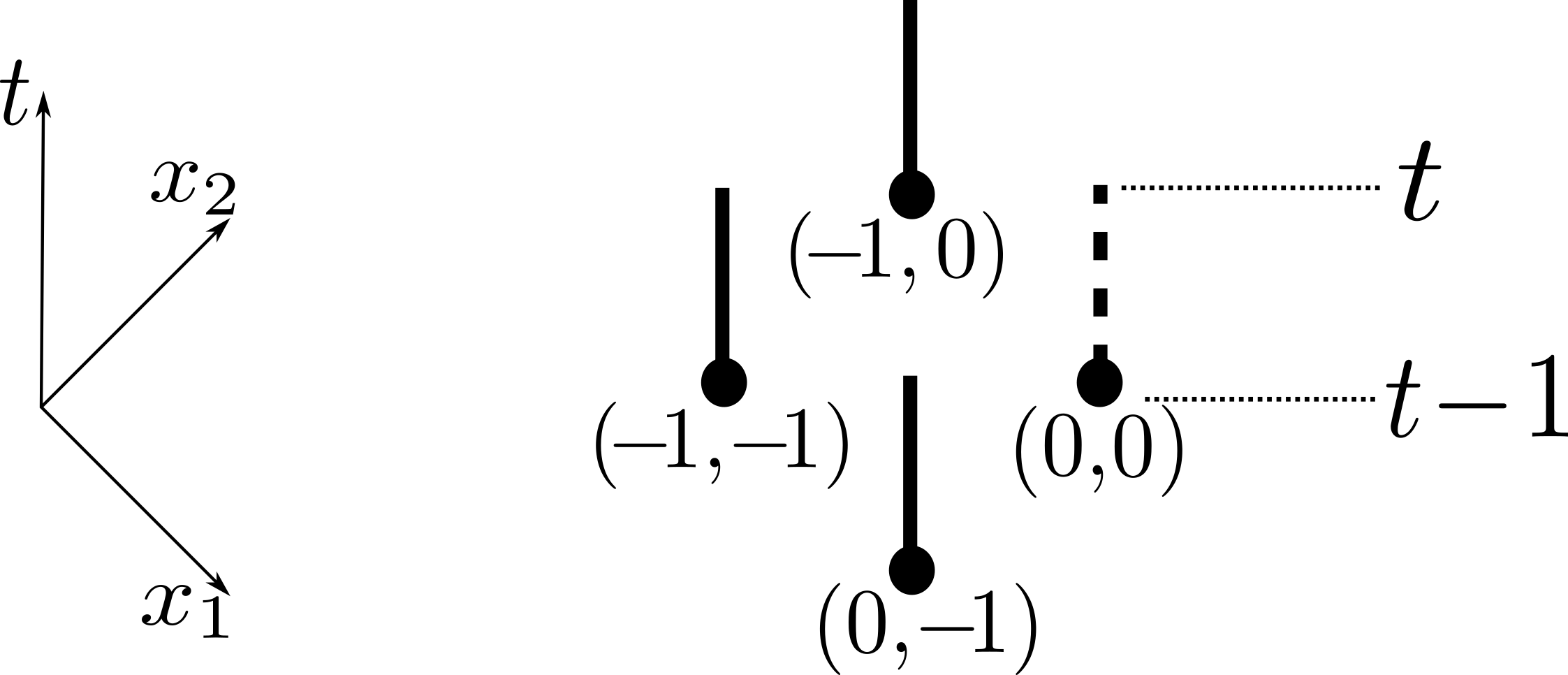}
			\caption{ \footnotesize Case $\ell=1$ of Proposition \ref{prop:dlocalite}. If $H \leq H'$ on the bold lines and points (corresponding to $E_{0,t,1}$), then 
			$H \leq H'$ also on the dash line (corresponding to $\{0\} \times (t-1,t]$).}
			\label{fig:triangle2}
		\end{center}
	\end{figure}
	
	
	Since height functions are almost surely right-continuous with
        $-1$ jumps (when a particle jumps), it is enough to show that
        if at time $t_0 \in [t-1,t]$, $H(0,t_0^-) = H'(0,t_0^-)$ and
        $H'$ decreases by $1$ at $0$, then it is also the case for
        $H$. By definition of the height function, it means that the
        left-most particle to the right of $0$ on line $0$ has crossed $0$ at 
        time
        $t_0$ for $H'$. We have to show that the same happens for
        $H$. Two cases can occur:
	\begin{enumerate}
        \item [(i)] The particle landed at the rightmost position on the left 
        of $x=0$, i.e. at horizontal position  $z=-1$ (corresponding to 
        position $(-1/2,-1/2)$ in the coordinates chosen on $G^*$).  By
          definition of the height function and of the dynamics, it is
          easily checked that the left-most particle to the right of
          $0$ can jump at position $z=-1$ (at time $t_0$, for the 
          configuration with height function $H'$) if and only if
		\[
		\left\{
		\begin{aligned}
		H'((0,-1),t_0^-) - 
				H'((-1,-1),t_0^-) &= 0 \\ H'((-1,0),t_0^-) - H'((-1,-1),t_0^-) &= 0 \\
		H'(0,t_0^-) - H'((-1,-1),t_0^-) &= 1.
		\end{aligned}
		\right.
		\]
		Since $H(0,t_0^-) = H'(0,t_0^-)$ and by assumption
		$H((-1,-1),t_0^-) \leq H'((-1,-1),t_0^-)$, we deduce that
		$$H(0,t_0^-) - H((-1,-1),t_0^-) = 1$$ (recall that from \eqref{eq:height}-\eqref{eq:height2} that discrete height gradients take values in $\{0,1\}$) and also 
		$H((-1,-1),t_0^-)=H'((-1,-1),t_0^-)$. Moreover,
                since $H((-1,0),t_0^-) \leq H'((-1,0),t_0^-)$ and
                $H((0,-1),t_0^-) \leq H'((0,-1),t_0^-)$, we have
                that
                $$H((-1,0),t_0^-) - H((-1,-1),t_0^-) =
                H((0,-1),t_0^-) - H((-1,-1),t_0^-) = 0$$ and thus
                the left-most particle to the right of $0$ for $H$ is also free to
                jump to position $z=-1$ at time $t_0$ and
                thus $H$ also decreases by $1$ at $0$.
		
		\item [(ii)]  The particle landed further to the left than $z=-1$.
		In this case, the height function $H'$ decreases by $1$ at time $t_0$ 
		at $0$ but also at $(-1,-1)$. Besides, this implies that there was no 
		particle at position $z=-1$ at time $t_0^-$. Therefore, we have
		\begin{align*}
		H(0,t_0) & \leq H((-1,-1),t_0) + 1 \\
		& \leq  H'((-1,-1),t_0) + 1 && \text{since $H \leq H'$ on $E_{0,t,1}$} \\
		& = H'((-1,-1),t_0^-) && \text{$H'$ decreases by $1$ at $(-1,-1)$ at 
		time $t_0$} \\
		& = H'(0,t_0^-) - 1 && \text{there was no particle at $z=-1$} \\
		& = H(0,t_0^-) - 1.
		\end{align*}
		Consequently, $H$ also decreased by $1$ at $0$ at time 
		$t_0$.
		
	\end{enumerate}
	
	Now let us show the inductive step, as illustrated in Fig. \ref{fig:cvr}: we assume that the
        result holds for some integer $\ell \geq 1$ and we show
        that it holds also for $\ell+1$. Let us assume that
        $H(.,.) \leq H'(.,.) \text{ on } E_{0,t,\ell+1}$. All
        we have to show is that this inequality is also true on
        $E_{0,t,\ell}$ and conclude by the induction hypothesis.
	
	To show this, we can apply the result  \eqref{eq:casl=1} for $\ell=1$
        to all $ (y,t-\ell - \frac{y_1+y_2}{2})$ with
        $y \in T_\ell$ such that $y_1+y_2$ is even and deduce that
        $H(y,\cdot) \leq H'(y,\cdot)$ on
        $[t-\ell- \frac{y_1+y_2}{2} - 1, t-\ell -
        \frac{y_1+y_2}{2}]$. Next, we apply once more
        \eqref{eq:casl=1} to all
        $ (y,t -\ell - \lfloor\frac{y_1+y_2}{2}\rfloor)$ with
        $y \in T_\ell$ such that $y_1+y_2$ is odd and deduce that
        $H(y,\cdot) \leq H'(y,\cdot)$ on
        $[t-\ell- \lfloor \frac{y_1+y_2}{2}\rfloor - 1, t-\ell -
        \lfloor \frac{y_1+y_2}{2}\rfloor]$.  We get that $H \leq H'$
        on $E_{0,t,\ell}$ which concludes the proof.
	
	\begin{figure}[htbp!]
		\begin{subfigure}{.33\textwidth}
			\centering
			\includegraphics[scale=0.4]{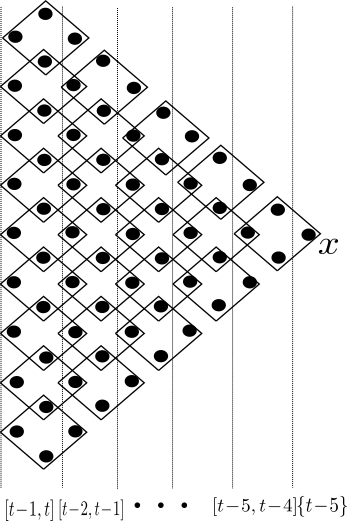}
		\end{subfigure}%
		\begin{subfigure}{.33\textwidth}
			\centering
			\includegraphics[scale=0.4]{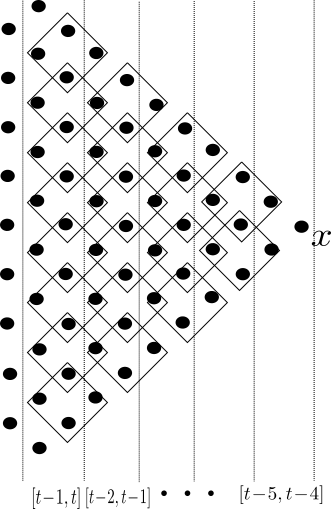}
		\end{subfigure}
		\begin{subfigure}{.33\textwidth}
			\centering
			\includegraphics[scale=0.4]{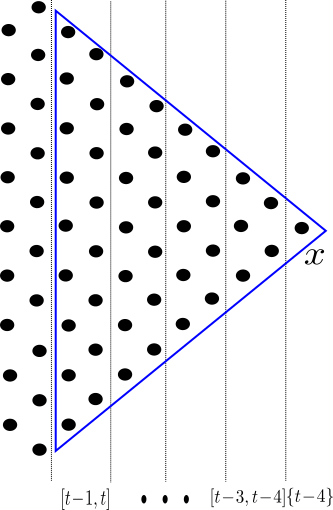}
		\end{subfigure}
		\caption{\footnotesize The induction step from $\ell$
                  to $\ell+1$.  At the bottom, we write the time
                  interval on which $H \leq H'$ for the positions
                  between the two corresponding vertical lines. The
                  inequalities in the intervals of the left figure are
                  equivalent to $H \leq H'$ on $E_{0,t,\ell+1}$ (here
                  with $\ell+1=5$). We begin with applying the case
                  $\ell=1$ in each unit square in the left figure and
                  we deduce the new time intervals on which
                  $H \leq H'$ (shown in the figure in the
                  middle). Next, we apply  the statement with $\ell=1$
                  in each unit square of the middle figure and, as
                  suggested on the right figure, we conclude that
                  $H \leq H'$ on $E_{0,t,\ell}$.}
	\label{fig:cvr}
	\end{figure}
\end{proof}

\section{Compactness}
\label{sec:compactness}

\subsection{Reducing to a simpler  initial condition}
\label{sec:remcondini}

From \cite[Lemma 2.5]{legras2017hydrodynamic} we have that, given
$f \in \bar{\Gamma}_M$ for some integer $M$ as in the statement of
Theorem \ref{theo:principalLJ} and any $L \in \N$, there exists a
natural discrete height function $h_L^f(x)$ that satisfies (a stronger
version of) \eqref{eq:hypcondinitialLJ}: it suffices to set
$h_L^f(x):=\lfloor L f(x/L)\rfloor$. Then, $h_L^f \in \Gamma_M$ and
moreover one has
\begin{equation}\label{eq:condini}
\left|\frac1L h_L^f(\lfloor x L \rfloor) - f(x)\right| \leq \frac1L \qquad 
\forall x \in \R^2,
\end{equation}
which is stronger than \eqref{eq:hypcondinitialLJ}.

A simple consequence of Corollary \ref{coro:finitespeed} is that it is
sufficient to prove Theorem 
\ref{theo:principalLJ} for the initial condition $h_L^f$. 
Indeed, if $(h_L)_{L 
\in \N} \in 
\Gamma_{M}^{\N}$ converges to $f$ in the sense of 
\eqref{eq:hypcondinitialLJ}, then by an immediate consequence of Corollary 
\ref{coro:finitespeed}
, for all 
$T,R \geq  0$,
\begin{align*}
&\limsup_{L \to \infty} \sup_{x \in B(0,R), \, t \in[0,T]}  \left 
|\frac{1}{L} H(\lfloor Lx 
\rfloor,Lt,h_L,W) - \frac{1}{L} H(\lfloor Lx \rfloor,Lt,h_L^f,W) \right| \\
& \leq \limsup_{L \to \infty}
\sup_{x \in  B(0,R + \alpha T)} \left |\frac{1}{L} h_L(\lfloor Lx 
\rfloor) - 
\frac{1}{L} h_L^f(\lfloor Lx \rfloor) \right|\\
&  \leq \limsup_{L \to \infty} \sup_{B(0,R + \alpha T)} \left 
|\frac{1}{L} h_L(\lfloor Lx \rfloor) 
- f(x) \right| +  \sup_{B(0,R + \alpha 
T)}\left|\frac1L h_L^f(\lfloor x L 
\rfloor) - f(x)\right| =0,
\end{align*}
because of \eqref{eq:hypcondinitialLJ} and \eqref{eq:condini}. Therefore, from 
now on we will assume that $h_L=h_L^f$.
\begin{defi}
For every $f\in\bar{\Gamma}$, every realisation of the Poisson process 
$W$ and any scaling parameter $L \in \N$, we define the 
rescaled height function
\begin{equation}\label{eq:Hresc}
H_L(x,t;f,W) := \frac{1}{L} H( \lfloor L x\rfloor, Lt; h_L^f,W).
\end{equation}
\end{defi}



\subsection{Bound on temporal height differences}

The goal of this section is to obtain a control on the temporal height
differences that will be useful for showing compactness of the
sequence $(H_L(\cdot,\cdot;f,W))_{L \in \N}$.

\begin{prop}\label{prop:tconti}
	For almost every $W$, for every integer $M$, there exists a constant $C=C(M)$ 
	such that for all $x \in \R^2$, $t \geq 0$, $\delta>0$ and all $f \in 
	\bar{\Gamma}_M$,
	\begin{equation}
	\limsup_{L \to \infty} \sup_{s \in [t-\delta,t+\delta]}\left| H_L(x,t;f,W) 
	- H_L(x,s;f,W) \right| \leq C \sqrt{ t+\delta}\, \sqrt{\delta}.
	\end{equation}
\end{prop}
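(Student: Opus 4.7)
The plan is to convert the temporal modulus of continuity into a bound on the particle current, localize this bound using the new space-time locality of Proposition \ref{prop:dlocalite}, and then sandwich with dynamics started from simple affine reference profiles for which the hydrodynamic velocity is explicit.

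First, since $t \mapsto J_{\lfloor Lx \rfloor}(t)$ is non-decreasing, $s \mapsto H_L(x, s; f, W)$ is non-increasing, so
\[
\sup_{s \in [t-\delta, t+\delta]} |H_L(x,t;f,W) - H_L(x,s;f,W)| = H_L(x,(t-\delta)_+;f,W) - H_L(x,t+\delta;f,W).
\]
This equals $1/L$ times the number of crossings of $\bar z(\lfloor Lx\rfloor)$ on line $\bar\ell(\lfloor Lx\rfloor)$ in a time window of length at most $2L\delta$. Next, Proposition \ref{prop:dlocalite} with $\ell = \lceil 2L\delta \rceil$ shows that $H(\lfloor Lx\rfloor, \cdot)$ on $[L(t-\delta), L(t+\delta)]$ depends only on $H$ on the space-time triangle $E_{\lfloor Lx\rfloor, L(t+\delta), \lceil 2L\delta \rceil}$ of spatial and temporal size $O(L\delta)$; and, crucially, this statement is monotone in $H$, so it suffices to sandwich $H$ on this triangle between two dynamics issued from simpler reference initial conditions.

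To set up the sandwich, I would choose affine functions $g_\pm \in \bar\Gamma_M$ with slopes $\rho_\pm \in \mathbb T_M$ and additive constants tuned so that $h_L^{g_-} \leq h_L^f \leq h_L^{g_+}$ on a ball of radius $O(L(t+\delta))$ around $\lfloor Lx \rfloor$; since $f \in \bar\Gamma_M$ has slopes in $\mathbb T_M$, these constants can be taken of order $O(L(t+\delta))$. By Corollary \ref{coro:finitespeed} this ordering propagates to $E$ up to time $L(t+\delta)$, and combined with the monotonicity in the initial condition (Proposition \ref{prop:micro}) yields the desired sandwich of $H$ on $E$. For an affine initial condition with slope $\rho$, the corresponding discrete particle configuration is (via the tiling/particle bijection of Section \ref{sec:loz}) a sample from a translation-invariant ergodic Gibbs measure of slope $\rho$, for which the hydrodynamic limit $H_L(y, r; g, W) \to g(y) - v(\rho) r$ holds uniformly on compacts, with $v(\rho) \leq C(M)$ on $\mathbb T_M$. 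Combining the sandwich of initial conditions with this explicit limit yields the claimed $C(M)\sqrt{t+\delta}\sqrt\delta$ bound, in which the $\sqrt{t+\delta}$ factor arises from an optimal balance between the size of the sandwich neighborhood (which controls the additive constants in $g_\pm$) and the length of the time window.

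The main obstacle is precisely the unboundedness of particle jumps: Corollary \ref{coro:finitespeed} alone is not uniform in the initial condition (its speed constant $\alpha(M)$ depends on a priori bounds on inter-particle distances, which are unavailable at positive times), so without Proposition \ref{prop:dlocalite} the localized sandwich above could not be carried out. The deterministic, initial-condition-free cone of dependence provided by Proposition \ref{prop:dlocalite} is what makes the argument go through. A secondary technical point is the hydrodynamic limit for affine stationary initial conditions, which can be handled by standard ergodicity arguments applied to the stationary Gibbs particle configurations.
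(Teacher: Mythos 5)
Your sandwich approach cannot reach the $\sqrt{(t+\delta)\,\delta}$ scaling, and this is a structural obstruction rather than a matter of bookkeeping. To transport the initial inequality $h_L^{g_-}\le h_L^f\le h_L^{g_+}$ to any neighbourhood of $(\lfloor Lx\rfloor,Lt)$ you need it at time zero on a ball of radius of order $L(t+\delta)$: the speed $\alpha(M)$ in Corollary \ref{coro:finitespeed} measures propagation from the initial data, and likewise the triangle $E_{\lfloor Lx\rfloor, L(t+\delta), \lceil 2L\delta\rceil}$ lives at times near $Lt$, so knowing the ordering there forces you to propagate from time zero (there is no shortcut: iterating Proposition \ref{prop:dlocalite} widens the set at unit speed back to time $0$, again a ball of radius of order $Lt$). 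For a generic $f\in\bar\Gamma_M$, with slope varying over $\mathbb T_M$ and no $C^2$ regularity, any two affine functions sandwiching $f$ on a ball of radius $L(t+\delta)$ must differ by $\Theta(L(t+\delta))$. After rescaling and applying the hydrodynamic limit for the affine profiles, the resulting bound is therefore $O(t+\delta)$, not $O(\sqrt{(t+\delta)\delta})$. The claimed ``optimal balance'' does not exist: the radius of the sandwich is dictated by the propagation distance, which scales with $t$, not with $\delta$. Moreover $O(t+\delta)$ does not vanish as $\delta\to0$ at fixed $t>0$, so the conclusion would be useless for the asymptotic equi-continuity in time needed in Lemma \ref{lem:compacite}.

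There is also a circularity problem. Proposition \ref{prop:proline}, the hydrodynamic limit for affine profiles, is formulated in terms of the limit points $H_\infty$, whose existence is exactly what Proposition \ref{prop:compacite} provides, and that proposition relies on Proposition \ref{prop:tconti}. Proving the a.s.\ uniform-on-compacts hydrodynamic limit for stationary initial data from scratch at this stage is not a routine ergodicity exercise; the paper's proof of Proposition \ref{prop:proline} needs a quantitative variance bound for the stationary dynamics. The paper's actual proof of Proposition \ref{prop:tconti} uses no hydrodynamic input at all: it identifies the number of particle crossings of $\bar z(x)$ during $[L(t-\delta),L(t+\delta)]$ with the length of an increasing chain of Poisson clock rings (Lemma \ref{lem:incseq}), localizes to a space-time box of dimensions $L\alpha(t+\delta)\times 2L\delta$ via finite speed of propagation, and invokes the longest-increasing-subsequence bound for a Poisson process in such a rectangle (probability $\le (\tau n)^k/(k!)^2$ for a chain of length $k$). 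That bound is precisely where the square-root scaling $\sqrt{(t+\delta)\delta}$ originates, and it is a mechanism your sandwich argument cannot access.
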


\begin{proof}
	Let $x \in \R^2$, $t \geq 0$, $\delta > 0$, $f \in \bar{\Gamma}_M$. Since 
	the height functions are non-increasing with time, it is enough to get an 
	upper bound on
	$$
	 H_L(x,t-\delta;f,W) 
	- H_L(x,t+\delta;f,W).
	$$  
	
	The next Lemma relates the height differences to increasing subsequences of Poisson points.
	\begin{lem}\label{lem:incseq}
          Let $x \in \Z^2$, $s,\tau \geq0$, $k\in \N$. If
          $H(x,s;h,W)-H(x,s+\tau;h,W) \geq k$ for some $h \in \Gamma$
          and $W$, then there exist increasing subsequences
          $z_0 < \cdots < z_{k-1} \leq \bar{z}(x)$, and
          $s \leq t_0 < \cdots < t_{k-1} \leq s+\tau$ such that
          $(z_i,\bar{\ell}(x),t_i) \in W$ for all $i$.
	
	For all $n\geq k$, the probability that there exists an increasing 
	sequence in $W$ as above with $z_0 > \bar{z}(x) - n$ is upper bounded by 
	$\frac{(\tau n)^k}{(k !)^2}\le \left(\frac{e^2\tau n}{k^2}\right)^k$.
	\end{lem}
	The proof is easy and is postponed to the end of the section. 

	 With Lemma \ref{lem:incseq}, Borel-Cantelli Lemma, 
	 and a rational approximation argument it is possible to show that for 
	 almost every $W$, for all $M,x,t,\delta$ as in Proposition 
	 \ref{prop:tconti}, for all $L$ large enough, for all $h \in \Gamma_M$,
	\begin{equation}
	H(\lfloor Lx \rfloor, L(t-\delta);h,\tilde{W}) - H(\lfloor Lx \rfloor, 
		L(t+\delta);h,\tilde{W}) \leq 2e \sqrt{L 2\delta \times L 
		\alpha (t+\delta)},
	\end{equation}
	with $\tilde{W}$ the restriction of $W$ to $B(\lfloor Lx\rfloor,L \alpha (t+\delta))\times [0,(t+\delta)L]$ (cf. \eqref{eq:RL}). The rest of the proof 
	follows from 
	Proposition 
			\ref{prop:finitespeed} and by setting $C(M) = 2e \sqrt{2\alpha(M)}$.

\end{proof}

	\begin{proof}[Proof of Lemma \ref{lem:incseq}]
          By definition, if $H(x,s;h,W)-H(x,t;h,W) \geq k$, then $k$
          particles crossed $x$ between time $s$ and $t$. We denote by
          $(p,\ell)$ the label of the left-most particle to the right
          of $x$ at time $s$ (i.e $\ell=\bar{\ell}(x)$ and
          $p = \min \{p, \, z_{(p,\ell)}(s) > \bar{z}(x)\}$). Since
          $k$ particles crossed $x$ in the time interval $[s,t]$,
          there exist some time $t_{k-1} \in [s,t]$ where particle
          $(p+k-1,l)$ crossed $x$ and landed at some position
          $z_{k-1} < \bar{z}(x)$. Necessarily,
          $(z_{k-1},\ell,t_{k-1}) \in W$ (since it corresponds to a
          jump) and particle $(p+k-2,\ell)$ was strictly on the left
          of $z_{k-1}$ at time $t_{k-1}$ (otherwise, the jump could not 
          have occurred). This proves the case $k=1$. If $k \geq 2$,
          since particle $(p+k-2,\ell)$ was on the right of $\bar{z}(x)$ (and $z_{k-1}$)
          at time $s$, there exists some
          $t_{k-2} \in [s,t_{k-1}]$ when it jumped strictly on the  left
          of $z_{k-1}$ and landed at some
          $z_{k-2} < z_{k-1}$ with $(z_{k-2},\ell,t_{k-2})\in W$. The proof proceeds by induction.
	 
	 The upper bound on the probability that such a sequence exists is standard 
	 (see e.g \cite[Lemma 4.1]{seppalainen1996}), so we omit it.
	\end{proof}

\subsection{Compactness for almost every realisation of the Poisson process 
\texorpdfstring{$W$}{Lg}}
The goal is to show the following:
\begin{prop}\label{prop:compacite}
For almost every realisation of $W$ the following holds: every subsequence
$(L_k)_{k \in \N}$ contains
 a sub-subsequence $(L_{k_m})_{m \in \N}$ such that for all function 
 $f \in \bar\Gamma$ one has
\begin{equation}
\forall T,R>0 \qquad\sup_{\|x\| \leq R, t \in [0,T] } \left| H_{L_{k_m}}(x,t;f,W) 
- H_{\infty}(x,t;f,W) \right| \underset{m \to \infty}{\longrightarrow} 0,
\end{equation}
for some continuous function $H_{\infty}(\cdot, \cdot; f, W) 
\in \mathcal{C}(\R^2 \times 
\R_+)$.
\end{prop}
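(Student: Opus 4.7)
The plan is to combine the a priori regularity estimates available in Section \ref{sec:compactness} with a standard Arzel\`a-Ascoli / diagonal extraction argument over a countable dense family of initial profiles, and then to upgrade the result to arbitrary $f\in\bar\Gamma$ by invoking the weak locality of Corollary \ref{coro:finitespeed} to obtain continuous dependence of $H_L$ on $f$, uniformly in $L$.

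First I would collect the two regularity estimates. Spatially, the discrete height gradients $\Delta_i h_\eta$ take values in $\{0,1\}$ by \eqref{eq:height}-\eqref{eq:height2}, and this property is preserved by the dynamics since the evolved configuration still lies in $\Omega$. Consequently, for every $t\ge 0$, $x,x'\in\R^2$, $f$, $W$ and $L$,
\begin{equation*}
|H_L(x,t;f,W)-H_L(x',t;f,W)|\le |x-x'|_1+\tfrac{2}{L}.
\end{equation*}
Temporally, Proposition \ref{prop:tconti} provides $1/2$-H\"older equicontinuity (in the $\limsup_L$ sense) for $f\in\bar\Gamma_M$. Combining these two bounds with the $t=0$ identity $H_L(x,0;f,W)=f(x)+O(1/L)$ (valid after the reduction of Section \ref{sec:remcondini} to $h_L=h_L^f$), and using Proposition \ref{prop:tconti} once more to bound $|H_L(x,t;f,W)-H_L(x,0;f,W)|$, we obtain joint equicontinuity and uniform local boundedness of $(H_L(\cdot,\cdot;f,W))_L$ on every compact of $\R^2\times \R_+$, for $L$ sufficiently large (depending on $M$ and on the compact).

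Next I would fix the sub-subsequence. For each integer $M$, choose a countable family $\mathcal D_M\subset\bar\Gamma_M$ dense for uniform convergence on compacts, and set $\mathcal D:=\bigcup_M\mathcal D_M$, which is still countable. Fix $W$ in the full-measure event on which Propositions \ref{prop:finitespeed}, \ref{prop:tconti} and Corollary \ref{coro:finitespeed} all hold. For each fixed $f\in\mathcal D$, Arzel\`a-Ascoli applied on an exhausting sequence of compact subsets of $\R^2\times\R_+$ produces, from any given subsequence $(L_k)$, a convergent sub-subsequence with a continuous limit; a standard diagonal extraction across $\mathcal D$ then yields a single sub-subsequence $(L_{k_m})_m$ such that $H_{L_{k_m}}(\cdot,\cdot;f,W)\to H_\infty(\cdot,\cdot;f,W)$ uniformly on compacts for \emph{every} $f\in\mathcal D$ simultaneously.

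Finally, I would extend the convergence to arbitrary $f\in\bar\Gamma$. Fix $M$ and $f\in\bar\Gamma_M$, and approximate $f$ uniformly on compacts by $(f_n)\subset\mathcal D_M$. Since both $h_L^f$ and $h_L^{f_n}$ lie in $\Gamma_M$, Corollary \ref{coro:finitespeed} combined with \eqref{eq:condini} gives, for all $T,R>0$ and $L$ large enough,
\begin{equation*}
\sup_{|x|\le R,\,t\le T}|H_L(x,t;f,W)-H_L(x,t;f_n,W)|\le \sup_{|x|\le R+\alpha(M)T}|f(x)-f_n(x)|+\tfrac{2}{L}.
\end{equation*}
A routine $3\varepsilon$ argument together with the convergence on $\mathcal D_M$ then shows that $(H_{L_{k_m}}(\cdot,\cdot;f,W))_m$ is Cauchy for uniform convergence on compacts, hence converges to a continuous limit $H_\infty(\cdot,\cdot;f,W)$. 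The main subtlety is precisely this last step: the weak-locality speed $\alpha(M)$ is not uniform in $M$, which forces the stratification $\bar\Gamma=\bigcup_M\bar\Gamma_M$ and obliges us to approximate each $f\in\bar\Gamma_M$ by elements of the \emph{same} stratum $\mathcal D_M$. The stronger space-time locality of Proposition \ref{prop:dlocalite} is not needed at this stage, but will be used later to identify the limit $H_\infty$ with the viscosity solution of \eqref{eq:visc}.
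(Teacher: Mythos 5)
Your proposal follows essentially the same route as the paper: the same spatial and temporal regularity estimates (discrete gradients in $\{0,1\}$ and Proposition \ref{prop:tconti}), a diagonal extraction over a countable dense subset of initial profiles stratified by $M$, and the key observation that Corollary \ref{coro:finitespeed} gives equicontinuity of $H_L$ with respect to $f$ within each stratum $\bar\Gamma_M$, so the convergence extends from the countable dense family to all of $\bar\Gamma$. The only cosmetic point worth flagging is that the equicontinuity bounds are ``asymptotic'' (stated via $\limsup_L$), so one cannot invoke the Arzel\`a--Ascoli theorem verbatim; the paper handles this by first extracting pointwise convergence on a countable dense set of space-time points and then using the asymptotic moduli to get a Cauchy property at all points and uniform convergence on compacts. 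You acknowledge the asymptotic nature of the bounds, so this is a matter of unpacking ``Arzel\`a--Ascoli'' a bit more carefully rather than a genuine gap.
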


\begin{proof}
Let us fix $W$ in the event of probability one on which Proposition 
\ref{prop:tconti} and Corollary \ref{coro:finitespeed} hold simultaneously.
 Since ${\bar\Gamma}$ is the countable union of the $\bar\Gamma_M$ and since 
 $\R^2\times \R_+$ can be written as a countable union of $[-R,R]^2\times 
 [0,T]$, modulo a standard
diagonal extraction procedure, we can restrict ourselves to the case of 
$\bar\Gamma_M$ for a fixed $M$ and of a fixed compact set $[-R,R]^2\times 
[0,T]$.

Let us start by showing the following Lemma where the function $f$ is fixed.
\begin{lem}\label{lem:compacite}
  Let $R,T>0$ and $f \in \bar{\Gamma}_M$.  For almost every
  realisation of $W$, the following holds:  every subsequence
  $(L_k)_{k \in \N}$ contains a sub-subsequence
  $(L_{k_m})_{m \in \N}$ such that
	\begin{equation}
	\sup_{\|x\| \leq R, t \in [0,T] } \left| H_{L_{k_m}}(x,t;f,W) - 
	H_{\infty}(x,t;f,W) \right| \underset{m \to \infty}{\longrightarrow} 0, 
	\end{equation}
for certain a function
  $H_{\infty}(\cdot, \cdot; f, W) \in \mathcal{C}([-R,R]^2
  \times[0,T])$.
\end{lem}
\begin{proof}
  Keeping in mind the ideas of the Arzel\`a-Ascoli Theorem, we will
  first show pointwise boundedness and asymptotic equi-continuity with respect to
  $(x,t)$.
	\begin{enumerate}
		
		\item \emph{Pointwise boundedness:}
		By Proposition \ref{prop:tconti} (with $\delta =t$), the height 
		function grows 
		at 
		most linearly i.e there exists $C'>0$ such that
		\begin{equation}
		\label{eq:gawm}
		\forall x,t \in \R^2 \times \R_+, \qquad \limsup_{L \to \infty} 
		|H_L(x,t;f,W) - 
		\underbrace{H_L(x,0;f,W)}_{\to f(x)}|  \leq C' \, t.
		\end{equation}

		\item \emph{Asymptotic equi-continuity with respect to $x,t$:} 
		Equi-continuity 
		in $x$ is 
		automatic because the spatial discrete gradients of the interface are 
		bounded 
		by 1. Thus,
		\[
		\forall x,y,s \in \R^2 \times \R^2 \times \R_+ \quad  \left| 
		H_L(x,s;f,W) - 
		H_L(y,s;f,W) \right| \leq \|x-y\| + \frac{1}{L}.
		\]
		Moreover, asymptotic equi-continuity with respect to $t$ is a direct 
		consequence of Proposition \ref{prop:tconti} and thus for all $x,t$, 
		and 
		$\delta>0$,
		\begin{equation}\label{eq:asymptxt}
		\limsup_{L\to \infty} \sup_{\substack{y, \|y-x\|\leq\delta \\ s \in 
				[t-\delta,t+\delta]}}\left| 
		H_L(y,s;f,W) - H_L(x,t;f,W) \right| \leq \delta + C \sqrt{t+\delta} 
		\sqrt{\delta}.
		\end{equation}
	\end{enumerate}

        Now, let $(L_k)_{k \in \N}$ be a subsequence. By pointwise boundedness and
        by diagonal extraction, we can find a sub-subsequence
        $(L_{k_m})_{m \in \N}$ such that for all $x,t \in \mathcal S$,
        with $\mathcal S$ a countable dense countable subset of
        $[-R,R]^2 \times [0,T]$, the sequence of real numbers
        $(H_{L_{k_m}}(x,t;f,W))_{m \in \N}$ converges to some limit to
        some $H_{\infty}(x,t;f,W)$.

 Let us extend this limit to the whole 
$[-R,R]^2 \times [0,T]$. By asymptotic equi-continuity \eqref{eq:asymptxt} and 
by density of $\mathcal S$, it is not hard to show that for any $(x,t) \in 
[-R,R]^2 \times [0,T]$, the sequence of real numbers $(H_{L_{k_m}}(x,t;f,W))_{m 
\in \N}$ is a 
Cauchy sequence and thus also converges. 
Consequently, $(H_{L_{k_m}}(\cdot,\cdot;f,W))_{m \in \N}$ converges pointwise 
on $[-R,R]^2 \times [0,T]$
to some $H_{\infty}(\cdot,\cdot;f,W)$  which is 
automatically continuous by taking the limit in \eqref{eq:asymptxt}.

It remains to show that the convergence is uniform.  This is easily
done by using compactness of $[-R,R]^2 \times [0,T]$ and asymptotic
equi-continuity \eqref{eq:asymptxt} so we omit the proof.

\end{proof}

Let us finish the proof of Proposition \ref{prop:compacite}. Since 
$\bar{\Gamma}_M$ is separable for the topology of convergence on all compact 
sets, we can find a countable dense subset that we 
call $\mathcal S'$. By Lemma \ref{lem:compacite} and diagonal extraction, from 
any 
subsequence $(L_k)_{k \in \N}$, we can extract a sub-subsequence $(L_{k_m})_{m 
\in \N}$ such that for any function $g \in \mathcal S'$, 
$(H_{L_{k_m}}(\cdot,\cdot;g,W))_{m \in \N}$ converges uniformly on 
$[-R,R]^2\times[0,T]$ to a continuous function $H_{\infty}(\cdot,\cdot;g,W) \in 
\mathcal C([-R,R]^2 \times [0,T])$.

We are going to extend this limit to any $f \in \bar{\Gamma}_M$ by showing that
$(H_{L_{k_m}}(\cdot,\cdot;f,W))_{m \in \N}$ is a Cauchy sequence in the space 
of functions from $[-R,R]^2 
\times [0,T]$ into $\R$ endowed with the 
uniform convergence which makes it complete. For 
this, we will need to use some equi-continuity with respect to $f$. It follows 
from  Corollary \ref{coro:finitespeed} and from
\eqref{eq:condini} that for all $r \geq 0$ and all $f,g \in \bar{\Gamma}_M$,
\begin{equation} \label{eq:asympcontfg}
\limsup_{L \to \infty} \sup_{\substack{x \in 
 B(0,r) \\ t \in 
		[0,T]} 
}\left| 
H_L(x,t;f,W) - 
H_L(x,t;g,W) \right| \leq \sup_{x \in B(0,r + \alpha T)}|f(x)-g(x)|.
\end{equation}
 Fix $r$ 
large enough such that $B(0,r)$ contains $[-R,R]^2$ and fix $f \in 
\bar{\Gamma}_M$. Since the r.h.s of \eqref{eq:asympcontfg} can be taken 
arbitrarily small by choosing $g \in \mathcal S '$ close enough to $f$ and 
since $(H_{L_{k_m}}(\cdot,\cdot;g,W))_{m \in \N}$ is a Cauchy sequence for the 
uniform convergence on $[-R,R]^2 \times [0,T]$, this is 
also the case for $(H_{L_{k_m}}(\cdot,\cdot;f,W))_{m \in \N}$. In conclusion, 
for any $f \in \bar{\Gamma}_M$,  $(H_{L_{k_m}}(\cdot,\cdot;f,W))_{m \in \N}$ 
converges uniformly on $[-R,R]^2 \times [0,T]$ to a continuous function 
$H_{\infty}(\cdot,\cdot;f,W) \in 
\mathcal C([-R,R]^2 \times [0,T])$. This concludes the proof of Proposition 
\ref{prop:compacite}.
\end{proof}

\section{Identification of the limit}
\label{sec:identification}
All along this section, we will denote 
by 
$H_{\infty}$ any continuous limit obtained by extraction of the sequence of 
rescaled
height functions $(H_L)_{L \in \N}$ as in Proposition \ref{prop:compacite}. In 
order to 
finish 
the proof of Theorem \ref{theo:principalLJ}, we need to show that there is only 
one possible limit $H_{\infty}(\cdot,\cdot;f,W)$that coincides almost surely 
with  the unique 
viscosity solution of \eqref{eq:visc}.

\subsection{Properties of limit points}
First of all, let us show some properties of  $H_{\infty}$, inherited from those of the the microscopic 
dynamics.

\begin{prop}[Vertical translation invariance]\label{prop:invtrans}
For almost every $W$, every $f \in \bar{\Gamma}$ and $c \in \R$,
\[
H_{\infty}(\cdot,\cdot;f+c,W) = H_{\infty}(\cdot,\cdot;f,W) + c.
\]
\end{prop}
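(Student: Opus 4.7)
The plan is to lift the microscopic vertical translation invariance of Proposition \ref{prop:micro}(1) to the continuum limit, using monotonicity (Proposition \ref{prop:micro}(2)) to bridge the gap between integer and real shifts.

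First I would compare the discrete initial conditions. By \eqref{eq:Hresc}, $H_L(\cdot,\cdot;g,W)$ is built from $h_L^g(x) = \lfloor L g(x/L)\rfloor$. Since for any real numbers $a,b$ one has $\lfloor a+b\rfloor\in\{\lfloor a\rfloor+\lfloor b\rfloor, \lfloor a\rfloor+\lfloor b\rfloor+1\}$, applying this with $a=Lf(x/L)$ and $b=Lc$ gives, pointwise for $x\in\Z^2$,
\[
h_L^f(x)+\lfloor Lc\rfloor \;\le\; h_L^{f+c}(x) \;\le\; h_L^f(x)+\lfloor Lc\rfloor+1.
\]
The two sandwiching functions have the same discrete gradients as $h_L^f$ (height functions being defined up to an additive integer), hence they all belong to the same class $\Gamma_M$ as long as $f\in\bar\Gamma_M$.

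Next I would combine microscopic monotonicity with translation invariance by the integers $\lfloor Lc\rfloor$ and $\lfloor Lc\rfloor+1$, i.e.\ Proposition \ref{prop:micro}, to deduce
\[
H(x,t;h_L^f,W)+\lfloor Lc\rfloor \;\le\; H(x,t;h_L^{f+c},W) \;\le\; H(x,t;h_L^f,W)+\lfloor Lc\rfloor+1
\]
for every $x\in\Z^2$ and $t\ge 0$. Dividing by $L$ and using the definition of $H_L$ yields
\[
H_L(x,t;f,W)+\tfrac{\lfloor Lc\rfloor}{L} \;\le\; H_L(x,t;f+c,W) \;\le\; H_L(x,t;f,W)+\tfrac{\lfloor Lc\rfloor+1}{L}.
\]

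Finally, since by Proposition \ref{prop:compacite} the extracted sub-subsequence $(L_{k_m})$ gives convergence $H_{L_{k_m}}(\cdot,\cdot;g,W)\to H_\infty(\cdot,\cdot;g,W)$ simultaneously for every $g\in\bar\Gamma$ (hence both for $g=f$ and $g=f+c$), I would pass to the limit $m\to\infty$ in the above sandwich. Both extremes tend to $H_\infty(x,t;f,W)+c$ because $\lfloor Lc\rfloor/L\to c$, which gives the claimed identity pointwise, and by continuity of $H_\infty$ everywhere. There is no real obstacle; the only subtlety is the passage from a real shift $c$ at the macroscopic scale to the integer shift $\lfloor Lc\rfloor$ at the microscopic scale, which is exactly what the monotonicity sandwich absorbs.
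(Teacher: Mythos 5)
Your proof is correct and follows essentially the same route as the paper's: sandwich $h_L^{f+c}$ between $h_L^f+\lfloor Lc\rfloor$ and $h_L^f+\lfloor Lc\rfloor+1$, apply microscopic monotonicity and integer translation invariance (Proposition \ref{prop:micro}), then pass to the limit along the sub-subsequence. No comments needed.
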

\begin{proof}
	From the definition of $h_L^f=\lfloor L f(x/L) \rfloor$, we observe 
	that $h_L^f + \lfloor Lc \rfloor \leq h_L^{f+c} \leq h_L^f + \lfloor Lc 
	\rfloor +1$. By Proposition \ref{prop:micro}, we deduce that
	\[
	H_L(\cdot,\cdot;f,W) + \frac{\lfloor Lc \rfloor}{L} \leq  
	H_L(\cdot,\cdot;f+c,W) \leq H_L(\cdot,\cdot;f,W) + \frac{\lfloor Lc \rfloor 
	+1}{L},
	\]
	which concludes the proof by taking the limit when $L$ goes to infinity.
\end{proof}

\begin{prop}[Weak locality] \label{prop:localite} For almost 
all $W$, all $R,T>0$, all integer $M$ and all $f,g \in \bar{\Gamma}_M$,
\begin{equation}
\sup_{{x \in B(0,R)}, t \in [0,T] }\left| H_{\infty}(x,t;f,W) - 
H_{\infty}(x,t;g,W) \right| \leq \sup_{x \in B(0,R+ \alpha 
T)}|f(x)-g(x)|.
\end{equation}
\end{prop}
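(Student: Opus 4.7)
The statement is essentially the scaling limit of the microscopic weak locality (Corollary \ref{coro:finitespeed}), so the plan is to pass to the limit directly in that inequality, using the reduction to the canonical initial condition $h_L^f$ from Section \ref{sec:remcondini}.

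Fix $W$ in the full-measure event on which Proposition \ref{prop:compacite} and Corollary \ref{coro:finitespeed} both hold. For given $f,g\in\bar\Gamma_M$ and given $R,T>0$, a diagonal extraction lets me assume the sub-subsequence $(L_{k_m})$ of Proposition \ref{prop:compacite} is common to $f$ and $g$, so that $H_{L_{k_m}}(\cdot,\cdot;f,W)$ and $H_{L_{k_m}}(\cdot,\cdot;g,W)$ converge uniformly on $[-R,R]^2\times[0,T]$ to $H_\infty(\cdot,\cdot;f,W)$ and $H_\infty(\cdot,\cdot;g,W)$ respectively.

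Recall $H_L(x,t;f,W)=\tfrac1L H(\lfloor Lx\rfloor,Lt;h_L^f,W)$ with $h_L^f,h_L^g\in\Gamma_M$. Pick $R'>R$ and $T'>T$; for $L$ large enough, $\lfloor Lx\rfloor\in B(0,LR')$ and $Lt\le LT'$ for all $(x,t)\in B(0,R)\times[0,T]$. Corollary \ref{coro:finitespeed} (applied to $R',T'$ in place of $R,T$) then yields, for all such $L$,
\[
\sup_{\|x\|\le R,\,t\in[0,T]}\bigl|H_L(x,t;f,W)-H_L(x,t;g,W)\bigr|\;\le\;\frac{1}{L}\sup_{y\in B(0,L(R'+\alpha T'))}|h_L^f(y)-h_L^g(y)|.
\]
The bound $|h_L^f(y)-Lf(y/L)|\le 1$ (and the analogous one for $g$), which follows from the definition $h_L^f(y)=\lfloor L f(y/L)\rfloor$ and \eqref{eq:condini}, upgrades the right-hand side to $\sup_{x\in B(0,R'+\alpha T')}|f(x)-g(x)|+2/L$.

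Passing to the limit along $(L_{k_m})$, uniform convergence on $[-R,R]^2\times[0,T]$ converts the left-hand side into $\sup_{\|x\|\le R,\,t\in[0,T]}|H_\infty(x,t;f,W)-H_\infty(x,t;g,W)|$, while the right-hand side tends to $\sup_{x\in B(0,R'+\alpha T')}|f(x)-g(x)|$. Since $R'>R$ and $T'>T$ were arbitrary and $f-g$ is continuous, letting $R'\searrow R$ and $T'\searrow T$ yields the desired bound with $B(0,R+\alpha T)$ on the right. There is no substantial obstacle: once Corollary \ref{coro:finitespeed} and the uniform convergence of Proposition \ref{prop:compacite} are in hand, the only care required is bookkeeping with the $\lfloor Lx\rfloor$ rounding and the $\Or(1/L)$ discrepancy between $h_L^f$ and $Lf(\cdot/L)$, both of which are absorbed by taking $R'>R$, $T'>T$ and sending them back at the end.
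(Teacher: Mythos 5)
Your proof is correct and follows the paper's own route: the microscopic estimate you rederive from Corollary \ref{coro:finitespeed} and \eqref{eq:condini} is precisely inequality \eqref{eq:asympcontfg} established in the proof of Proposition \ref{prop:compacite}, and the paper then simply passes to the limit, as you do. (One minor remark: Proposition \ref{prop:compacite} already produces a sub-subsequence working simultaneously for all $f\in\bar\Gamma$, so the extra diagonal extraction you invoke to make it common to $f$ and $g$ is unnecessary.)
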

\begin{proof}
It suffices to take the limit when $L$ goes to infinity of 
\eqref{eq:asympcontfg}.
\end{proof}

Now, we are going to show a continuous version of Proposition 
\ref{prop:dlocalite}. We first have to introduce some notations. For all $x \in 
\R^2$ and $t>\delta>0$, we define $\bar{T}_{\delta}, \bar{E}_{\delta}$ and 
$\bar{E}_{x,t,\delta}$ the continuous versions of $T_\ell$, $E_\ell$ et $E_{x,t,\ell}$ by
\begin{equation}
\begin{aligned}
\bar{T}_{\delta} & := \{ y \in \R^2, \quad y_1 \leq 0, \: y_2 \leq 0 , \: -2 \delta \leq y_1 + y_2 \leq 0\}\subseteq \mathbb R^2  \\
\bar{E}_{\delta} & := \{ (y_1,y_2,-\delta - (y_1+y_2)/2), \quad y \in \bar{T}_{\delta}  \} \subseteq \mathbb R^2\times \mathbb R
\\
\bar{E}_{x,t,\delta} & := (x,t) + \bar{E}_{\delta} \subseteq \mathbb R^2\times \mathbb R_+.
\end{aligned}
\end{equation}

\begin{prop}[Space-time locality]\label{prop:localitecontinue}
For almost every $W$, every $x \in \R^2$, every $t>\delta>0$ and every $f,g \in 
\bar{\Gamma}$,
\[
\left( H_{\infty}(\cdot,\cdot;f,W) \leq H_{\infty}(\cdot,\cdot;g,W) \: \text{on } \bar{E}_{x,t,\delta} \right) \Rightarrow H_{\infty}(x,t;f,W)  \leq H_{\infty}(x,t;g,W)
\]
\end{prop}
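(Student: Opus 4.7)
The plan is to transfer the continuous hypothesis to the microscopic scale via the uniform convergence $H_L\to H_\infty$ of Proposition~\ref{prop:compacite}, invoke the discrete space--time locality Proposition~\ref{prop:dlocalite}, then rescale and pass to the limit. The small loss produced by the transfer will be absorbed using an auxiliary parameter $\eps$ together with the vertical translation invariance of the microscopic dynamics (Proposition~\ref{prop:micro}).

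First, I would fix $W$ in the intersection of the full-measure events of Propositions~\ref{prop:micro},~\ref{prop:dlocalite} and~\ref{prop:compacite}, extract a subsequence $(L_{k_m})_m$ along which $H_{L_{k_m}}(\cdot,\cdot;h,W)$ converges uniformly on compacts to $H_\infty(\cdot,\cdot;h,W)$ for every $h\in\bar\Gamma$, and fix $x,t,\delta,f,g$ as in the statement as well as $\eps>0$. Setting $\ell_L:=\lfloor L\delta\rfloor$, a direct inspection of the definitions shows that the rescaled discrete set $L^{-1}E_{\lfloor Lx\rfloor,Lt,\ell_L}$ lies in an $O(1/L)$ neighbourhood of the continuous set $\bar E_{x,t,\delta}$. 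Combining the hypothesis, continuity of $H_\infty$, and uniform convergence on a compact neighbourhood of $\bar E_{x,t,\delta}$ yields, for all $m$ large enough,
\[
H_{L_{k_m}}(\cdot,\cdot;f,W) \le H_{L_{k_m}}(\cdot,\cdot;g,W) + \eps \qquad \text{on } L_{k_m}^{-1}E_{\lfloor L_{k_m}x\rfloor,\, L_{k_m} t,\, \ell_{L_{k_m}}}.
\]

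Unrescaling, and writing $L$ for $L_{k_m}$, this reads $H(\cdot,\cdot;h_L^f,W)\le H(\cdot,\cdot;h_L^g,W)+\lceil L\eps\rceil$ on $E_{\lfloor Lx\rfloor,Lt,\ell_L}$. Setting $\tilde h_L:=h_L^g+\lceil L\eps\rceil\in\Gamma$, vertical translation invariance rewrites this as an honest inequality $H(\cdot,\cdot;h_L^f,W)\le H(\cdot,\cdot;\tilde h_L,W)$ on the same set. Since $t>\delta$, we have $Lt\geq \ell_L$ eventually, and Proposition~\ref{prop:dlocalite} applied to $h_L^f,\tilde h_L\in\Gamma$ at the point $\lfloor Lx\rfloor$ with parameter $\ell_L$ gives
\[
H(\lfloor Lx\rfloor, Lt; h_L^f, W) \le H(\lfloor Lx\rfloor, Lt; h_L^g, W) + \lceil L\eps\rceil.
\]
Dividing by $L$ and passing to the limit yields $H_\infty(x,t;f,W)\le H_\infty(x,t;g,W)+\eps$, and letting $\eps\to0$ concludes for the fixed parameters.

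To upgrade to a single full-measure event valid for all choices of $(x,t,\delta,f,g)$ simultaneously, I would combine the above with continuity of $H_\infty$ in $(x,t)$ and density of the countable set $\mathcal S'$ used in Proposition~\ref{prop:compacite}, together with the weak locality Proposition~\ref{prop:localite} to extend in $f,g$; this reduces the verification to a countable collection of tuples. The only mildly delicate step is the bookkeeping needed to verify that $L^{-1}E_{\lfloor Lx\rfloor,Lt,\ell_L}$ sits inside a neighbourhood of $\bar E_{x,t,\delta}$ with $L$-independent, shrinking error, so that the uniform convergence can transfer the hypothesis at the cost of only an $\eps$ loss. Beyond this, the argument is essentially a direct translation of the discrete Proposition~\ref{prop:dlocalite}.
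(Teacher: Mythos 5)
Your proposal is correct and follows essentially the same route as the paper: transfer the hypothesis from $\bar E_{x,t,\delta}$ to the discrete set $E_{\lfloor Lx\rfloor,Lt,\lfloor L\delta\rfloor}$ via uniform convergence and a small enlargement, absorb the $\eps$ loss by vertical translation invariance, apply the discrete Proposition~\ref{prop:dlocalite}, and pass to the limit. The only cosmetic difference is that the paper shifts $g$ by $3\eps$ in $\bar\Gamma$ and uses the continuous translation invariance of Proposition~\ref{prop:invtrans}, whereas you shift $h_L^g$ by $\lceil L\eps\rceil$ at the microscopic level using Proposition~\ref{prop:micro}; these are equivalent.
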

\begin{proof}
The proof relies on Proposition \ref{prop:dlocalite}
and a continuity argument. Assume that $H_{\infty}(\cdot,\cdot;f,W) \leq 
H_{\infty}(\cdot,\cdot;g,W)$ on $\bar{E}_{x,t,\delta}$. Fix $\eps>0$.  By 
continuity of these functions and by compactness of $\bar{E}_{x,t,\delta}$, 
there exists $\eta>0$ such that
\[
H_{\infty}(\cdot,\cdot;f,W) \leq H_{\infty}(\cdot,\cdot;g,W)+\eps \: \text{on 
$\bar{E}_{x,t,\delta}^{\eta}$},
\]
where $\bar{E}_{x,t,\delta}^{\eta}$ is the set of points a distance less than 
$\eta$ from $\bar{E}_{x,t,\delta}$.
By compactness of $\bar{E}_{x,t,\delta}^{\eta}$ and by uniform convergence on 
all compact sets of the sequences $H_L(\cdot,\cdot;f,W)$ and 
$H_L(\cdot,\cdot;g+3\eps,W)$, for all $L$ large enough, the following 
inequalities hold on $\bar{E}_{x,t,\delta}^{\eta}$:
\begin{align*}
H_L(\cdot,\cdot;f,W) & \leq H_{\infty}(\cdot,\cdot;f,W) + \eps  \leq H_{\infty}(\cdot,\cdot;g,W) + 2\eps \\
& = H_{\infty}(\cdot,\cdot;g + 3\eps,W) - \eps \leq H_{L}(\cdot,\cdot;g+3\eps,W),
\end{align*}
where we used Proposition \ref{prop:invtrans} in the equality in the second 
line. Then, since we enlarged $\bar{E}_{x,t,\delta}$ by $\eta>0$, it is not 
hard to check that, for all $L$ large enough,
$$E_{\lfloor Lx \rfloor,Lt,\lfloor L\delta \rfloor} \subseteq \{(\lfloor Ly 
\rfloor,Ls), \, (y,s) \in \bar{E}_{x,t,\delta}^{\eta}\}, $$
 and thus $H_L(\cdot,\cdot;h_L^f,W) \leq H_L(\cdot,\cdot;h_L^{g+3\eps},W)$ on $E_{\lfloor Lx \rfloor,Lt,\lfloor L\delta \rfloor}$. By Proposition \ref{prop:dlocalite}, for all $\eps>0$, for all $L$ large enough, we have
\[
H(\lfloor Lx \rfloor,Lt;h_L^f,W) \leq H(\lfloor Lx \rfloor,Lt;h_L^{g+3\eps},W).
\]
Dividing by $L$ and taking the limit when $L\to \infty$ yields that for all $\eps>0$,
\[
H_{\infty}(x,t;f,W)  \leq H_{\infty}(x,t;g+3\eps,W),
\]
and the proof follows by letting $\eps$ go to $0$ and using Proposition 
\ref{prop:invtrans} again.
\end{proof}

Finally, we need to need the hydrodynamic limit in the easy case where the 
initial profile is 
linear.

\begin{prop}[Hydrodynamic limit for linear profiles]\label{prop:proline}
	For  $\rho \in \mathbb T$, we let $f_{\rho}(x):= \rho 
	\cdot x$. For almost every $W$,
	\begin{equation}\label{eq:profline}
	H_{\infty}(\cdot,t;f_{\rho},W) = f_{\rho} - v(\rho) \, t \qquad  
	\forall t \geq 0,\: \forall \rho \in \mathbb T.
	\end{equation}
\end{prop}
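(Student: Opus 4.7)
The approach is to reduce the statement for the deterministic initial condition $h_L^{f_\rho}$ to the corresponding statement for a random initial condition distributed according to the translation-invariant, ergodic Gibbs measure $\pi_\rho$ of slope $\rho$, for which stationarity and ergodic arguments are directly available.

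Recall from \cite{BF,Toninelli2+1} that $\pi_\rho$ is stationary for the Borodin-Ferrari dynamics and that the mean particle current across a fixed horizontal position under $\pi_\rho$ equals exactly $v(\rho)$ (this computation, via determinantal techniques, is in fact the origin of the explicit formula \eqref{eq:vitesse}). I would sample $\eta^\pi\sim\pi_\rho$ independently of $W$ and let $h^\pi$ be the associated height function, normalized by $h^\pi(0)=0$. Since the joint law of $(\eta^\pi,W)$ is invariant under spatial translations, Birkhoff's multi-dimensional ergodic theorem applied to the current functional $J_x(t)$ of \eqref{eq:Hxt} yields, almost surely,
\begin{equation*}
\frac{1}{L}\,H(\lfloor Lx\rfloor, Lt; h^\pi, W) \;\longrightarrow\; \rho\cdot x - v(\rho)\,t
\end{equation*}
for every $(x,t)$, and in fact uniformly on compact sets of $\R^2\times\R_+$: spatial equi-continuity follows from the unit bound on the discrete gradients of $h^\pi$, and temporal equi-continuity is provided by Proposition \ref{prop:tconti}.

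To transfer this limit to the deterministic profile $h_L^{f_\rho}$, I would use a sandwiching argument. Fix $\epsilon>0$ and $R,T>0$. By the ergodic theorem applied to the gradient field of $h^\pi$ under $\pi_\rho$, almost surely $|h^\pi(y) - \rho\cdot y|\leq \epsilon L$ uniformly on $B(0,L(R+\alpha T))$ for all $L$ large, where $\alpha=\alpha(M)$ is the propagation constant of Proposition \ref{prop:finitespeed}. Combined with the elementary bound $|h_L^{f_\rho}(y) - \rho\cdot y|\leq 1$ coming from $h_L^{f_\rho}(y)=\lfloor Lf_\rho(y/L)\rfloor$, this yields $|h^\pi - h_L^{f_\rho}|\leq 2\epsilon L$ on the enlarged ball. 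Monotonicity and vertical translation invariance (Proposition \ref{prop:micro}), together with weak locality (Corollary \ref{coro:finitespeed}), then give $\sup_{x\in B(0,R),\,t\in[0,T]}|H_L(x,t;h^\pi,W) - H_L(x,t;h_L^{f_\rho},W)|\leq 2\epsilon$. Letting $\epsilon\to 0$ and combining with the previous paragraph concludes the proof.

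The principal technical obstacle is that Corollary \ref{coro:finitespeed} requires both initial conditions to lie in the same $\Gamma_M$, whereas $\pi_\rho$-typical configurations have inter-particle gaps that fluctuate on all scales and do not belong to any fixed $\Gamma_M$. One way around this is to truncate $h^\pi$ on a large ball and glue a $\Gamma_M$-admissible extension outside, verifying via the ergodic theorem that the modification becomes negligible after rescaling on the region of interest; a cleaner alternative is to perform the sandwiching at the level of the limit $H_\infty$ using the continuous space-time locality of Proposition \ref{prop:localitecontinue}, whose comparison domain is determined by $(x,t)$ alone and is independent of any $M$-uniform control on inter-particle distances.
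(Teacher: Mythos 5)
Your strategy — compare the deterministic profile $h_L^{f_\rho}$ to a stationary one sampled from $\pi_\rho$ — is the same as the paper's, but the two technical steps you rely on do not go through as stated, and the paper handles both differently.

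First, you invoke Birkhoff's ergodic theorem to deduce $L^{-1}J_{\lfloor Lx\rfloor}(Lt)\to v(\rho)\,t$ a.s.\ under the stationary law. This is a law of large numbers for the \emph{time-averaged} current, and it would require that the stationary process $(\eta^{\pi}(t))_{t\ge 0}$ be ergodic as a dynamical system under time shifts; that is a nontrivial property of the Borodin--Ferrari dynamics and is not established in the literature the paper relies on. (Spatial translation-ergodicity of $\pi_\rho$, which \emph{is} known, gives you $L^{-1}h^\pi(\lfloor Lx\rfloor)\to\rho\cdot x$, but says nothing about $J$.) The paper bypasses this entirely: it cites the exact identity $\E[H^{stat}(0,t)-H^{stat}(0,0)]=-v(\rho)t$ from \cite{Toninelli2+1,MR3734414} together with the variance bound $\Var(H^{stat}(0,t)-H^{stat}(0,0))=\Or(\log t)$ from \cite[Th.~2.2]{MR4033679}, and concludes by Chebyshev and Borel--Cantelli. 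This avoids any ergodicity input and gives the almost-sure statement directly.

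Second, you correctly identify the obstacle that $\pi_\rho$-typical configurations do not lie in any fixed $\Gamma_M$, so Corollary \ref{coro:finitespeed} cannot be applied to the pair $(h^\pi,h_L^{f_\rho})$, but the workarounds you sketch are not adequate. Truncating $h^\pi$ and gluing a $\Gamma_M$-admissible extension is not obviously possible while preserving admissibility (interlacing is a global constraint), and the ``cleaner alternative'' of using Proposition \ref{prop:localitecontinue} at the level of $H_\infty$ does not apply because $H_\infty(\cdot,\cdot;\cdot,W)$ is defined only for continuous initial profiles in $\bar\Gamma$, not for random discrete configurations. The paper proceeds one-sidedly instead: it lifts $h^{stat}$ by $\delta L/2$ so that $h^{stat}\ge h_L^{f_\rho}$ on a large ball $\pi_\rho$-a.s., applies \emph{local} monotonicity (\cite[Th.~5.10]{legras2017hydrodynamic}) with the Poisson process restricted to $B(0,LC)\times[0,Lt]$, and then removes the restriction using Proposition \ref{prop:finitespeed} for the $\Gamma_M$ side and a separate finite-speed-of-propagation result tailored to stationary initial data, \cite[Prop.~5.13]{legras2017hydrodynamic}, for the $h^{stat}$ side. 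This yields the upper bound $H^+_\infty\le f_\rho-v(\rho)t$; the lower bound is obtained analogously and no two-sided $\Gamma_M$-uniform locality is ever needed.

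In short, your proposal correctly identifies the stationary comparison as the key idea, but the ergodicity step and the locality transfer both have genuine gaps; the paper's use of the second-moment bound and of one-sided local monotonicity with \cite[Prop.~5.13]{legras2017hydrodynamic} is precisely what makes the argument close.
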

\begin{proof}
	Consider first   $\rho$ in the interior of $\mathbb{T}$ and let
	$$
	H^+_\infty(x,t;f_\rho,W):=\limsup_{L\to\infty}H_L(x,t;f_\rho,W)
	$$
	and analogously with the liminf for $H^-_\infty(x,t;f_\rho,W)$.  We will 
	prove that,
	for any given $x,t$, one has $W$-a.s.
	\begin{equation}\label{eq:H+bound}
	H^+_\infty(x,t;f_\rho,W)\le f_{\rho}(x) - v(\rho)t,
	\end{equation}
	(an
	analogous statement holds for $H^-_\infty(x,t;f_\rho,W)$).  Given
	this, it is then easy to deduce that \eqref{eq:profline} holds, $W$-a.s.,
	simultaneously for all $x$ and for all $t$, using the continuity of 
	$(x,t) \mapsto H_{\infty}(x,t;f_{\rho},W)$. Moreover, by continuity of 
	$\rho 
	\mapsto v(\rho)$ and $\rho \mapsto 
	H_{\infty}(x,t;f_{\rho},W)$ on $\mathbb T$  (thanks to Proposition 
	\ref{prop:localite}), we can also deduce that \eqref{eq:profline} holds 
	simultaneously for 
	all 
	$\rho \in \mathbb T$.

	
	Let us fix $\rho \in \overset{\circ}{\mathbb T}$ and $x,t$ (we assume 
	without loss of generality that $x=0$) and let us prove 
	that  \eqref{eq:H+bound} holds $W$-as.
	First of all, we replace the initial condition
	$h_L^{f_\rho}$ by a (random) initial condition, that we call
	$h^{stat}$ (``stat'' for ``stationary''), sampled from the
	stationary measure $\pi_\rho$ with average slope $\rho$, with height
	fixed to $\delta L/2$ at position $x=0$. We recall from
	\cite{Toninelli2+1} that $\pi_\rho$ corresponds to the translation
	invariant Gibbs measure on rhombus tilings of the plane with average
	slope $\rho$ \cite{kenyon2009lectures} and that it is a time-stationary
	measure for the interface gradients. Note that the $L$-dependence of
	$h^{stat}$ is trivial: only the height offset is $L$-dependent. We
	call $H^{stat}_L(\cdot,t;\rho,W)$ the corresponding (space-time
	rescaled) height function at (macroscopic) time $t$, in analogy with
	\eqref{eq:Hresc}.  It is well known that, for any $R>0$, one has
	$\pi_\rho$-a.s.  that
	\begin{eqnarray}
	\label{eq:hstath}
	h^{stat}\ge h^{f_\rho}_L= \lfloor\rho\cdot x\rfloor \; \text{ on } \; 
	B(0,LR), \; \text{for $L$ large enough}.
	\end{eqnarray}
	Given a positive constant $C$, let us consider the localised dynamics 
	where the Poisson clocks
	outside the ball $B(0,LC)$ are turned off in the time interval
	$[0,L t]$, i.e., where $W$ is replaced by
	$\tilde W := W\cap \left(B(0,LC)\times [0,L t]\right)$. By local 
	monotonicity (more 
	precisely, apply \cite[Th. 
	5.10]{legras2017hydrodynamic}) and by \eqref{eq:hstath}, we deduce that
	$$
	 H^{stat}_L(0,t;\rho, \tilde W) \geq  H_L(0,t;f_\rho, \tilde W), \; 
	 \text{for 
	$L$ large enough}.
	$$
	Now, from Proposition 
	\ref{prop:finitespeed} 
	we have that $W$-a.s.,
	$$
	H_L(0,t;f_\rho,\tilde W)=    H_L(0,t;f_\rho, W) , \; \text{for 
		$L$ large enough},
	$$
	if $C$ is chosen large enough.
	Similarly, one has almost surely with respect to the joint law of
	$W$ and of the initial condition $h^{stat}$,
	$$
	H^{stat}_L(0,t;\rho, \tilde W)=     H^{stat}_L(0,t;\rho,W) , \; \text{for 
		$L$ large enough}.
	$$
	 This follows (again, for $C$ 
	large
	enough) from \cite[Prop. 5.13]{legras2017hydrodynamic}. In
	conclusion, we get that almost surely with respect to the joint law of
	$W$ and of $h^{stat}$,
	\begin{equation}
	 H^{stat}_L(0,t;\rho,W) \geq H_L(0,t;f_\rho,W), \; \text{for 
		$L$ large enough}
	\end{equation}
	and therefore it is enough to show \eqref{eq:H+bound} for $H^{stat}$ 
	instead of 
	$H$.
	
	By Borel-Cantelli Lemma, it is enough to prove the summability in $L$ 
	of 
	\begin{equation}
	\begin{aligned}
	& \mathbb P(H^{stat}_L(0,t;\rho,W)\ge  - v(\rho)t +\delta)\\
	& =     \mathbb 
	P(H^{stat}_L(0,t;\rho,W)- H^{stat}_L(0,0;\rho,W)\ge  - v(\rho)t +\delta/2),
	\end{aligned}
	\end{equation}
	for any $\delta > 0$  where now $\mathbb P$ is the joint law of the process 
	and of 
	the initial condition.
	On the one hand,  it follows from 
	\cite{Toninelli2+1,MR3734414} that $ - v(\rho)t$ is nothing but the 
	average growth:
	\begin{equation}
	\Esp \left[ 
	H^{stat}_L(0,t;\rho,W)- H^{stat}_L(0,0;\rho,W) \right]=  - v(\rho) t.
	\end{equation}
	On the other hand, \cite[Th. 2.2]{MR4033679} showed that $\Var \left(H^{stat}(0,t,\rho) - 
	H^{stat}(0,0,\rho)  \right)= \mathrm{O}(\log t)$ which implies that
	\begin{equation}
	\Var \left( H^{stat}_L(0,t;\rho,W)- H^{stat}_L(0,0;\rho,W)  \right) =   
	\underset{L \to \infty}{\mathrm{O}}\left( L^{-2} \log L \right)
	\end{equation}
	which is summable in $L$. The rest of the proof follows from Chebyshev's 
	inequality.
	\end{proof}

\subsection{Viscosity solution}

Let us show the following:
\begin{prop}\label{prop:soluviscosite}
For almost every $W$, the following holds: for every $f \in \bar\Gamma$, every $(x,t) \in \R^2 \times \R_{>0}$ 
and every smooth function $\varphi$ of space and time such that $H_{\infty}(x,t;f,W) = 
\varphi(x,t)$  
and $H_{\infty}(\cdot,\cdot;f,W) \leq \varphi$ (resp. $\geq \varphi$) in a 
neighborhood of $(x,t)$,
\begin{equation}\label{eq:visceq}
\text{if } \nabla \varphi(x,t) \in \mathbb T, \text{ then } 
\partial_t\varphi(x,t) + v(\nabla \varphi (x,t))  \leq  0 \quad (\text{resp.} 
\geq 0).
\end{equation}
\end{prop}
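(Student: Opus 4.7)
My plan is to argue by contradiction, combining the explicit hydrodynamic limit for linear initial profiles (Proposition \ref{prop:proline}) with space-time locality (Proposition \ref{prop:localitecontinue}). I describe the subsolution case; the supersolution case is symmetric.

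Suppose for contradiction that $M := \partial_t \varphi(x, t) + v(\rho) > 0$, where $\rho := \nabla \varphi(x, t) \in \mathbb T$. A preliminary observation is that $\partial_t \varphi(x, t) \leq 0$ holds automatically: $H_{\infty}(x, \cdot\, ; f, W)$ is non-increasing in time (inherited from the microscopic monotonicity of $H$), so for $s < t$ close to $t$ one has $\varphi(x, s) \geq H_{\infty}(x, s; f, W) \geq H_{\infty}(x, t; f, W) = \varphi(x, t)$, forcing $\partial_t \varphi(x, t) \leq 0$. Together with $M > 0$ this yields $v(\rho) > 0$, so $\rho$ lies in the interior of $\mathbb T$.

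The goal is to find $\rho' \in \mathbb T$ and a constant $c_0$ such that the linear initial datum $g(y) := \rho' \cdot y + c_0$, whose hydrodynamic evolution by Propositions \ref{prop:proline} and \ref{prop:invtrans} is $H_{\infty}(y, s; g, W) = G(y, s) := \rho' \cdot y + c_0 - v(\rho') s$, simultaneously satisfies
\[
H_{\infty}(\cdot, \cdot; f, W) \leq G \quad \text{on } \bar{E}_{x,t,\delta} \qquad \text{and} \qquad G(x, t) < \varphi(x, t),
\]
for some small $\delta > 0$. Proposition \ref{prop:localitecontinue} then forces $H_{\infty}(x, t; f, W) \leq G(x, t) < \varphi(x, t) = H_{\infty}(x, t; f, W)$, the desired contradiction. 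Setting $z := y - x$ and using the relation $s - t = -\delta - (z_1 + z_2)/2$ valid on $\bar{E}_{x,t,\delta}$, a first-order Taylor expansion of $\varphi$ at $(x, t)$ yields on $\bar{E}_{x,t,\delta}$
\[
\varphi(y, s) - G(y, s) = \bigl[\varphi(x, t) - G(x, t) - \tau \delta\bigr] + \beta_1 z_1 + \beta_2 z_2 + O(\delta^2),
\]
with $\tau := \partial_t \varphi(x, t) + v(\rho')$ and $\beta_i := \rho_i - \rho'_i - \tau/2$. I then choose $\rho' = s \rho$ for a scalar $s \in (0, 1)$: the map $s \mapsto \tau(s) := \partial_t \varphi(x, t) + v(s \rho)$ is continuous with $\tau(0) \leq 0$ and $\tau(1) = M > 0$, so by the intermediate value theorem there exists $s^* \in (0, 1)$ with $\tau(s^*) = 0$. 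For $s$ slightly above $s^*$ the value $\tau(s) > 0$ is small while $\beta_i = (1 - s)\rho_i - \tau(s)/2 > 0$ for both $i$, because $(1 - s)\rho_i$ stays bounded away from $0$. Since $z_i \leq 0$ on $\bar{T}_\delta$, the bilinear term $\beta_1 z_1 + \beta_2 z_2$ is non-positive. Choosing $c_0$ so that $G(x, t) = \varphi(x, t) - \tau \delta / 2$ gives $\varphi - G \leq -\tau\delta/2 + O(\delta^2) < 0$ on $\bar{E}_{x,t,\delta}$ for $\delta$ small enough, and combining with $H_{\infty}(\cdot, \cdot; f, W) \leq \varphi$ on a neighborhood containing $\bar{E}_{x,t,\delta}$ (again for $\delta$ small) yields the comparison. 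The supersolution case proceeds identically, using $\rho' = \rho + \epsilon (1, 1)$ for $\epsilon > 0$ slightly above the critical value $\epsilon^*$ solving $\tau(\epsilon^*) + 2\epsilon^* = 0$: here $v(\rho')$ is an increasing function of $\epsilon$ and blows up as $\rho + \epsilon(1,1)$ approaches the boundary $\rho_1 + \rho_2 = 1$, so $\epsilon^* \in (0, (1 - \rho_1 - \rho_2)/2)$ exists by IVT, and the analogous choice produces the reversed sign configuration $\beta_i \leq 0$, $\tau < 0$.

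The main obstacle I foresee is precisely this fine-tuning of $\rho'$: the "effective slope" of $G$ along the 2D surface $\bar{E}_{x,t,\delta}$ must dominate (resp. be dominated by) that of $\varphi$, while keeping $\rho'$ inside $\mathbb T$. The geometry of $\bar{E}_{x,t,\delta}$ includes a segment at time $s = t$ on which a naive choice $\rho' = \rho$ only controls $\varphi - G$ up to $O(\delta^2)$, so the choice $\rho' = s\rho$ (resp. $\rho + \epsilon(1,1)$) is essential; the automatic inequality $\partial_t \varphi(x, t) \leq 0$ is what makes the intermediate value argument produce an admissible $s^*$ (resp. $\epsilon^*$) in every case.
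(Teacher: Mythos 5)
Your proof is correct and uses the same three ingredients as the paper (Taylor expansion of $\varphi$, Propositions \ref{prop:proline} and \ref{prop:invtrans} for linear profiles, and Proposition \ref{prop:localitecontinue} for space-time comparison on $\bar{E}_{x,t,\delta}$), but the mechanism for producing the comparison slope is genuinely different, and you argue by contradiction rather than directly. In the paper, one solves for the \emph{unique} $\rho$ such that the evolved linear profile $\rho\cdot y + c_\delta - v(\rho)s$ matches the affine approximation $\psi + C\delta^2$ exactly on the surface $\bar{E}_{x,t,\delta}$; this leads to the nonlinear system \eqref{eq:systempente}, solved by observing that $\rho_h\mapsto\rho_h+\hat v(\rho_h,\rho_v)$ is a bijection from $[|\rho_v|,1)$ to $[|\rho_v|,+\infty)$. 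The resulting $\rho$ is generally \emph{not} $\nabla\varphi(x,t)$; one first gets $\partial_t\varphi+v(\rho)\leq 0$ and then uses the monotonicity of $v$ in $\rho_1+\rho_2$ (Remark \ref{rem:nablavrho}) to pass to $v(\nabla\varphi)\leq v(\rho)$. Your version avoids solving the system exactly: you postulate a one-parameter family $\rho'=s\rho$ (resp.\ $\rho+\epsilon(1,1)$) and pick the parameter via the intermediate value theorem so that both the sign of $\tau=\partial_t\varphi+v(\rho')$ and the signs of the surface coefficients $\beta_i=\rho_i-\rho_i'-\tau/2$ are the ones needed to dominate (resp.\ be dominated by) $\varphi$ on $\bar{E}_{x,t,\delta}$ up to an $O(\delta^2)$ error, with a strict $\tau\delta/2$ margin supplying the contradiction. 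Both routes ultimately lean on the same structural facts about $v$ (monotonicity, blowup at $\rho_1+\rho_2=1$), and both crucially use the automatic inequality $\partial_t\varphi(x,t)\leq 0$ coming from monotonicity in time of $H_\infty$. The paper's approach is more explicit and constructive (it identifies the effective inverse image of $\nabla\varphi$ under the ``surface slope'' map, which the authors tie to the Gates--Westcott bijection), while yours is perhaps softer and shorter; there is no gap in your argument that I can see.
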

\begin{rem}
 Observe that, because of the restriction    $\nabla \varphi(x,t) \in 
 \mathbb T$, the statement  is a priori weaker than saying that 
 $H_\infty$ is a viscosity solution in the usual sense.
\end{rem}

\begin{proof}
  Suppose that $H_{\infty}(x,t;f,W) = \varphi(x,t)$,
  $H_{\infty}(\cdot,\cdot;f,W) \leq \varphi$ in a neighbourhood of $(x,t)$
  (the case $H_{\infty}(\cdot,\cdot;f,W) \geq \varphi$ is similar, so we will not
  treat it) and $\nabla \varphi(x,t) \in \mathbb T$.  Let us start by replacing 
  $\varphi$ by an affine function $\psi$ 
  by setting for all $y \in \R^2$ et $s \geq -t$
\begin{equation}
\psi(x+y,t+s) := \varphi\left(x,t\right) + y_1 \, 
\partial_{x_1} \varphi(x,t) + y_2 \, \partial_{x_2} \varphi(x,t) + 
s \, \partial_t \varphi (x,t).
\end{equation}
By Taylor expansion at order $2$, there exists some $C>0$ such that for all 
small enough $\delta$ and all $\|y\| \leq \delta$, $|s| \leq \delta$,

\begin{equation}
\left| \psi (x+y,t+s) - \varphi (x+y,t+s)\right| \leq C \delta^2 .
\end{equation}
From this inequality, we deduce that for all small enough $\delta$,
\begin{equation}\label{eq:ineqEdelta}
H_{\infty}(\cdot,\cdot;f,W) \leq \psi + C \delta^2 \quad \text{ on } 
\bar{E}_{x,t,\delta}.
\end{equation}
Now, we need the following Lemma in order to apply Proposition 
\ref{prop:localitecontinue} afterwards.
\begin{lem}\label{lem:chercherho}
There exists $\rho=\rho(x,t) \in \mathbb T$ and, for every $\delta>0$ small 
enough, there exists  $c_{\delta}=c_\delta(x,t) \in \R$ such that
\begin{equation}\label{eq:equationrhoc}
H_{\infty}(\cdot,\cdot;f_{\rho}+c_{\delta},W) = \psi + C \delta^2 \quad \text{ 
on } \bar{E}_{x,t,\delta}.
\end{equation}

The values of $\rho$ and $c_\delta$ are uniquely determined by the conditions
\begin{equation}\label{eq:systempente}
\left\{
\begin{aligned}
\rho_1 - \rho_2  & = \partial_{x_1} \varphi(x,t) - \partial_{x_2} \varphi(x,t) \\
\rho_1 + \rho_2 + v(\rho) & = \partial_{x_1} \varphi(x,t) + 
\partial_{x_2} \varphi(x,t) - \partial_t \varphi(x,t) \\
c_{\delta}   & = \varphi(x,t) - \delta \, \partial_t \varphi(x,t) - f_{\rho}(x) 
+  v(\rho) (t-\delta)+ C \delta^2.
\end{aligned}
\right.
\end{equation}
\end{lem}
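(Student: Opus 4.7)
The plan is to write both sides of the equality \eqref{eq:equationrhoc} explicitly on $\bar{E}_{x,t,\delta}$ as affine functions of the variable $(y_1,y_2) \in \bar{T}_\delta$, and then match their coefficients. First, by Proposition \ref{prop:proline} combined with vertical translation invariance (Proposition \ref{prop:invtrans}),
\begin{equation*}
H_{\infty}(x+y, t+s; f_\rho + c_\delta, W) = \rho\cdot(x+y) + c_\delta - v(\rho)(t+s)
\end{equation*}
almost surely in $W$. On $\bar{E}_{x,t,\delta}$ one has $s = -\delta - (y_1+y_2)/2$, so this expression, viewed as a function of $y$, is affine with constant term $\rho\cdot x + c_\delta - v(\rho)(t-\delta)$ and $y_i$-coefficient $\rho_i + v(\rho)/2$. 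Similarly, $\psi(x+y, t+s) + C\delta^2$ restricted to $\bar{E}_{x,t,\delta}$ is affine in $y$ with constant term $\varphi(x,t) - \delta\partial_t\varphi(x,t) + C\delta^2$ and $y_i$-coefficient $\partial_{x_i}\varphi(x,t) - \partial_t\varphi(x,t)/2$.

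Since $\bar{T}_\delta$ has non-empty interior in $\R^2$, equality of these two affine functions is equivalent to three scalar equations. Taking the sum and the difference of the two slope equations, and keeping the constant equation as is, one recovers exactly the three lines of \eqref{eq:systempente}. In particular $c_\delta$ is determined linearly from $\rho$ via the third line, so it only remains to show existence and uniqueness of $\rho \in \mathbb T$ for the first two. Write $A = \partial_{x_1}\varphi(x,t) - \partial_{x_2}\varphi(x,t)$ and $B = \partial_{x_1}\varphi(x,t) + \partial_{x_2}\varphi(x,t) - \partial_t\varphi(x,t)$. Since $\nabla\varphi(x,t)\in \mathbb T$, $|A|<1$. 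Parameterize $\rho_1 = (u+A)/2$, $\rho_2 = (u-A)/2$ with $u \in [|A|,1)$ (this is precisely the range that keeps $\rho\in \mathbb T$), and set $g(u) := u + v(\rho(u))$. Using the formula for $\nabla v$ in Remark \ref{rem:nablavrho}, $g$ is $C^1$ with $g'(u) \geq 1$, hence strictly increasing. Moreover $g(|A|) = |A|$, because $v$ vanishes whenever a coordinate of $\rho$ is zero, and $g(u) \to +\infty$ as $u \to 1^-$, because $\sin(\pi(\rho_1+\rho_2)) \to 0$ in the denominator of $v$ while the numerator is bounded away from zero (as $|A|<1$ forces $\rho_1,\rho_2\in(0,1)$ in the limit). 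Therefore $g:[|A|,1)\to[|A|,+\infty)$ is a bijection, and $g(u) = B$ has a unique solution if and only if $B \geq |A|$.

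The main (though modest) obstacle is thus the verification $B \geq |A|$. Since $|A|=|\partial_{x_1}\varphi-\partial_{x_2}\varphi|$, this amounts to $\partial_t\varphi(x,t) \leq 2\min(\partial_{x_1}\varphi(x,t), \partial_{x_2}\varphi(x,t))$, which holds because $\nabla\varphi(x,t) \in \mathbb T$ makes the min non-negative and because $\partial_t\varphi(x,t) \leq 0$. The latter inequality is inherited from the time-monotonicity of $H_\infty$: indeed $H(x,\cdot;h,W)$ is non-increasing by the very definition \eqref{eq:Hxt}, hence so is $H_L(x,\cdot;f,W)$, and therefore so is the pointwise limit $H_\infty(x,\cdot;f,W)$. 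Combined with the touching-from-above condition, for $h<0$ one has
\begin{equation*}
\varphi(x, t+h) \geq H_{\infty}(x, t+h;f,W) \geq H_{\infty}(x, t;f,W) = \varphi(x, t),
\end{equation*}
so dividing by $h<0$ and letting $h\to 0^-$ yields $\partial_t\varphi(x,t)\leq 0$. Finally, the smallness condition on $\delta$ (explicitly $\delta\leq t/2$) is only needed so that $\bar{E}_{x,t,\delta}$ is contained in the positive-time half-space, allowing Proposition \ref{prop:proline} to be applied on the whole of $\bar{E}_{x,t,\delta}$.
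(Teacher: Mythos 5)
Your proof is correct and follows essentially the same approach as the paper's: matching affine coefficients on $\bar{E}_{x,t,\delta}$, then changing coordinates to $(\rho_h,\rho_v)=(\rho_1+\rho_2,\rho_1-\rho_2)$, using monotonicity of $\rho_h\mapsto\rho_h+\hat v(\rho_h,\rho_v)$ to get a bijection onto $[|\rho_v|,+\infty)$, and verifying $\partial_t\varphi(x,t)\le 0$ and $\nabla\varphi(x,t)\in\mathbb T$ to land in the range. Your $g$ and the pair $(A,B)$ are exactly the paper's $\rho_h+\hat v$ and $(\rho_v,\,\partial_{x_1}\varphi+\partial_{x_2}\varphi-\partial_t\varphi)$.
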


Let us first admit this Lemma and conclude the proof of Proposition 
\ref{prop:soluviscosite}. By Proposition \ref{prop:localitecontinue}, 
inequality \eqref{eq:ineqEdelta} and Lemma \ref{lem:chercherho},
\begin{align*}
\varphi(x,t) &= H_{\infty}(x,t;f,W) 
 \leq H_{\infty}(x,t;f_{\rho}+c_{\delta},W) \\
& = f_{\rho}(x) + c_{\delta} - v(\rho) t&& \text{by Proposition 
\ref{prop:proline}} \\
& = \varphi(x,t) - \delta \, \partial_t \varphi(x,t) - \delta \,  v(\rho) + C 
\delta ^2 && \text{by  \eqref{eq:systempente}.}
\end{align*}
Since this holds for all small enough $\delta>0$, we finally get that
\begin{equation}\label{eq:partialtvrho}
\partial_t \varphi(x,t) + v(\rho) \leq 0.
\end{equation}
Now, combining \eqref{eq:partialtvrho} and the second equality in 
\eqref{eq:systempente},
\begin{eqnarray}
\partial_{x_1} \varphi(x,t) + 
\partial_{x_2} \varphi(x,t) \leq \rho_1 + \rho_2.  
\end{eqnarray}
Then, since $v$ is increasing with respect to $\rho_1+\rho_2$ (see Remark
\ref{rem:nablavrho}) and since 
$\rho_1-\rho_2=\partial_{x_1} \varphi(x,t) - \partial_{x_2} \varphi(x,t)$, we 
deduce that
\begin{equation}
v(\nabla \varphi(x,t)) \leq v(\rho).
\end{equation}
Finally, from \eqref{eq:partialtvrho}, we get what we wanted:
\begin{eqnarray}
\partial_t \varphi(x,t) +  v(\nabla \varphi (x,t)) \leq 0.  
\end{eqnarray}
\end{proof}

\begin{proof}[Proof of Lemma \ref{lem:chercherho}]
Let us look for a condition on $\rho$ and $c_{\delta}$ such that 
\eqref{eq:equationrhoc} is satisfied. On the one hand,
\begin{equation}
\psi\left(x+y,t-\delta - \frac{y_1 + y_2}{2} \right) = \varphi(x,t)  + 
y_1 \partial_{x_1} \varphi(x,t) + y_2  \partial_{x_2} \varphi(x,t) - \left(
\frac{y_1+y_2}{2} + \delta \right) \partial_t \varphi (x,t),
\end{equation}
and on the other hand, by Propositions \ref{prop:proline} and 
\ref{prop:invtrans},
\begin{equation}
H_{\infty}\left(x+y,t-\delta - \frac{y_1 + y_2}{2};f_{\rho}+c_{\delta},W \right) = \rho 
\cdot (x+y) + c_{\delta} - v(\rho) \left(t-\delta - \frac{y_1 + y_2}{2}\right).
\end{equation}
Therefore, it is necessary and sufficient to find $\rho \in \mathbb T$ and 
$c_{\delta} \in 
\R$ 
such that
\begin{equation}
\left\{
\begin{aligned}
\rho_1 + \frac{1}{2}v(\rho) & = \partial_{x_1} \varphi(x,t) - \frac{1}{2} 
\partial_t \varphi(x,t) \\
\rho_2 + \frac{1}{2}v(\rho) & = \partial_{x_2} \varphi(x,t) - \frac{1}{2} 
\partial_t \varphi(x,t) \\
\rho \cdot x + c_{\delta} - v(\rho) (t-\delta)  & = \varphi(x,t) - \delta \, 
\partial_t \varphi(x,t) + C \delta^2.
\end{aligned}
\right.
\end{equation}
Notice that  this 
system 
is equivalent to \eqref{eq:systempente}. We are going to show that this system 
has a unique solution $(\rho,c_{\rho})$ with $\rho \in \mathbb T$ and 
$c_{\delta} \in \R$. With the change of coordinate $\rho_h = 
\rho_1+\rho_2$, $\rho_v=\rho_1-\rho_2$ and $\hat 
v(\rho_h,\rho_v):=v((\rho_h+\rho_v)/2,(\rho_h-\rho_v)/2)$, it is equivalent to 
looking for 
$(\rho_h,\rho_v)$ such that $|\rho_v| \leq \rho_h <1$ and
\begin{equation}\label{eq:rhohv}
\left\{
\begin{aligned}
\rho_v  & = \partial_{x_1} \varphi(x,t) - \partial_{x_2} \varphi(x,t) \\
\rho_h + \hat v(\rho_h,\rho_v) & = \partial_{x_1} \varphi(x,t) + 
\partial_{x_2} \varphi(x,t) - \partial_t \varphi(x,t).
\end{aligned}
\right.
\end{equation}
The first equation fixes the value of
$\rho_v = \partial_{x_1} \varphi (x,t) - \partial_{x_2} \varphi (x,t)$
which belongs to $]-1,1[$ (since $\nabla \varphi (x,t) \in \mathbb T$
by assumption).  Now, by Remark \ref{rem:nablavrho}, for fixed
$\rho_v$, the function $\rho_h \mapsto \rho_h + \hat v(\rho_h,\rho_v)$
is strictly increasing and thus is a bijection from $[|\rho_v|,1)$ to
$[|\rho_v|,+\infty)$.  {(The reason behind this fact can be traced back to the 
  bijection between the Borodin-Ferrari dynamics and the (discretized)
  Gates-Westcott one, mentioned in Section \ref{sec:locality}).} Besides, since
$H_{\infty}(\cdot,\cdot;f,W) \leq \varphi$ on a neighbourhood of
$(x,t)$ with equality at $(x,t)$ and since
$H_{\infty}(\cdot,\cdot;f,W)$ is non-increasing w.r.t time, we deduce
that necessarily $\partial_t \varphi(x,t) \leq 0$. Moreover,
$\partial_{x_1} \varphi (x,t),\partial_{x_2} \varphi (x,t) \geq 0$
(since $\nabla \varphi (x,t) \in \mathbb T$ by assumption) and thus
$\partial_{x_1} \varphi (x,t) + \partial_{x_2} \varphi (x,t) \geq
|\partial_{x_1} \varphi(x,t) - \partial_{x_2}
\varphi(x,t)|$. Consequently, we have
$$\partial_{x_1} \varphi(x,t) + \partial_{x_2} \varphi(x,t) -
\partial_t \varphi(x,t) \in [|\rho_v|,+\infty[$$ and thus there exists
(a unique) $\rho_h \in [|\rho_v|,1)$ satisfying the second equation in
\eqref{eq:rhohv}. This finally imposes the value of $\rho$ and of
$c_{\delta}$ (by the third equation in ~\eqref{eq:systempente}).
\end{proof}

\subsection{Uniqueness and conclusion}

To conclude this section, we need to 
show that there is at most one viscosity solution in the sense of Proposition 
\ref{prop:soluviscosite}.

\begin{prop}\label{prop:uniquevisco}
	There exists a most one continuous function $u'$ on $\R^2 \times \R_+$ with 
	initial condition $f \in \bar{\Gamma}$ which is a viscosity solution of 
	\eqref{eq:visc} 
	in the sense of \eqref{eq:visceq}, 
	which satisfies the following gradient bounds:
	\begin{equation}\label{eq:gradT}
	\begin{aligned}
	0 \leq \frac{u'(x+\lambda \, e,t)-u'(x,t)}{\lambda} \leq 1 \text{ for any } 
	e\in\{(1,0),(0,1),(1,1)\}, x\in \R^2, \lambda \geq 0,
	\end{aligned}
	\end{equation}
	and which grows at most linearly 
	in time, i.e there 
	exists a finite $S>0$ such that
	\begin{eqnarray}
	\label{eq:RS}
	0\le 
	f(x)-u'(x,t)\le St \text{ for every }  x\in \R^2, t \geq 0.
	\end{eqnarray}
	Moreover, $u'$ coincides with $u$, the unique viscosity solution of 
	\eqref{eq:visc}.
\end{prop}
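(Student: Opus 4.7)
The plan is to prove the statement via a standard doubling-of-variables comparison argument, specialised to accommodate the fact that the viscosity condition \eqref{eq:visceq} is tested only against smooth functions $\varphi$ with $\nabla\varphi \in \mathbb T$. Concretely, I first exhibit a concrete element of the class of solutions specified in the proposition, namely the standard viscosity solution $u$ of the equation with the Lipschitz Hamiltonian $\tilde v$ from Remark \ref{rem:compar}, and then show that any other $u'$ in this class must coincide with $u$.

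First, I verify that $u$ belongs to the class. Standard viscosity theory produces a unique solution $u$ of $\partial_t u + \tilde v(\nabla u) = 0$ with $u(\cdot,0) = f$. As noted in Remark \ref{rem:compar}, comparison with affine solutions gives $u(\cdot, t) \in \bar{\Gamma}_M$ for all $t$, so the gradient bounds \eqref{eq:gradT} hold and the sub- and super-differentials of $u$ are contained in $\overline{\mathbb T_M} \subset \mathbb T$, where $\tilde v \equiv v$. This immediately implies that $u$ satisfies \eqref{eq:visceq}. Finally, since $0\leq\tilde v$ is bounded on $\mathbb T_M$ by some constant $S$, comparison with the constant-in-space sub- and supersolutions $f - St$ and $f$ yields \eqref{eq:RS}.

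For uniqueness, fix a candidate $u'$, suppose for contradiction that $u'(x_0,t_0) > u(x_0,t_0)$ for some $(x_0,t_0)$, and for small parameters $\epsilon, \alpha, \gamma > 0$ consider
\begin{equation*}
\Phi(x,y,t,s) = u'(x,t) - u(y,s) - \frac{|x-y|^2 + (t-s)^2}{2\epsilon} - \alpha(|x|^2 + |y|^2) - \gamma t
\end{equation*}
on $\R^2 \times \R^2 \times [0,T]^2$. The Lipschitz bound \eqref{eq:gradT} and the linear time-growth \eqref{eq:RS} make $\Phi$ coercive, so a maximiser $(\bar x, \bar y, \bar t, \bar s)$ exists, and $\Phi(\bar x, \bar y, \bar t, \bar s) \geq \Phi(x_0,x_0,t_0,t_0) > 0$ for $\alpha,\gamma$ small; from this one derives the a priori bounds $\alpha(|\bar x|^2+|\bar y|^2) = O(1)$ and $|\bar x - \bar y|^2/\epsilon = O(1)$, and the cases $\bar t = 0$ or $\bar s = 0$ are excluded by the equality of the initial data. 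Ishii's lemma then supplies smooth test functions $\varphi_1$ touching $u'$ from above at $(\bar x, \bar t)$ and $\varphi_2$ touching $u$ from below at $(\bar y, \bar s)$, with
\begin{equation*}
\nabla \varphi_1(\bar x, \bar t) = p + 2\alpha \bar x, \qquad \nabla \varphi_2(\bar y, \bar s) = p - 2\alpha \bar y, \qquad p := (\bar x - \bar y)/\epsilon,
\end{equation*}
and $\partial_t \varphi_1(\bar x, \bar t) - \partial_t \varphi_2(\bar y, \bar s) = \gamma$.

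The main obstacle is to guarantee that the vector $\nabla\varphi_1(\bar x, \bar t)$ entering the \emph{restricted} viscosity condition for $u'$ actually lies in the open triangle $\mathbb T$. Here the key point is that $\nabla u$ takes values in $\mathbb T_M$ almost everywhere, so the subdifferential of $u$ at $(\bar y, \bar s)$ lies in $\overline{\mathbb T_M}$, forcing $p - 2\alpha \bar y \in \overline{\mathbb T_M}$. Since $\alpha |\bar x|, \alpha |\bar y| = O(\sqrt{\alpha})$ by the a priori bounds above, for $\alpha$ sufficiently small both $\nabla\varphi_1(\bar x, \bar t)$ and $\nabla\varphi_2(\bar y, \bar s)$ lie in a slightly enlarged $\mathbb T_{M'} \subset \mathbb T$, on which $v$ is Lipschitz. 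Combining the restricted condition for $u'$ with the standard one for $u$ yields
\begin{equation*}
\gamma + v(\nabla\varphi_1(\bar x, \bar t)) - v(\nabla\varphi_2(\bar y, \bar s)) \leq 0,
\end{equation*}
and since $|\nabla\varphi_1(\bar x, \bar t) - \nabla\varphi_2(\bar y, \bar s)| = O(\sqrt{\alpha})$, Lipschitzness of $v$ on $\mathbb T_{M'}$ gives $\gamma = O(\sqrt{\alpha})$, contradicting $\gamma > 0$ once $\alpha$ is small enough. This shows $u' \leq u$, and the opposite inequality follows by the symmetric argument with the roles of $u$ and $u'$ exchanged.
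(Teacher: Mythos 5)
Your proof is correct and takes essentially the same route as the paper: a doubling-of-variables comparison where the gradient of the test function touching $u$ is confined to $\overline{\mathbb T_M}$ by the bounds \eqref{eq:gradT} on $u(\cdot,t)\in\bar\Gamma_M$, and then the $O(\sqrt\alpha)$ closeness of the two test-function gradients plus Lipschitzness of $v$ on a slightly enlarged $\mathbb T_{M'}$ yields the contradiction $\gamma=O(\sqrt\alpha)$. One small point you should make explicit (the paper does so in Remark \ref{rem:t=T}): your argument rules out $\bar t=0$ and $\bar s=0$, but a maximiser on $[0,T]^2$ could a priori sit at $\bar t=T$ or $\bar s=T$, and one must note that the relevant viscosity inequality still holds at the final-time boundary; also, invoking Ishii's lemma is superfluous here since for a first-order equation the quadratic penalisation itself provides the admissible test functions.
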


The proof of this Proposition is postponed to the Appendix. Since any limit 
$H_{\infty}(\cdot,\cdot;f,W)$ satisfies \eqref{eq:gradT} (by taking the limit 
in the gradient 
bounds \eqref{eq:height} \eqref{eq:height2} and \eqref{eq:grad2} satisfied by 
discrete height functions) 
and \eqref{eq:RS} 
(by taking the limit in 
\eqref{eq:gawm} and by monotonicity w.r.t time), we deduce the following 
Corollary.

\begin{coro}\label{coro:uniqueness}
		For almost every $W$ and for any limit $H_{\infty}$ as in Proposition 
		\ref{prop:compacite} we have that for every $f \in \bar\Gamma$, $(x,t) 
		\mapsto 
		H_{\infty}(x,t;f,W)$ is the unique viscosity solution of 
		\eqref{eq:visc}.
\end{coro}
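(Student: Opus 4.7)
The plan is to identify $u'$ with the unique classical viscosity solution $u$ of $\partial_t u + \tilde v(\nabla u) = 0$ with $u(\cdot,0)=f$, where $\tilde v$ is the Lipschitz extension of $v|_{\mathbb T_M}$ to $\mathbb R^2$ introduced in Remark \ref{rem:compar}. The classical theory (\cite[Sec.~5 and 7]{barles2013introduction}) provides existence and uniqueness of such $u$; Remark \ref{rem:compar} guarantees $u(\cdot,t) \in \bar\Gamma_M$ for all $t$, so that $u$ satisfies \eqref{eq:gradT}, and \eqref{eq:RS} follows from the $L^\infty$ bound of $v$ on $\mathbb T_M$ combined with the monotonicity $\partial_t u \leq 0$. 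Hence $u$ fits the hypotheses of the Proposition, and the entire task is to show that any $u'$ satisfying them must coincide with $u$.

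For this I would show that $u'$ is itself a classical viscosity sub- and supersolution of $\partial_t u + \tilde v(\nabla u)=0$, and then invoke the standard comparison principle for Hamilton--Jacobi equations with Lipschitz Hamiltonian. Let $\varphi$ be smooth and touch $u'$ from above at $(x,t)$; the supersolution case is analogous. A one-sided difference-quotient argument exploiting the spatial monotonicity and the bound $\partial_1 u' + \partial_2 u' \leq 1$ from \eqref{eq:gradT} forces $\nabla \varphi(x,t) \in \overline{\mathbb T}$. When moreover $\nabla \varphi(x,t) \in \mathbb T_M$, the classical subsolution inequality $\partial_t \varphi(x,t) + \tilde v(\nabla \varphi(x,t)) \leq 0$ is immediate from \eqref{eq:visceq} together with $\tilde v = v$ on $\mathbb T_M$.

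The main obstacle is the case $\nabla \varphi(x,t) \in \overline{\mathbb T}\setminus \mathbb T_M$: on the hypotenuse $\{\rho_1+\rho_2=1\}$ the hypothesis \eqref{eq:visceq} does not apply at all, and just below the hypotenuse $v$ and $\tilde v$ may disagree. I would circumvent this by strengthening the slope bound on $u'$ to show that in fact $u'(\cdot,t) \in \bar\Gamma_M$ for every $t$, so that the gradient of any touching test function automatically lies in $\overline{\mathbb T_M}\subset\mathbb T$ and the interior case treated above suffices. This refined slope bound should follow from a comparison between $u'$ and affine profiles $(y,s) \mapsto \rho\cdot y - v(\rho) s + c$ with $\rho \in \mathbb T_M$; these are classical solutions of the PDE (consistent with Proposition \ref{prop:proline}), and they can be compared to $u'$ by the standard doubling-of-variables argument, the key point being that the test functions arising at the penalisation maximum have gradient in $\mathbb T_M$ by construction, so that \eqref{eq:visceq} is applicable at every step. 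Once $u'\in\bar\Gamma_M$ is established, $u'$ is shown to be a classical viscosity solution of the Lipschitz-extended PDE, and the classical comparison principle immediately yields $u'=u$, which completes the proof.
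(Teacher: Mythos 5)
Your proposal targets Proposition \ref{prop:uniquevisco} rather than the Corollary itself; the Corollary's own proof is simply the observation that any limit $H_\infty$ from Proposition \ref{prop:compacite} satisfies \eqref{eq:gradT} (passing to the limit in the discrete gradient constraints \eqref{eq:height}--\eqref{eq:grad2}), \eqref{eq:RS} (from \eqref{eq:gawm} and temporal monotonicity), and \eqref{eq:visceq} (Proposition \ref{prop:soluviscosite}), after which Proposition \ref{prop:uniquevisco} applies. That said, your strategy for the uniqueness statement itself is genuinely different from the paper's and has a gap at its central step. The paper performs doubling of variables \emph{directly} between $u$ and $u'$: the test function $\varphi^{(1)}$ touching the known solution $u$ from above has gradient in $\mathbb T_M$ because $u(\cdot,t)\in\bar{\Gamma}_M$, and the penalisation keeps $\nabla\varphi^{(2)}$ (touching $u'$) within $O(\sqrt\beta)$ of $\nabla\varphi^{(1)}$, hence inside $\mathbb T_{M+1}\subset\mathbb T$, so that \eqref{eq:visceq} can be applied to $u'$. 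This asymmetry between the known $u$ and the unknown $u'$ is precisely what makes the argument work without any a priori slope bound on $u'$ beyond \eqref{eq:gradT}.

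Your plan instead requires first proving the stronger fact $u'(\cdot,t)\in\bar{\Gamma}_M$ for all $t$, which you propose to obtain from ``comparison between $u'$ and affine profiles.'' This step is not justified, and I do not see how it works as stated. Comparing $u'$ with a single affine solution $\psi_\rho$ via doubling gives a one-sided pointwise bound $u'\leq\psi_\rho$ (or $\geq$); the envelope over all admissible $\rho,c$ is a Hopf--Lax type bound which, because $v$ is not convex, does not characterize $u'$, and in any case a pointwise upper bound does not control increments $u'(x+\lambda(1,1),t)-u'(x,t)$. The usual way to propagate the slope constraint is to compare $u'$ with its translate $u'(\cdot+\lambda(1,1),\cdot)-(1-\tfrac1M)\lambda$, but that translate satisfies only the \emph{same} weak notion \eqref{eq:visceq}; in that doubling the gradients of both test functions are a priori known to lie only in $\overline{\mathbb T}$, where \eqref{eq:visceq} may fail. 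This is exactly the circularity you identified, and your route does not break it. To close the gap you would need to exploit an asymmetry as the paper does (pin one side of the doubling to the known $u\in\bar{\Gamma}_M$), or introduce a separate regularisation (e.g.\ an inf-convolution) with explicit control of the gradient constraint.
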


To complete the proof of Theorem \ref{theo:principalLJ} it is sufficient to put together what we have obtained so far.
\begin{proof}[Proof of Theorem \ref{theo:principalLJ}]
  Let us fix $W$ in an event of probability one such that Proposition
  \ref{prop:compacite} and Corollaries \ref{coro:finitespeed} and 
  \ref{coro:uniqueness} 
  hold
  simultaneously. Let $f \in \bar{\Gamma}_{M}$. By the discussion in
  Section \ref{sec:remcondini}, without loss of generality, we can
  replace the sequence of initial height profiles $h_L$ approaching
  $f$ as in Theorem \ref{theo:principalLJ} by $h_L^f$.  Assume that
  for some $T,R>0$, the limit \eqref{eq:limitehydroGW} does not hold,
  i.e, there exists $\eps>0$ and a subsequence $(L_k)_{k \in \N}$ such
  that
		\begin{equation}\label{eq:contradiction}
		\forall k \in \N, \:
		\sup_{\|x\| \leq R, t \in [0,T]} 
		\left| H_{L_k}(x,t;f,W) - u(x,t) 
		\right| \geq \eps,
		\end{equation}
		where $u$ is the unique viscosity solution of
                \eqref{eq:visc}.  Then, by Proposition
                \ref{prop:compacite}, we can extract a sub-subsequence
                $(L_{k_m})_{m \in \N}$ such that for all
                $g \in \bar{\Gamma}$, the sequence of rescaled height
                functions $(H_{L_{k_m}}(\cdot,\cdot;g,W))_{m \in \N}$
                converges uniformly on all compact sets to some
                continuous function $H_{\infty}(\cdot,\cdot;g,W)$.  By
                Corollary \ref{coro:uniqueness},
                $(x,t)\mapsto H_{\infty}(x,t;f,W)$ has to be equal to
                $u$. This is in contradiction with
                \eqref{eq:contradiction}.
\end{proof}

\appendix

\section{Proof of Proposition \ref{prop:uniquevisco}}

The proof relies on an adaptation of a standard doubling of variables argument 
(see e.g \cite[p.64-72]{barles2013introduction}).

Let $u'$ be a continuous function initially equal to $f\in \bar{\Gamma}_M$ for 
some integer $M$ and which is a solution in the sense of Eq. \eqref{eq:visceq}
 satisfying \eqref{eq:gradT} and \eqref{eq:RS}. Let $u$ be the unique 
viscosity solution of \eqref{eq:visc}. As explained in Remark \ref{rem:compar}, 
$u(\cdot,t)$ stays in $\bar{\Gamma}_M$ for all $t$ so in particular, it 
satisfies \eqref{eq:gradT} and
 by 
the comparison principle (and since $v$ is positive on $\mathbb T_M$), it 
satisfies \eqref{eq:RS} 
with 
$S_M:=\max_{\rho \in \mathbb T_M}v(\rho)$.

Let us start by showing that $u \leq u'$.  Suppose the contrary i.e 
$u(x,t) > u'(x,t) $ for some $x,t$. Therefore, we can fix $T \geq t$ and 
arbitrary small $\eta, \beta>0$ such that the supremum
\begin{eqnarray}
\label{eq:M}
M:= \sup_{x,t \in \R^2 \times [0,T]} (u(x,t) - u'(x,t) - \eta \, t - 2\beta \, 
\|x\|^2)  
\end{eqnarray}
is positive. Now, we introduce the function
\[
\psi(x,t,y,s) := (x,t,y,s) \mapsto u(x,t) - u'(y,s) - \frac{\|x-y\|^2}{\eps^2} 
- \frac{|t-s|^2}{\alpha^2} - \eta \, t - \beta \, (\|x\|^2 + \|y\|^2),
\]
where $\eps,\alpha$ are penalisation parameters that make the supremum of 
$\psi$ looks like $M$ for small $\eps,\alpha$.

\begin{lem} \label{lem:psibar}
	Let us fix positive $T,\eta,\beta$ such that $M>0$. For any $\eps,\alpha 
	\in[0,1]$, 
	$\psi$ attains its maximum $\bar{M}$ on a point 
	$(\bar{x},\bar{t},\bar{y},\bar{s}) \in \R^2\times[0,T] \times 
	\R^2\times[0,T]$. Moreover, 
	\begin{enumerate}
		\item $0 < M \leq \bar{M}$ 
		\item $\|\bar{x}-\bar{y}\|^2 \leq C \eps^2$, $|\bar{t}-\bar{s}|^2 \leq 
		C 
		\alpha^2$ and $\|\bar{x}\|^2 + \|\bar{y}\|^2 \leq C \, \beta^{-1}$ with 
		$C 
		=2(ST+1)$
		\item $\bar{M} \underset{\eps,\alpha \to 0}{\longrightarrow}M$
		\item $\bar{t},\bar{s}>0$ for all $\eps,\alpha$ small enough.
	\end{enumerate}
\end{lem}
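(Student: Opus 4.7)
The plan is to run a standard doubling-of-variables estimate. All four claims reduce to comparing $\psi$ at its maximum $(\bar x,\bar t,\bar y,\bar s)$ with $\psi$ at well-chosen reference points, combined with two shared properties of $u$ and $u'$: (i) spatial Lipschitz constant at most $1$, obtained from \eqref{eq:gradT} (and, via Remark \ref{rem:compar}, for $u$ as well), and (ii) the linear-in-time envelope $0\le f(\cdot)-u(\cdot,t),\,f(\cdot)-u'(\cdot,t)\le St$ from \eqref{eq:RS}. The only mildly delicate point is tracking constants so that exactly $C=2(ST+1)$ drops out; this is why the normalisation $\eps,\alpha\le 1$ appears in the hypothesis.

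For Part 1, $\psi$ is continuous, the time variables lie in the compact $[0,T]$, and the penalty $-\beta(\|x\|^2+\|y\|^2)$ dominates the (at most linear) spatial growth of $u$ and $u'$, so $\psi\to-\infty$ at spatial infinity and the supremum is attained. Restricting to the diagonal, $\psi(x,t,x,t)=u(x,t)-u'(x,t)-\eta t-2\beta\|x\|^2$, whose supremum over $\R^2\times[0,T]$ is exactly $M$, giving $\bar M\ge M>0$. For Part 2, from $\bar M\ge 0$ one has
\[
\frac{\|\bar x-\bar y\|^2}{\eps^2}+\frac{|\bar t-\bar s|^2}{\alpha^2}+\beta(\|\bar x\|^2+\|\bar y\|^2)\le u(\bar x,\bar t)-u'(\bar y,\bar s).
\]
The right-hand side is bounded using Lipschitzness of $u$ in space ($u(\bar x,\bar t)\le u(\bar y,\bar t)+\|\bar x-\bar y\|\le f(\bar y)+\|\bar x-\bar y\|$) together with \eqref{eq:RS} for $u'$ ($u'(\bar y,\bar s)\ge f(\bar y)-ST$), giving the upper bound $\|\bar x-\bar y\|+ST$. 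The single linear term $\|\bar x-\bar y\|$ is absorbed by the matching quadratic via Young's inequality $\|\bar x-\bar y\|\le\|\bar x-\bar y\|^2/(2\eps^2)+\eps^2/2$, and $\eps\le 1$ then delivers the three announced bounds simultaneously with $C=2(ST+1)$.

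Part 3 follows from Part 2 by compactness: $\|\bar x-\bar y\|\to 0$, $|\bar t-\bar s|\to 0$, and $\bar x,\bar y$ remain in a ball of radius $O(\beta^{-1/2})$ (the parameter $\beta$ being fixed), so along a subsequence $\bar x,\bar y\to x^\star$ and $\bar t,\bar s\to t^\star$; dropping the nonnegative $\eps^{-2}$ and $\alpha^{-2}$ penalties in $\bar M$ and using continuity of $u,u'$ gives $\limsup\bar M\le u(x^\star,t^\star)-u'(x^\star,t^\star)-\eta t^\star-2\beta\|x^\star\|^2\le M$. Finally, for Part 4 I argue by contradiction. If $\bar t=0$, then $u(\bar x,0)=f(\bar x)$ and $u'(\bar y,\bar s)\ge f(\bar y)-S\bar s$ give $u(\bar x,0)-u'(\bar y,\bar s)\le\|\bar x-\bar y\|+S\bar s$; two Young inequalities (one against each quadratic penalty) lead to $\bar M\le\eps^2/2+S^2\alpha^2/4$, contradicting $\bar M\ge M$ once $\eps,\alpha$ are small. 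The case $\bar s=0$ is symmetric: $u'(\bar y,0)=f(\bar y)$ and $u(\bar x,\bar t)\le f(\bar x)$ yield $u(\bar x,\bar t)-u'(\bar y,0)\le\|\bar x-\bar y\|$, and a single Young inequality gives $\bar M\le\eps^2/2$, again a contradiction. I do not anticipate any substantive obstacle beyond bookkeeping of constants.
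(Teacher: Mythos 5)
Your proof is correct and follows essentially the same route as the paper's: establish existence of a maximizer via the quadratic penalty, compare with the diagonal to get $\bar M\ge M$, extract the three Part~2 bounds from positivity of $\bar M$ and the estimate $u(\bar x,\bar t)-u'(\bar y,\bar s)\le\|\bar x-\bar y\|+ST$ (absorbing the linear term into the matching quadratic, whether by Young or by splitting the $\eps^{-2}$ term in half as the paper does), and pass to common limit points for Part~3. The only point of genuine divergence is Part~4: the paper derives $\bar t,\bar s>0$ indirectly from Part~3, by showing that any common limit point $(x_0,t_0)$ realizes the supremum $M$, which forces $t_0\ne 0$; you instead assume $\bar t=0$ (resp.\ $\bar s=0$) and obtain a direct quantitative contradiction $\bar M\le\eps^2/2+S^2\alpha^2/4$ via two Young estimates. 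Your variant is marginally more self-contained and explicit, but it is the same idea; both are correct.
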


Let us admit this Lemma first and conclude the proof of Proposition 
\ref{prop:uniquevisco}. The function defined on $\R^2 \times [0,T]$ by
\[
(x,t) \mapsto u(x,t) - \varphi^{(1)}(x,t)
\]
has a local maximum at $(\bar{x},\bar{t})$ where
\[
\varphi^{(1)}(x,t) := u'(\bar{y},\bar{s}) + \frac{\|x - \bar{y}\|^2}{\eps^2} + 
\frac{|t - \bar{s}|^2}{\alpha^2} + \eta \, t + \beta(\|x\|^2 + \|\bar{y}\|^2).
\]

Therefore, since $u$ is solution of viscosity of \eqref{eq:visc}, and by point 
$(4)$ of Lemma \ref{lem:psibar}, we have for all $\eps,\alpha$ small enough,
\begin{equation}\label{eq:viscophi1}
\partial_t \varphi^{(1)}(\bar{x},\bar{t}) + v(\nabla 
\varphi^{(1)}(\bar{x},\bar{t})) \leq 0.
\end{equation}

\begin{rem} \label{rem:t=T}
	To be precise, we need to ensure that the viscosity inequality satisfied by 
	$u$ 
	is still valid if the local optimum on $\R^2 \times [0,T]$ is attained at 
	the border $\bar{t}=T$. This 
	is proven for example in \cite[Lemma 
	5.1]{barles2013introduction} and the same proof can be easily adapted to 
	the 
	case of 
	$u'$.
\end{rem}

Now, the function defined on $\R^2 \times [0,T]$ by
\[
(y,s) \mapsto u'(y,s) - \varphi^{(2)}(y,s)
\]
has a local minimum at $(\bar{y},\bar{s})$  where
\[
\varphi^{(2)}(y,s) := u(\bar{x},\bar{t}) - \frac{\|\bar{x} - y\|^2}{\eps^2} - 
\frac{|\bar{t} - s|^2}{\alpha^2} - \eta \, \bar{t} - \beta(\|\bar{x}\|^2 + 
\|y\|^2).
\]
In order to use the viscosity inequality \eqref{eq:visceq} satisfied by $u'$, 
we first have to 
make sure that $\nabla \varphi^{(2)}(\bar{y},\bar{s}) \in \mathbb T$. From 
\eqref{eq:gradT} and since 
$u'-\varphi^{(2)}$ has a local minimum 
at $(\bar{y},\bar{s})$, it is easy to see that $\nabla 
\varphi^{(2)}(\bar{y},\bar{s})$ is necessarily in the closure of $\mathbb 
T$. We still have to make sure that it stays away from the diagonal $\{ 
\rho_1+\rho_2 =1 \}$.

The idea is to show that 
$\nabla 
\varphi^{(2)}(\bar{y},\bar{s})$ is close to $\nabla \varphi^{(1)} 
(\bar{x},\bar{t})$ which is in $\mathbb T_M$ (because 
$u(\cdot,t)\in\bar{\Gamma}_M$ for all $t$ as said in Remark 
\ref{rem:compar} and because $u -\varphi^{(1)}$ has a local 
maximum at $(\bar x,\bar t)$). By computing the gradients, we find that

\begin{equation}\label{eq:diffgrad}
\|\nabla \varphi^{(1)} 
(\bar{x},\bar{t}) - \nabla 
\varphi^{(2)}(\bar{y},\bar{s})\| = 2 \beta \left\|\bar{x}-\bar{y}\right\| \leq 
4 
\sqrt{C\beta}
\end{equation}
where the last inequality is due to point $(2)$ of Lemma \ref{lem:psibar}. 
This shows that if $\beta$ is chosen small enough, we have that  for all 
$\eps,\alpha$ 
small enough,
\begin{equation}\label{eq:nablaphi2}
\nabla 
\varphi^{(2)}(\bar{y},\bar{s}) \in \mathbb T_{M+1} \subseteq \mathbb T.
\end{equation}
Therefore, by \eqref{eq:visceq},
\begin{equation}\label{eq:viscophi2}
\partial_s \varphi^{(2)}(\bar{y},\bar{s}) + v(\nabla 
\varphi^{(2)}(\bar{y},\bar{s})) \geq 0.
\end{equation}

Combining \eqref{eq:viscophi1} and \eqref{eq:viscophi2}, we get that for all 
$\eps,\alpha$ small enough,
\begin{equation}\label{eq:diffvisco}
\eta + v(\nabla 
\varphi^{(1)}(\bar{x},\bar{t})) - v(\nabla 
\varphi^{(2)}(\bar{y},\bar{s})) \leq 0.
\end{equation}
Finally, as noticed in Remark \ref{rem:nablavrho}, $v$ is Lipschitz on $\mathbb 
T_{M+1}$. Therefore, there exists a constant $K>0$ such that for all 
$\eps,\alpha$ small enough,
\[
\left|v(\nabla 
\varphi^{(1)}(\bar{x},\bar{t})) - v(\nabla 
\varphi^{(2)}(\bar{y},\bar{s}))\right| \leq K \, \|\nabla \varphi^{(1)} 
(\bar{x},\bar{t}) - \nabla 
\varphi^{(2)}(\bar{y},\bar{s})\| \leq K 4 
\sqrt{C\beta},
\]
because of \eqref{eq:diffgrad}. By \eqref{eq:diffvisco}, we get that
\[
\eta - 4 K \sqrt{C \beta} \leq 0
\]
which is a contradiction for $\beta$ small, since $\eta$ is strictly positive 
and $C,K$ are independent of $\eta,\beta$. We conclude that $u \leq u'$ on 
$\R^2 
\times \R_+$.

Although $u$ and $u'$ don't play symmetric roles, we omit the proof that $u' 
\leq u$ on $\R^2 \times \R_+$ since it is very 
similar.

\begin{proof}[Proof of Lemma \ref{lem:psibar}]
Let us first show that the maximum of $\psi$ is attained. We have for all 
$x,y\in \R^2$ and all $t,s \in[0,T]$,
\begin{equation}\label{eq:boundu-u'}
\begin{aligned}
u(x,t) - u'(y,s) & \leq |u(x,t) - u(y,t)| + u(y,t) - u'(y,s) \\
& \leq \|x-y\|  + u(y,0) + S \, T - u'(y,0) 
= \|x-y\| + S \, T
\end{aligned}
\end{equation}
where in the second inequality we used that $u$ satisfies \eqref{eq:gradT} 
hence is $1$-Lipschitz and is
non-increasing w.r.t time and that $u'$ satisfies \eqref{eq:RS}. We also used 
$u(y,0) = u'(y,0) = f(y)$ in the last step.
As a consequence,
\begin{align*}
\psi(x,t,y,s)  \leq \|x-y\| + S T - \beta \, (\|x\|^2 + \|y\|^2)
\end{align*}
which tends to $-\infty$ when $\|x\|,\|y\| \to + \infty$. By continuity of 
$\psi$, 
its maximum is attained at some $(\bar x,\bar t,\bar y,\bar s)$. Now, for any 
$x,t$,
\[
\psi(\bar{x},\bar{t},\bar{y},\bar{s}) \geq \psi(x,t,x,t) = u(x,t) - u'(x,t) - 
\eta t - 2 \beta \|x\|^2,
\]
which shows the first point of Lemma \ref{lem:psibar} by taking the supremum 
with respect to $x,t$.

Then, from the positivity of $\bar{M}$, we deduce that
\begin{align*}
\frac{\|\bar{x} - \bar{y}\|^2}{2\eps^2} +\frac{|\bar{t} - \bar{s}|^2}{\alpha^2} 
+ \beta \, (\|\bar{x}\|^2 + \|\bar{y}\|^2) &\leq u(\bar{x},\bar{t}) - 
u'(\bar{y},\bar{s}) - \frac{\|\bar{x} - \bar{y}\|^2}{2\eps^2} \\
& \stackrel{\text{by \eqref{eq:boundu-u'}}}\leq \|\bar{x}-\bar{y}\| + S T - 
\frac{\|\bar{x} - \bar{y}\|^2}{2\eps^2} \leq ST+1=:\frac C2 && 
\end{align*}
where the last inequality hold for all $\eps \leq 1$. This shows point $(2)$.

From point $(2)$, we know that for fixed $\beta$ and for any
$\eps,\alpha \in [0,1]$, $(\bar{x},\bar{t}),(\bar{y},\bar{s})$ stay in
a compact set so, modulo sub-sequences, we can assume that they
converge to the same point $(x_0,t_0)$ as $\alpha,\eps\to 0$, since 
$\|\bar{x}-\bar{y}\|$ and
$|\bar{t} - \bar{s}|$ tend to $0$. By continuity of $u$ and $u'$, we
have
\begin{align*}
M \leq \bar{M} &\leq u(\bar{x},\bar{t}) - u'(\bar{y},\bar{s}) - \eta \, 
\bar{t} - \beta \, (\|\bar{x}\|^2 + \|\bar{y}\|^2) \\
& \underset{\eps,\alpha \to 0}{\longrightarrow}
u(x_0,t_0) - u'(x_0,t_0) - \eta \, t_0 - 2 \beta \, \|x_0\|^2 \leq M
\end{align*}
which proves point $(3)$.

Finally, if $(x_0,t_0)$ is a common limit point of
$(\bar{x},\bar{t}),(\bar{y},\bar{s})$, then
$u(x_0,t_0) - u'(x_0,t_0) - \eta \, t_0 - 2 \beta \, \|x_0\|^2 = M$ by
the previous inequality and thus $t_0 \neq 0$ otherwise we would have
$M \leq 0$ (since $u(x_0,0) = u'(x_0,0)$) which is a
contradiction. This proves point $(4)$.

\end{proof}
{\bf Acknowledgements} This work was partially funded by
ANR-15-CE40-0020-03 Grant LSD. We are grateful to Guy Barles and Vincent Calvez for help on the literature about Hamilton-Jacobi equations.


\bibliographystyle{plain}
\bibliography{biblio}

\end{document}